\newtheorem{definition}{\sc Definition}[section]
\newtheorem{theorem}[definition]{\sc Theorem}
\newtheorem{lemma}[definition]{\sc Lemma}
\newtheorem{proposition}[definition]{\sc Proposition}
\newtheorem{remark}[definition]{\sc Remark}
\renewcommand{\d}{\text{\rm d}}
\newcommand{\vep}{\varepsilon}
\newcommand{\vm}{|v|^{m-2}v}
\newcommand{\vmh}{|v|^{(m-2)/2}v}
\newcommand{\vnm}{|v_n|^{m-2}v_n}
\newcommand{\vnmh}{|v_n|^{(m-2)/2}v_n}
\newcommand{\sigr}{|\sigma|^{r-2}\sigma}
\newcommand{\sigm}{|\sigma|^{m-2}\sigma}
\newcommand{\sigmh}{|\sigma|^{(m-2)/2}\sigma}
\newcommand{\sigmcomp}{|\sigma|^{\frac{m(r-1)}2 - 1}\sigma}
\newcommand{\III}{\color{black}}
\newcommand{\EEE}{\color{black}}
\begin{document}


\title[Stability of non-isolated asymptotic profiles for fast
diffusion]{Stability of non-isolated asymptotic profiles\\for fast diffusion}

\author{Goro Akagi}
\address[Goro Akagi]{Graduate School of System Informatics, Kobe University,
1-1 Rokkodai-cho, Nada-ku, Kobe 657-8501 Japan}
\email{akagi@port.kobe-u.ac.jp}

\date{\today}

\keywords{Fast diffusion equation, stability analysis, non-isolated
stationary points, \L ojasiewicz-Simon inequality}

\begin{abstract}
The stability of asymptotic profiles of solutions to the
 Cauchy-Dirichlet problem for Fast Diffusion Equation (FDE, for short)
 is discussed. The main result of the present paper is the stability of
 any asymptotic profiles of least energy. It is noteworthy that this
 result can cover non-isolated profiles, e.g., those for thin annular
 domain cases. The method of proof is based on the \L ojasiewicz-Simon
 inequality, which is usually used to prove the convergence of solutions
 to prescribed limits, as well as a uniform extinction estimate for
 solutions to FDE. Besides, local minimizers of an energy functional
 associated with this issue are characterized. Furthermore, the
 instability of positive radial asymptotic profiles in thin annular
 domains is also proved by applying the \L ojasiewicz-Simon inequality in a different way.
\end{abstract}

\subjclass[2010]{\emph{Primary}: 35K67; \emph{Secondary}: 35B40, 35B35} 
\maketitle

\section{Introduction}

Let $\Omega$ be a bounded domain of $\mathbb R^N$ with smooth boundary
$\partial \Omega$. We are concerned with the Cauchy-Dirichlet problem
for Fast Diffusion Equation (shortly, FDE) of the form
\begin{alignat}{4}
\partial_t \left( |u|^{m-2}u \right) &= \Delta u
 \quad && \mbox{ in } \Omega \times (0, \infty),\label{eq:1.1}\\
 u &= 0 && \mbox{ on } \partial \Omega \times (0, \infty), \label{eq:1.2}\\
 u|_{t=0} &= u_0 && \mbox{ in } \Omega,\label{eq:1.3}
\end{alignat}
where $\partial_t = \partial/\partial t$, under the assumptions that
\begin{equation}\label{hypo}
u_0 \in H^1_0(\Omega), \quad 2 < m < 2^* := \dfrac{2N}{(N-2)_+}.
\end{equation}
FDE arises in plasma physics to describe anomalous diffusion of plasma
in a Tokamak, a toroidal device to confine plasma by imposing a magnetic
field (see~\cite{BH78,BH,BH82} and~\cite{Okuda-Dawson}). One of typical
features of solutions to \eqref{eq:1.1}--\eqref{eq:1.3} is the
extinction in finite time, namely, every solution vanishes at a finite time
(see~\cite{Sabinina62,BC,Diaz88,HerreroVazquez88}). Moreover, Berryman
and Holland~\cite{BH} determined the optimal extinction rate
of  solutions  $u = u(x,t)$ vanishing at a finite
time $t_* = t_*(u_0)$  under \eqref{hypo}. More precisely, it holds that
$$
c_1 (t_* - t)_+^{1/(m-2)} \leq \|u(\cdot,t)\|_{H^1_0(\Omega)} 
\leq c_2 (t_* - t)_+^{1/(m-2)}
\quad  \mbox{ for all } \ t \geq 0
$$
with $c_1, c_2 > 0$, provided that $u_0 \not\equiv 0$.
Here and henceforth, we write $\|u\|_{H^1_0(\Omega)} = \|\nabla
u\|_{L^2(\Omega)} = (\int_\Omega |\nabla u(x)|^2 \,\d x)^{1/2}$. 
Furthermore, they also proved the existence of \emph{asymptotic profiles} of
vanishing solutions, that is, a nonzero limit of the rescaled solution $(t_* -
t)^{-1/(m-2)} u(x,t)$ along a sequence $t_n \nearrow t_*$ (see
also~\cite{Kwong88-3,DKV91,SavareVespri}
and~\cite{BBDGV09,BDGV10,BGV11,BGV08,BV10}).

In order to characterize the asymptotic profile of $u(x,t)$, apply the
change of variable, 
$$
v(x,s) = (t_* - t)^{-1/(m-2)} u(x,t)
\quad \mbox{ with } \  s = \log (t_*/(t_* - t)).
$$
Then $v(x,s)$ solves
\begin{alignat}{4}
 \partial_s \left( |v|^{m-2}v \right) - \Delta v &= \lambda_m |v|^{m-2}v
\quad && \mbox{ in } \Omega \times (0, \infty),\label{eq:1.6}\\
 v &= 0 && \mbox{ on } \partial \Omega \times (0, \infty), \label{eq:1.7}\\
 v|_{s = 0} &= v_0 && \mbox{ in } \Omega\label{eq:1.8}
\end{alignat}
with $\lambda_m := (m-1)/(m-2) > 0$ and the initial data $v_0 :=
t_*(u_0)^{-1/(m-2)}u_0$.
Each asymptotic profile can be regarded as the limit of $v(x,s)$ along
a subsequence $s_n \nearrow \infty$; therefore, profiles are
characterized as nontrivial solutions to the stationary problem,
\begin{alignat}{4}
 - \Delta \phi &= \lambda_m |\phi|^{m-2}\phi \quad && \mbox{ in } \Omega,
\label{eq:1.10}\\
\phi &= 0 && \mbox{ on } \partial \Omega.
\label{eq:1.11}
\end{alignat}
On the other hand, each nontrivial solution $\phi(x)$ of \eqref{eq:1.10},
\eqref{eq:1.11} forms a separable solution $U(x,t) := (1 -
t)_+^{1/(m-2)} \phi(x)$ to \eqref{eq:1.1}--\eqref{eq:1.3}, and then,
$U(x,0) = \phi(x)$, $t_*(\phi) = 1$ and $\phi(x)$ is the asymptotic profile of
$U(x,t)$. Therefore the set of
all nontrivial solutions to \eqref{eq:1.10}, \eqref{eq:1.11} coincides
with the set of all asymptotic profiles for
\eqref{eq:1.1}--\eqref{eq:1.3}. From now on, we denote this set by
$\mathcal S$.

This paper addresses the \emph{stability of asymptotic profiles} for
FDE, that is, whether or not solutions of
\eqref{eq:1.1}--\eqref{eq:1.3} emanating from a small neighborhood (in
$H^1_0(\Omega)$) of
an asymptotic profile $\phi \in \mathcal S$ also have the same profile
$\phi$. Such a notion of stability has been formulated in~\cite{AK09} by
introducing a dynamical system generated by
\eqref{eq:1.6}--\eqref{eq:1.8} in a peculiar phase set
$$
\mathcal X := \{t_*(u_0)^{-1/(m-2)}u_0 \colon u_0 \in H^1_0(\Omega)
\setminus \{0\}\},
$$
which is equivalently rewritten by $\mathcal X = \{v_0 \in H^1_0(\Omega)
\colon t_*(v_0) = 1\}$ (hence $\mathcal S \subset \mathcal X$) and
homeomorphic to the unit sphere in $H^1_0(\Omega)$
(see~\cite[Propositions 6 and 10]{AK09}). More precisely, it is
defined as follows:
\begin{definition}[Stability and instability of asymptotic profiles~\cite{AK09}]\label{D:stbl}
Let $\phi \in \mathcal S$.
\begin{enumerate}
\item $\phi$ is said to be \emph{stable}, if
      for any $\varepsilon>0$ there exists $\delta > 0$ such
      that any solution $v$ of \eqref{eq:1.6}, \eqref{eq:1.7} satisfies
	       \begin{equation*}
		\sup_{s \in [0, \infty)}
		 \|v(\cdot,s) -\phi\|_{H^1_0(\Omega)} < \varepsilon,
	       \end{equation*}
	     whenever $v(\cdot,0) \in \mathcal X$ and $\|v(\cdot,0) -
      \phi\|_{H^1_0(\Omega)}<\delta$.
\item $\phi$ is said to be \emph{unstable}, if $\phi$ is not stable.
\item $\phi$ is said to be \emph{asymptotically stable}, if $\phi$ is stable,
      and moreover, there exists $\delta_0 > 0$
      such that any solution $v$ of \eqref{eq:1.6}, \eqref{eq:1.7}
      satisfies
      \begin{equation*}
       \lim_{s \nearrow \infty}\|v(\cdot,s) - \phi\|_{H^1_0(\Omega)} = 0,
      \end{equation*}
      whenever $v(\cdot,0) \in \mathcal X$ and $\|v(\cdot,0) - \phi\|_{H^1_0(\Omega)}<\delta_0$.
\end{enumerate}
\end{definition}
In~\cite{AK09}, some stability criteria are also established for
\emph{isolated} profiles (see Proposition \ref{P:criteria} below). Here
\emph{least energy solutions} to \eqref{eq:1.10}, \eqref{eq:1.11}
mean nontrivial solutions achieving the \emph{least energy}, that is,
the infimum over $\mathcal S$ of the \emph{energy} functional $J :
H^1_0(\Omega) \to \mathbb R$ defined by
$$
J(w) := \dfrac 1 2 \int_\Omega |\nabla w(x)|^2 \,\d x
 - \dfrac{\lambda_m} m \int_\Omega |w(x)|^m \,\d x
 \quad \mbox{ for } \ w \in H^1_0(\Omega)
$$
(see, e.g.,~\cite{Rabinowitz} for more details).
Least energy solutions of \eqref{eq:1.10}, \eqref{eq:1.11} turn out to be
sign-definite by strong maximum principle. We also note that $J$ is an action
functional associated with \eqref{eq:1.10}, \eqref{eq:1.11} and also a
Lyapunov functional for \eqref{eq:1.6}--\eqref{eq:1.8}.
\begin{proposition}[Stability criteria for isolated asymptotic profiles~\cite{AK09}]\label{P:criteria}
The following {\rm (i)} and {\rm (ii)} hold true\/{\rm :}
\begin{enumerate}
 \item Let $\phi$ be a least energy solution to \eqref{eq:1.10},
       \eqref{eq:1.11} which is isolated {\rm (}in $H^1_0(\Omega)${\rm )} from
       all the other least energy solutions. Then $\phi$ is stable {\rm
       (}in the sense of Definition \ref{D:stbl}{\rm )}. In addition, if
       $\phi$ is isolated from all the other sign-definite solutions,
       $\phi$ is asymptotically stable.
 \item Sign-changing solutions $\psi$ to \eqref{eq:1.10}, \eqref{eq:1.11} are
       not asymptotically stable. In addition, if $\psi$ is isolated from
       nontrivial solutions whose energies are lower than that of $\psi$, then
       $\psi$ is unstable.
\end{enumerate}
\end{proposition}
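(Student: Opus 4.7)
The plan is to build the entire argument around the fact that $J$ is a strict Lyapunov functional for the rescaled flow \eqref{eq:1.6}--\eqref{eq:1.8} (the energy identity $\tfrac{\d}{\d s} J(v(s)) = -(m-1)\int_\Omega |v|^{m-2} v_s^2 \, \d x$ is obtained by testing the equation with $v_s$), that the phase set $\mathcal X$ is forward invariant and uniformly bounded in $H^1_0(\Omega)$ (being homeomorphic to the unit sphere), and that every $\omega$-limit point of a trajectory lies in $\mathcal S$. Combining these with the compact embedding $H^1_0(\Omega) \hookrightarrow L^m(\Omega)$ coming from $m<2^*$, the first step is to establish the auxiliary inequality
\[
 J(w) \geq d := \inf_{\mathcal S} J = J(\phi), \qquad w \in \mathcal X,
\]
with equality precisely on the set $\mathcal L$ of least energy solutions: starting the flow at $w\in\mathcal X$, Lyapunov monotonicity combined with $J(\phi_\infty)\geq d$ for any $\omega$-limit $\phi_\infty \in \mathcal S$ forces $J(w)\geq d$, and strict decrease of $J$ off stationary points pins down the equality case.

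With this in hand, part (i) is proved by contradiction. If the isolated least energy solution $\phi$ were not stable, one would find $\varepsilon_0>0$, data $v_n(0)\in\mathcal X$ with $v_n(0)\to\phi$ in $H^1_0$, and first exit times $s_n$ with $\|v_n(s_n)-\phi\|_{H^1_0}=\varepsilon_0$. Lyapunov monotonicity and $J(v_n(0))\to d$ force $J(v_n(s_n))\to d$; the $H^1_0$-boundedness of $\mathcal X$ combined with the compact embedding produces a limit $w^*$ in $L^m$, and the joint use of weak lower semicontinuity of the Dirichlet integral, $J(w^*)\geq d$, and $J(v_n(s_n))\to d$ upgrades this to strong $H^1_0$-convergence, with $w^*\in\mathcal L$ and $\|w^*-\phi\|_{H^1_0}=\varepsilon_0$, contradicting isolation once $\varepsilon_0$ is small. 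Asymptotic stability follows by combining this stability with preservation of the sign of solutions (via the strong maximum principle applied on nondegenerate regularizations of FDE): any $\omega$-limit of a nearby trajectory is then a sign-definite least energy solution close to $\phi$, hence equal to $\phi$ by the second isolation hypothesis.

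For part (ii), the strategy is reversed: for sign-changing $\psi\in\mathcal S$, I would produce $v_n(0)\in\mathcal X$ with $v_n(0)\to\psi$ in $H^1_0$ and $J(v_n(0))<J(\psi)$. The key point is that the positive and negative parts $\psi_\pm$ of $\psi$ are both nontrivial and span a two-dimensional subspace on which $J''(\psi)$ is negative definite: their supports are disjoint so $J''(\psi)[\psi_+,\psi_-]=0$, while testing the stationary equation against $\psi_\pm$ yields $J''(\psi)[\psi_\pm,\psi_\pm]=-(m-2)\lambda_m\int|\psi_\pm|^m<0$. Intersecting this subspace with the tangent space to $\mathcal X$ at $\psi$ (of codimension one) produces a descent direction, which after retraction onto $\mathcal X$ yields the desired sequence. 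Lyapunov monotonicity then gives $J(v_n(s))<J(\psi)$ for every $s$, so $v_n(s)\not\to\psi$, ruling out asymptotic stability; under the additional isolation hypothesis, every $\omega$-limit of $v_n$ is a nontrivial stationary solution with energy strictly less than $J(\psi)$, hence bounded away from $\psi$ in $H^1_0$, so the trajectory eventually leaves a fixed neighborhood of $\psi$ and $\psi$ is unstable. The main technical obstacle will be precisely this last construction: producing perturbations that are tangent to $\mathcal X$ (whose defining functional $t_*$ is not a priori smooth) and strictly decrease $J$ requires an explicit description of the tangent space to $\mathcal X$ at a stationary point, together with a careful retraction argument onto $\mathcal X$.
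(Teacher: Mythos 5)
First, note that the paper does not prove Proposition \ref{P:criteria} at all --- it is imported from \cite{AK09} --- so your proposal can only be measured against the strategy of that reference, which it broadly matches: stability of an isolated least energy solution via energy monotonicity, the identity $\inf_{\mathcal X}J=\inf_{\mathcal S}J=d$, compactness of $H^1_0(\Omega)\hookrightarrow L^m(\Omega)$, and upgrading weak to strong convergence at the exit time; instability under the isolation hypothesis via $\omega$-limits of trajectories with energy below $J(\psi)$. Those two parts of your argument are essentially correct (modulo the right/weak-continuity technicalities in defining the exit time, which the paper handles carefully in \eqref{ass_cont}). One incidental error: $\mathcal X$ being homeomorphic to the unit sphere does \emph{not} make it bounded in $H^1_0(\Omega)$, and indeed it is not; the boundedness you need is for the specific trajectories, and it comes from Lemma \ref{L:est1} together with the boundedness of $J(v_n(0))$ and $R(v_n(0))$ for data near $\phi$.

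The genuine gaps are in the two places you lean on structure that is not available. (a) For part (ii), your construction of nearby data in $\mathcal X$ with $J<J(\psi)$ goes through ``the tangent space to $\mathcal X$ at $\psi$'' and a ``retraction onto $\mathcal X$''; but the paper explicitly states that even the smoothness of $\mathcal X$ is unknown, so there is no tangent space to intersect with, and the second-order negativity of $J''(\psi)$ on ${\rm span}\{\psi_+,\psi_-\}$ by itself does not survive an uncontrolled projection. The working construction (used in the proof of Proposition \ref{P:LM} and in \cite{AK09}) is explicit: rescale $\psi$ on one nodal domain to get $\phi_\mu$ with $\sup_{c>0}J(c\phi_\mu)<J(\psi)$ for $\mu\neq 1$, then project radially onto $\mathcal X$ by $w\mapsto t_*(w)^{-1/(m-2)}w$; since this point is of the form $cw$ with $c>0$, its energy is at most $\sup_{c>0}J(cw)<J(\psi)$, and continuity of $t_*$ gives convergence to $\psi$. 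No smoothness of $\mathcal X$ is needed. (b) For asymptotic stability in part (i), your claim that every $\omega$-limit of a nearby trajectory is a ``sign-definite least energy solution'' is unjustified: for fixed $\delta_0$ the limiting energy need not equal $d$, and sign preservation along the flow is irrelevant because an $H^1_0$-small perturbation of the positive $\phi$ need not be sign-definite. The correct mechanism is the energy-doubling property: a sign-changing solution $\psi$ satisfies $J(\psi)=J(\psi_+)+J(\psi_-)\geq 2d$ because each nodal part lies on the Nehari manifold, so trajectories starting with energy below $2d$ can only have sign-definite stationary $\omega$-limits, to which the second isolation hypothesis then applies; one still has to say a word about passing from subsequential to full convergence.
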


In~\cite[\S 4]{AK09}, it is proved that $\mathcal X$ forms a separatrix of the
dynamical system generated by \eqref{eq:1.6}--\eqref{eq:1.8} in the
whole of the energy space $H^1_0(\Omega)$ to divide its stable and
unstable sets. Moreover, it is also pointed out that $\mathcal X$ is
different from the so-called \emph{Nehari manifold} $\mathcal N := \{w
\in H^1_0(\Omega) \setminus \{0\} \colon \langle J'(w), w \rangle_{H^1_0(\Omega)} = 0\}$
and $\mathcal X \cap \mathcal N = \mathcal S$.

However, these stability criteria can not cover all situations. For
instance, in a thin annular domain case, it is known that least
energy solutions form a continuum in $H^1_0(\Omega)$ due to the symmetry
breaking of least energy solutions (see Coffman~\cite{Coffman} and
also~\cite{Li,Byeon}) and the invariance of the equation to rotations.
So one cannot apply Proposition \ref{P:criteria} to determine the
stability of such non-isolated least energy solutions to \eqref{eq:1.10},
\eqref{eq:1.11} in the sense of Definition \ref{D:stbl}
(cf.~see~\cite{INdAM13}). On the other hand, obviously, they are never
asymptotically stable.
 
The main purpose of the present paper is to prove the stability of
all (possibly non-isolated) asymptotic profiles of least energy. 
A main difficulty apparently stems from the lack of solitary of
asymptotic profiles. Behaviors of orbits near non-isolated stationary
points are treated in the study of dynamical systems, e.g., the center
manifold theory. In the current issue, the phase set $\mathcal X$ plays a crucial role to stabilize asymptotic profiles of least energy;
indeed, if one assigns the usual energy space $H^1_0(\Omega)$ as the
phase set instead of $\mathcal X$, all nontrivial stationary points of
the dynamical system generated by \eqref{eq:1.6}--\eqref{eq:1.8} are
saddle points of the Lyapunov energy $J(\cdot)$ and turn out to be
unstable. However, there are many unknown points regarding the
phase set $\mathcal X$, e.g., even the smoothness of $\mathcal X$ is
still unclear. So it seems difficult to directly apply the standard
approach to the dynamical system on $\mathcal X$. To overcome such a
difficulty, we shall turn our attention to the so-called \emph{\L
ojasiewicz-Simon inequality} (see~\cite{FeiSim00}), which is used to
investigate the convergence of solutions to non-isolated stationary
solutions for strongly nonlinear evolution equations including
degenerate and singular parabolic equations.

The main result of the present paper is stated as follows:
\begin{theorem}[Stability of asymptotic profiles of least energy]\label{T:MR1}
 Let $\phi > 0$ be a least energy solution of \eqref{eq:1.10},
 \eqref{eq:1.11}. Then $\phi$ is stable under the
 flow on $\mathcal X$ generated by solutions for
 \eqref{eq:1.6}--\eqref{eq:1.8} {\rm (}that is, $\phi$ is a stable asymptotic profile for FDE in the sense of Definition \ref{D:stbl}{\rm )}.
\end{theorem}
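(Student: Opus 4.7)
The plan is to argue by contradiction, combining the Lyapunov structure of \eqref{eq:1.6}--\eqref{eq:1.8} with the \L ojasiewicz--Simon inequality applied at $\phi$. First I would record that testing \eqref{eq:1.6} by $\partial_s v$ gives the energy identity
\begin{equation*}
\frac{\d}{\d s} J(v(s)) = -(m-1) \int_\Omega |v(s)|^{m-2} |\partial_s v(s)|^2 \,\d x \le 0,
\end{equation*}
so $J$ is a strict Lyapunov functional along the rescaled flow, and moreover $J'(v) = -(m-1)\,|v|^{m-2}\partial_s v$ in $H^{-1}(\Omega)$. Next I would invoke the \L ojasiewicz--Simon inequality for the real-analytic functional $J$ at its critical point $\phi$: there exist $\theta\in(0,1/2]$ and $r,C>0$ such that
\begin{equation*}
|J(w) - J(\phi)|^{1-\theta} \le C\,\|J'(w)\|_{H^{-1}(\Omega)}
\quad\text{whenever } \|w-\phi\|_{H^1_0(\Omega)}\le r.
\end{equation*}

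Second, since $\phi$ is a positive least energy solution I would separately establish that $\phi$ is a local minimizer of $J$ on $\mathcal X$, i.e.\ $J(w)\ge J(\phi)$ for every $w\in\mathcal X$ with $\|w-\phi\|_{H^1_0(\Omega)}\le\rho$, using the fact that on the slice $\mathcal X$ the quantity $t_*(\cdot)$ is constant equal to $1$ and that least energy solutions minimize $J$ among nontrivial stationary points. Combined with $\mathcal X$ being forward invariant under the flow, this guarantees $J(v_n(s)) \ge J(\phi)$ as long as the orbit remains in a small $H^1_0$-neighborhood of $\phi$.

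Third, I would carry out the contradiction. Assuming $\phi$ is unstable, choose $\varepsilon_0\in(0,\min\{r,\rho\})$ and a sequence of initial data $v_{0,n}\in\mathcal X$ with $\|v_{0,n}-\phi\|_{H^1_0(\Omega)}\to 0$ together with first exit times $s_n<\infty$ for which $\|v_n(s)-\phi\|_{H^1_0(\Omega)}\le \varepsilon_0$ on $[0,s_n]$ and equality at $s=s_n$. On that interval the standard chain-rule manipulation using the \L ojasiewicz--Simon inequality and the energy identity yields
\begin{equation*}
-\frac{\d}{\d s}\bigl[J(v_n(s)) - J(\phi)\bigr]^{\theta}
\ \ge\ c\,\|\partial_s v_n(s)\|_{\star},
\end{equation*}
in an appropriate norm $\|\cdot\|_\star$ controlling $\|\partial_s v_n\|_{H^1_0(\Omega)}$ up to a constant. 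Integrating from $0$ to $s_n$ gives
\begin{equation*}
\int_0^{s_n}\|\partial_s v_n(s)\|_\star\,\d s
\ \le\ C\bigl[J(v_{0,n}) - J(\phi)\bigr]^{\theta} \ \longrightarrow\ 0
\quad (n\to\infty),
\end{equation*}
by continuity of $J$ at $\phi$. Triangle inequality then gives $\|v_n(s_n)-\phi\|_{H^1_0(\Omega)}\to 0$, contradicting $\|v_n(s_n)-\phi\|_{H^1_0(\Omega)}=\varepsilon_0>0$.

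The main obstacle is the step producing the differential inequality with an honest $H^1_0$-valued time-derivative on the right-hand side: the energy identity only provides control of $\partial_s v$ in the $|v|^{m-2}$-weighted $L^2$-norm, which degenerates near $\partial\Omega$ (and could degenerate in the interior if $v$ vanished there). Converting this weighted control into a genuine $H^1_0$ arc-length bound is the technical core and requires (a) a uniform extinction / comparison estimate ensuring that $v_n(\cdot,s)$ stays uniformly trapped between two positive multiples of a reference profile comparable to $\phi$, and (b) a duality argument matching $\|J'(v)\|_{H^{-1}(\Omega)}=(m-1)\||v|^{m-2}\partial_s v\|_{H^{-1}(\Omega)}$ with the weighted dissipation, so that the \L ojasiewicz--Simon exponent can be leveraged to close the integration above.
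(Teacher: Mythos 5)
Your overall architecture (contradiction via first exit times, Lyapunov energy, \L ojasiewicz--Simon at $\phi$, integration of a differential inequality for $(J(v)-J(\phi))^\theta$) is exactly the paper's, and your identification of local minimality of $J$ over $\mathcal X$ as the source of the sign $J(v_n(s))\ge J(\phi)$ is also correct. However, the step you yourself flag as the ``technical core'' is resolved incorrectly, and as stated your argument would fail there. You aim for an arc-length bound on $\|\partial_s v_n\|_{H^1_0(\Omega)}$ (or a norm controlling it) so that a triangle inequality closes the contradiction in $H^1_0(\Omega)$. No such bound is available: the dissipation controls $\partial_s\bigl(|v|^{(m-2)/2}v\bigr)$ in $L^2$, and the quantity that matches $\|J'(v)\|_{H^{-1}(\Omega)}$ through the gradient-flow structure is $\partial_s\bigl(|v|^{m-2}v\bigr)$ in $H^{-1}(\Omega)$. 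The paper's differential inequality therefore reads
\begin{equation*}
-\frac{\d}{\d s}\bigl(J(v(s))-J(\phi)\bigr)^{\theta}\ \ge\ c\,\bigl\|\partial_s\bigl(|v|^{m-2}v\bigr)(s)\bigr\|_{H^{-1}(\Omega)},
\end{equation*}
and integration yields only $\||v_n|^{m-2}v_n(s_n)-\phi^{m-1}\|_{H^{-1}(\Omega)}\to 0$. This has to be upgraded in a separate, nontrivial step: Tartar's inequality gives $v_n(s_n)\to\phi$ in $L^m(\Omega)$, and then the monotonicity of $J$ along the flow combined with the weak convergence and the uniform convexity of $H^1_0(\Omega)$ gives the strong $H^1_0$-convergence that contradicts $\|v_n(s_n)-\phi\|_{H^1_0(\Omega)}=\varepsilon_1$. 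Your direct triangle-inequality conclusion in $H^1_0$ skips this entire mechanism.

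Two further points. First, $J$ is not real-analytic for non-integer $m$, so the version of the \L ojasiewicz--Simon inequality actually available (Feireisl--Simondon) requires both the strict positivity of $\phi$ with $\partial_\nu\phi<0$ and a uniform bound $|w|\le L$ a.e.; the needed input is a uniform \emph{upper} $L^\infty$ bound for the (possibly sign-changing) rescaled solutions, obtained by comparison with a positive solution on a larger domain and the DiBenedetto--Kwong local estimates --- not a two-sided trapping between positive multiples of a reference profile, which is neither available nor needed since $v_n$ need not stay positive. Second, that $L^\infty$ bound only holds for $s\ge s_0>0$, so you cannot integrate from $s=0$: you must show (via continuous dependence on the data in $H^{-1}$ and an energy argument) that the exit times satisfy $s_n\to\infty$, hence eventually $s_n>s_0$, integrate on $[s_0,s_n]$, and control the leftover term $\||v_n|^{m-2}v_n(s_0)-\phi^{m-1}\|_{H^{-1}(\Omega)}$ separately. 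You also need the strict inequality $J(v_n(s))>J(\phi)$ on $[0,s_n)$ (not just $\ge$) to differentiate $(J-J(\phi))^\theta$, which requires its own short argument.
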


Here we remark that every least energy solution of \eqref{eq:1.10},
\eqref{eq:1.11} is sign-definite by strong maximum principle. Hence one can
assume the positivity of $\phi$ in $\Omega$ without any loss of
generality.

As mentioned above, our proof of Theorem \ref{T:MR1} will rely on the \L
ojasiewicz-Simon inequality (see~\cite{FeiSim00}).
The \L ojasiewicz-Simon inequality has been vigorously studied so far, and it is usually employed to prove the convergence of each solution for
nonlinear parabolic (and also damped wave) equations to a prescribed
(possibly non-isolated) stationary solution as $t \to \infty$ (and hence, the
$\omega$-limit set of each evolutionary solution turns out to be
singleton). More precisely, let $E : X \to \mathbb R$ be a ``smooth''
functional defined on a Banach space $X$ and let $\psi$ be a critical
point of $E$, i.e., $E'(\psi) = 0$ in the
dual space $X^*$, where $E' : X \to X^*$ denotes the Fr\'echet
derivative of $E$. Then an abstract form of the \L ojasiewicz-Simon
inequality is as follows (see,
e.g.,~\cite{Simon83,Jen98,HJ98,Har00,FeiSim00,HJ01,HJK03,Chill03,CHJ09,HJ11}):
there exist constants $\theta \in (0,1/2]$ and $\omega,\delta >0$ such that
$$
|E(v)-E(\psi)|^{1-\theta} \leq \omega \|E'(v)\|_{X^*} \quad \mbox{ for all
} v \in X \ \mbox{ satisfying } \ \|v-\psi\|_X < \delta
$$
(cf.~there are several variants with different choices of norms).
Here the constants $\theta$, $\omega$, $\delta$ may depend on the
choice of each critical point $\psi$ of the functional $E$. To
prove the convergence of a flow of a dissipative dynamical system along
with $E(\cdot)$ as a Lyapunov energy to a prescribed limit $\phi$, one
assigns $\phi$ to the critical point $\psi$ of the \L ojasiewicz-Simon
inequality, and then investigates the behavior of the flow for
sufficiently large time.
By contrast, to discuss the (Lyapunov) stability of a stationary point $\phi$ of
the system, the limit of each flow (emanating from a
neighborhood of $\phi$) is not prescribed. Here we focus on
the behavior of the flow near the initial time by assigning the target of
stability analysis (i.e., $\phi$) to
the critical point $\psi$ of the \L ojasiewicz-Simon inequality.

However, another difficulty then arises from the frame of stability
analysis. More precisely, in Definition \ref{D:stbl}, the notions of
stability are  formulated in the energy space $H^1_0(\Omega)$, whose
elements may not be uniformly bounded in $\Omega$. On the other hand,
due to the nonlinearity of FDE (see, e.g., Lemma \ref{L:1}), uniform estimates for solutions of
\eqref{eq:1.6}--\eqref{eq:1.8} will be required to investigate the
stability by using the \L ojasiewicz-Simon inequality, which is also
established in~\cite{FeiSim00} for \emph{uniformly bounded} functions in a small
neighborhood of each solution $\phi$ of \eqref{eq:1.10}, \eqref{eq:1.11}
with non-integer power $m > 1$. Therefore we need to
compensate the gap between the frame of stability analysis and
the validity of the argument based on the \L ojasiewicz-Simon
inequality. To this end, we shall develop a uniform extinction estimate for (possibly sign-changing) solutions of FDE by utilizing some results of~\cite{DKV91} and~\cite{DK92}.

Moreover, the \L ojasiewicz-Simon inequality will be also applied to
prove the \emph{instability} of asymptotic profiles for FDE. Let us consider
the annular domain
$$
A_N(a,b) := \left\{
x \in \mathbb R^N \colon a < |x| < b
\right\}
$$ 
with $0 < a < b < \infty$. As mentioned above, the positive radial
asymptotic profile for FDE does not take the least energy, provided that
the thickness $(b-a)/a$ of the annulus is sufficiently thin; thereby it is
beyond the scope of Proposition \ref{P:criteria}. One may
expect that the positive radial profile is \emph{unstable} (i.e., not
stable) in the sense of Definition \ref{D:stbl}. This conjecture was proved
only for the two dimensional case, $N = 2$, without providing any
quantitative information of the thickness of the annulus in~\cite{AK12}, where
the restriction on the space dimension $N$ and the lack of quantitative
information of the thickness arise from some technical difficulty of
spectral analysis of the corresponding linearized operator. The general
$N$-dimensional case has been left as an open question (cf.~it was
proved for general $N$ in~\cite{AK12} that the positive radial profile is
not asymptotically stable). In this paper, we shall also prove the
instability of the positive radial profile for general spacial dimension
$N$ and give an upper bound of the thickness of the annulus by applying
the \L ojasiewicz-Simon inequality.

\begin{theorem}[Instability of positive radial asymptotic profiles in
 thin annuli]\label{T:inst}
 Let $\Omega = A_N(a,b)$ and assume
\begin{equation}\label{hypo:inst}
\left( \frac{b}{a} \right)^{(N-3)_+} \left(\frac{b-a}{\pi a}\right)^2 
   < \frac{m-2}{N-1}. 
\end{equation}
Then the positive radial solution $\phi$ of \eqref{eq:1.10}, \eqref{eq:1.11} is unstable in the sense of Definition
 \ref{D:stbl}.  
\end{theorem}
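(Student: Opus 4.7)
The plan is to argue by contradiction. Assuming $\phi$ is stable, I would construct initial data $v_\varepsilon \in \mathcal X$ arbitrarily close to $\phi$ with $J(v_\varepsilon) < J(\phi)$; stability would then trap the orbit emanating from $v_\varepsilon$ inside a small $H^1_0$-neighborhood of $\phi$, and the \L ojasiewicz--Simon inequality (available from the apparatus developed for Theorem~\ref{T:MR1}) would force the orbit to converge to a critical point whose energy must simultaneously equal $J(\phi)$ and be strictly smaller than $J(\phi)$.

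The key computation is to show that hypothesis \eqref{hypo:inst} makes $J''(\phi)$ negative along a first-angular-mode direction. Let $Y$ be a first spherical harmonic on $S^{N-1}$ (normalised so that $-\Delta_{S^{N-1}}Y=(N-1)Y$, $\int_{S^{N-1}} Y^2 \,\d\sigma = 1$ and $\int_{S^{N-1}} Y\,\d\sigma=0$) and set $\psi(x)=\phi(|x|)Y(x/|x|)$. A computation in polar coordinates, combined with the Nehari identity $\int_a^b(\phi')^2 r^{N-1}\,\d r = \lambda_m \int_a^b \phi^m r^{N-1}\,\d r$ and the relation $(m-1)=\lambda_m(m-2)$, gives
\[
J''(\phi)[\psi,\psi] = (N-1)\!\int_a^b\!\phi^2\, r^{N-3}\,\d r - (m-2)\!\int_a^b\!(\phi')^2\, r^{N-1}\,\d r.
\]
Since $\phi(a)=\phi(b)=0$, the one-dimensional Poincar\'e inequality $\int_a^b\phi^2\,\d r \leq ((b-a)/\pi)^2\int_a^b(\phi')^2\,\d r$ together with the pointwise estimates $r^{N-3}\leq a^{N-3}(b/a)^{(N-3)_+}$ and $r^{N-1}\geq a^{N-1}$ on $[a,b]$ turns \eqref{hypo:inst} into $J''(\phi)[\psi,\psi]<0$. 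Taking $Y$ to be an odd spherical harmonic (e.g., $Y(x) = c\, x_1/|x|$), the reflection $x_1\mapsto -x_1$ makes $t_*(\phi+\varepsilon\psi)$ an even function of $\varepsilon$; rescaling $v_\varepsilon := t_*(\phi+\varepsilon\psi)^{-1/(m-2)}(\phi+\varepsilon\psi)$ therefore produces a smooth curve in $\mathcal X$ (using the differentiability of $t_*$ near $\phi$ from~\cite{AK09}) with $v_0=\phi$ and $v_\varepsilon = \phi + \varepsilon\psi + O(\varepsilon^2)$, so that $J(v_\varepsilon)-J(\phi) = \tfrac{\varepsilon^2}{2}J''(\phi)[\psi,\psi] + O(\varepsilon^3) < 0$ for all sufficiently small $\varepsilon \neq 0$.

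Now suppose, for contradiction, that $\phi$ is stable. The \L ojasiewicz--Simon inequality established in the paper for the proof of Theorem~\ref{T:MR1} provides constants $\theta\in(0,1/2]$ and $\omega,\delta,M>0$ such that
\[
|J(w)-J(\phi)|^{1-\theta} \leq \omega \,\|J'(w)\|_{H^{-1}(\Omega)} \quad \text{for all } w\in H^1_0(\Omega) \text{ with } \|w\|_{L^\infty(\Omega)}\leq M \text{ and } \|w-\phi\|_{H^1_0(\Omega)}<\delta.
\]
In particular, every critical point of $J$ in this ball has energy equal to $J(\phi)$. For $\varepsilon$ small enough, stability confines the orbit $v(s)$ of \eqref{eq:1.6}--\eqref{eq:1.8} starting from $v_\varepsilon$ to $B_\delta(\phi)$ for all $s\geq 0$, while the uniform extinction estimate of the paper yields $\|v(s)\|_{L^\infty(\Omega)}\leq M$ uniformly in $s$. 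The usual \L ojasiewicz--Simon gradient-flow computation (based on $-\tfrac{d}{ds}J(v) = (m-1)\int_\Omega |v|^{m-2}v_s^2\,\d x$ together with the uniform $L^\infty$-bound, which lets one control $\|J'(v)\|_{H^{-1}(\Omega)}$ by the dissipation) then gives $\int_0^\infty\|v_s\|_{L^2(\Omega)}\,\d s<\infty$ and hence $v(s)\to v_\infty$ in $H^1_0(\Omega)$, where $v_\infty$ is a critical point of $J$ lying in $\overline{B_\delta(\phi)}$. Energy monotonicity forces $J(v_\infty) \leq J(v_\varepsilon)<J(\phi)$, while the \L ojasiewicz--Simon inequality forces $J(v_\infty)=J(\phi)$---a contradiction. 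Hence $\phi$ is unstable.

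The main obstacle is the justification of the second-order expansion of $J(v_\varepsilon)-J(\phi)$ on $\mathcal X$: since $\mathcal X$ is defined implicitly through the extinction time $t_*$, one needs the smoothness of $t_*$ in a neighborhood of $\phi$ and the reflection-symmetry argument above to push the scaling correction to order $O(\varepsilon^2)$, so that the negative contribution from $J''(\phi)[\psi,\psi]$ in the first angular mode actually dominates. Once this expansion is in place, the remaining ingredients---the \L ojasiewicz--Simon inequality and the uniform $L^\infty$-extinction bound for solutions starting near $\phi$---are available directly from the machinery developed for Theorem~\ref{T:MR1}, and the contradiction argument outlined above concludes the proof.
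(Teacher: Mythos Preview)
Your overall strategy---produce $v_\varepsilon\in\mathcal X$ near $\phi$ with $J(v_\varepsilon)<J(\phi)$ and then use the \L ojasiewicz--Simon inequality to rule out nearby critical points of lower energy---coincides with the paper's, and your computation of $J''(\phi)[\psi,\psi]<0$ under \eqref{hypo:inst} is correct (the paper simply cites \cite{AK12} for this step).

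The genuine gap is the claimed differentiability of $t_*$ near $\phi$: this is \emph{not} established in \cite{AK09}, and the present paper explicitly says that even the smoothness of $\mathcal X$ is unclear. Continuity plus evenness of $\varepsilon\mapsto t_*(\phi+\varepsilon\psi)$ does not give $t_*(\phi+\varepsilon\psi)=1+O(\varepsilon^2)$, so your second-order expansion of $J(v_\varepsilon)$ is unjustified as written. A clean repair avoids $t_*$ altogether: since $c\mapsto J(cw)$ attains its maximum at the Nehari scale $c=n(w)$, one has $J(v_\varepsilon)\le J(n(w_\varepsilon)w_\varepsilon)$ with $w_\varepsilon=\phi+\varepsilon\psi$; the explicit formula $n(w_\varepsilon)=\big(\|\nabla w_\varepsilon\|_{L^2}^2/(\lambda_m\|w_\varepsilon\|_{L^m}^m)\big)^{1/(m-2)}$ is smooth and even in $\varepsilon$, so $n(w_\varepsilon)=1+O(\varepsilon^2)$ and the expansion applies to the right-hand side, yielding $J(v_\varepsilon)\le J(\phi)+\tfrac{\varepsilon^2}{2}J''(\phi)[\psi,\psi]+O(\varepsilon^3)<J(\phi)$.

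Your contradiction step via full convergence of the orbit is valid (the \L ojasiewicz--Simon computation still runs with $H(s)=(J(\phi)-J(v(s)))^\theta$, which is now increasing but bounded above since $J$ is bounded below along orbits in $\mathcal X$; what becomes integrable is $\|\partial_s(|v|^{(m-2)/2}v)\|_{L^2}$ rather than $\|v_s\|_{L^2}$), but it is heavier than what the paper does. In Theorem~\ref{T:inst-nlm} the paper applies Lemma~\ref{L:LS} only to the critical $\omega$-limit points $\psi_n\in\mathcal S$ of the orbits from $v_{0,n}$: if $\psi_n\to\phi$ in $H^1_0(\Omega)$, elliptic regularity upgrades this to $C^2(\overline\Omega)$ (supplying the required $L^\infty$ bound without any extinction estimate), and then $J'(\psi_n)=0$ forces $J(\psi_n)=J(\phi)$, contradicting $J(\psi_n)\le J(v_{0,n})<J(\phi)$. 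Hence $(\psi_n)$ stays bounded away from $\phi$, and instability follows directly---no contradiction with an assumed stability is needed.
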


This paper consists of five sections: In Section \ref{S:P}, we prepare several lemmas to be used in a proof of Theorem \ref{T:MR1}. Section \ref{S:M} is
devoted to proving Theorem \ref{T:MR1}. More precisely, we shall prove
the stability for all \emph{local minimizers of $J$ over $\mathcal X$}
(see \eqref{locmin} below for definition). Since every asymptotic
profile of least energy is a (global) minimizer of $J$ over $\mathcal
X$, Theorem \ref{T:MR1} will be also obtained as a special case. In
Section \ref{S:lm}, we discuss a couple of properties of local minimizers of $J$
over $\mathcal X$. In particular, we investigate the relation of (local)
minimizers of $J$ over $\mathcal X$ and those over the so-called \emph{Nehari
manifold} $\mathcal N$, which has been vigorously studied in variational
analysis of nonlinear elliptic equations. The final section is concerned
with the instability of positive radial asymptotic profiles in thin
annular domains.

\bigskip 

\noindent
{\bf Notation.} 
Let $u = u(x,t): \Omega \times [0,\infty) \to \mathbb R$ be a function with
space and time variables. Throughout the paper, for each $t \geq 0$ fixed,
we simply denote by $u(t)$ the function $u(\cdot,t) : \Omega \to \mathbb
R$ with only the space variable. We denote by $H^{-1}(\Omega)$ the
dual space of $H^1_0(\Omega)$. Moreover, $B_{H^1_0(\Omega)}(\phi ;
r)$ denotes the open ball in $H^1_0(\Omega)$ with radius $r > 0$
centered at $\phi$, i.e.,
$$
B_{H^1_0(\Omega)}(\phi ; r) := \left\{
w \in H^1_0(\Omega) \colon \|w - \phi\|_{H^1_0(\Omega)} < r
\right\} \quad \mbox{ for } \ r > 0.
$$
Furthermore, $C_m$ and $R(\cdot)$ stand for the best possible constant
of the Sobolev-Poincar\'e inequality \eqref{eq:1.12} below and the
corresponding Rayleigh quotient, respectively (see \S \ref{Ss:Prelim} for more details). For $T > 0$, $C_w([0,T];X)$ and $C_+([0,T];X)$ stand for the sets of weakly- and right- continuous functions on $[0,T]$ with values in a normed space $X$, respectively.

\section{Preliminaries and Lemmas}\label{S:P}

In this section, we collect preliminary facts and several lemmas.

\subsection{Preliminaries}\label{Ss:Prelim}
Let us start with recalling the definition of solutions.
\begin{definition}\label{D:sol}
 A function $u : \Omega \times (0,\infty) \to \mathbb R$ is said to be a
 \emph{solution} of \eqref{eq:1.1}--\eqref{eq:1.3}, if the following
 conditions hold true\/{\rm :}
\begin{itemize}
 \item \III $u \in L^\infty(0,T;H^1_0(\Omega))$ and 
       $|u|^{m-2}u \in W^{1,\infty}(0,T;H^{-1}(\Omega))$
       for any $T > 0$. \EEE 
 \item It holds that
       \begin{align*}
  \left\langle \partial_t \left(|u|^{m-2}u\right)(t), \phi \right\rangle_{H^1_0}
   + \int_\Omega \nabla u(x,t) \cdot \nabla \phi(x) \,\d x
   = 0\\
	\nonumber
	 \qquad \mbox{ for a.e.~} \ t \in (0, \infty)  
	 \mbox{ and } \ \phi \in H^1_0(\Omega),
       \end{align*}
       where $\langle \cdot, \cdot \rangle_{H^1_0}$ denotes the duality
       pairing between $H^1_0(\Omega)$ and its dual space
       $H^{-1}(\Omega)$.
 \item $u(\cdot, 0) = u_0$ a.e.~in $\Omega$.
\end{itemize}
\end{definition}
Solutions of \eqref{eq:1.6}--\eqref{eq:1.8} are also defined in an
analogous manner. The well-posedness of \eqref{eq:1.1}--\eqref{eq:1.3} in
the sense of Definition \ref{D:sol} is well known (see,
e.g.,~\cite{HB3},~\cite{Vazquez}). Hence the extinction time $t_* =
t_*(u_0)$ is uniquely determined for each initial data $u_0$. \III Moreover, one can also ensure that
\begin{align}
u \in C([0,T];L^m(\Omega)) \cap C_w([0,T];H^1_0(\Omega)) \cap C_+([0,T];H^1_0(\Omega)) \quad \mbox{ for any } \ T > 0, \label{apdx1}\\
 \partial_t (|u|^{m-2}u) \in C_+([0,T];H^{-1}(\Omega)) \quad \mbox{ for any } \ T > 0\label{apdx2}
\end{align}
(see Appendix for more details). \EEE 

Equation \eqref{eq:1.6} can be formulated as a generalized gradient flow
in $H^{-1}(\Omega)$ of the form,
\begin{equation}\label{ggf}
 \partial_s \left(|v|^{m-2}v\right)(s) = - J'(v(s)) \ \mbox{ in }
  H^{-1}(\Omega), \quad s > 0,
\end{equation}
where $J'$ stands for the Fr\'echet derivative of the energy functional
$J : H^1_0(\Omega) \to \mathbb R$.
Therefore the following energy inequalities hold true:
\begin{align}
 \frac{1}{m'} \frac{\d}{\d s} \|v(s)\|_{L^m(\Omega)}^m +
 \|v(s)\|_{H^1_0(\Omega)}^2 = \lambda_m \|v(s)\|_{L^m(\Omega)}^m,
  \label{eq:3.1}\\
 \mu_m \left\|
      \partial_s \left( |v|^{(m-2)/2}v \right) (s)
     \right\|_{L^2(\Omega)}^2
 + \frac \d{\d s} J(v(s)) \leq 0
\quad \mbox{ for a.e. } \, s > 0,
 \label{eq:3.2}
\end{align}
where $m'$ is the H\"older conjugate of $m$, i.e., $m' := m/(m-1)$ and
$\mu_m := 4/(mm') > 0$ (see, e.g.,~\cite{G:EnSol} for the
precise derivation of these energy inequalities). In particular, $s
\mapsto J(v(s))$ is non-increasing.

Define a Rayleigh quotient by
$$
R(w) := \frac{\|w\|_{H^1_0(\Omega)}}{\|w\|_{L^m(\Omega)}} \quad \mbox{
for } \ w \in H^1_0(\Omega) \setminus \{0\},
$$
associated with the Sobolev-Poincar\'e inequality
\begin{equation}\label{eq:1.12}
\|w\|_{L^m(\Omega)} \leq C_m \|w\|_{H^1_0(\Omega)} \quad \mbox{for }  w \in H^1_0(\Omega),
\end{equation}
provided that $m \in [1,2^*]$, with the best possible constant $C_m$
which is the supremum of $R(w)^{-1}$ over $w \in H^1_0(\Omega) \setminus
\{0\}$. Then the function $t \mapsto R(u(t))$ is non-increasing, and
hence, so is the function $s \mapsto R(v(s))$ (see, e.g.,~\cite{BH,
Kwong88-3, SavareVespri, AK09}).

Finally, we list up properties of the phase set $\mathcal X$ obtained
in~\cite{AK09} for later use.
\begin{proposition}[Properties of phase sets, cf.~\cite{AK09}]\label{P:X}
The phase set $\mathcal X$ satisfies the following properties\/{\rm :}
\begin{enumerate}
 \item If $v_0 \in \mathcal X$, then $v(s)$ lies on $\mathcal X$ for any
       $s \geq 0$.
 \item If $v_0 \in \mathcal X$, then there exist $\psi \in \mathcal S$
       and a sequence $s_n \to \infty$ such that $v(s_n) \to \psi$
       strongly in $H^1_0(\Omega)$.
 \item The set $\mathcal S$ is included in $\mathcal X$.
 \item The infimum of $J$ over $\mathcal X$ coincides with the least
       energy, i.e., the infimum of $J$ over $\mathcal S$. Moreover, if
       $w \in \mathcal X$ achieves the infimum, then $w$ is a least
       energy solution of \eqref{eq:1.10}, \eqref{eq:1.11}.
 \item For any $w \in \mathcal X$, it holds true that $t_*(w) = 1$.
 \item The set $\mathcal X$ is sequentially closed in the weak topology
       of $H^1_0(\Omega)$.
\end{enumerate} 
\end{proposition}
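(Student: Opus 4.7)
My plan is to exploit two structural features of the problem: the exact scaling invariance of \eqref{eq:1.1} (which underlies (i), (iii), (v)), and the Lyapunov structure of the rescaled equation \eqref{eq:1.6} combined with the Berryman-Holland extinction estimates (which underlie (ii), (iv), and ultimately (vi)).

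I would first prove (v), since (i) and (iii) then follow quickly. The FDE \eqref{eq:1.1} is invariant under the rescaling $u(x,t) \mapsto \alpha u(x, \alpha^{2-m} t)$, so the extinction time transforms as $t_*(\alpha u_0) = \alpha^{m-2}\, t_*(u_0)$. Plugging $\alpha = t_*(u_0)^{-1/(m-2)}$ immediately yields $t_*(w) = 1$ for $w = \alpha u_0$, which is (v). For (i), time translation is also a symmetry of the FDE, so the underlying FDE solution $u(\cdot, t_0)$ corresponding to $v(\cdot, s_0)$ has extinction time $t_* - t_0$ and $v(\cdot, s_0) = (t_* - t_0)^{-1/(m-2)}\, u(\cdot, t_0)$; applying (v) to $u(\cdot, t_0)$ then gives $t_*(v(\cdot, s_0)) = 1$. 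For (iii), each $\phi \in \mathcal S$ generates the separable solution $U(x,t) = (1-t)_+^{1/(m-2)}\phi(x)$ with extinction time exactly $1$, so $\phi \in \mathcal X$.

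To establish (ii), I would run a standard Lyapunov argument on the rescaled flow. Integrating \eqref{eq:3.2} in $s$ yields $\int_0^\infty \|\partial_s(|v|^{(m-2)/2} v)\|_{L^2(\Omega)}^2\, \d s < \infty$, so along some $s_n \nearrow \infty$ the time-derivative term vanishes. The Berryman-Holland two-sided extinction estimate combined with (i) (which freezes $t_*(v(s)) \equiv 1$) provides a two-sided bound on $\|v(s)\|_{H^1_0(\Omega)}$. Extracting a weak limit $\psi$ and passing to the limit in the weak form of \eqref{eq:1.6} identifies $\psi$ as a nontrivial element of $\mathcal S$; strong $H^1_0$-convergence then follows from the compact embedding $H^1_0(\Omega) \hookrightarrow L^m(\Omega)$, the monotone convergence of $J(v(s_n))$, and the identity $\|v\|_{H^1_0(\Omega)}^2 = 2J(v) + (2\lambda_m/m)\|v\|_{L^m(\Omega)}^m$. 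Property (iv) follows at once: by (ii)--(iii), $\inf_{\mathcal X} J = \inf_{\mathcal S} J$; moreover, if $w \in \mathcal X$ attains this infimum, then the limit $\psi$ obtained from (ii) forces $J(v(s)) \equiv J(w)$, and \eqref{eq:3.2} then yields $\partial_s(|v|^{(m-2)/2} v) \equiv 0$, so $v(s) \equiv w$ and $w$ itself solves \eqref{eq:1.10}, \eqref{eq:1.11} with least energy.

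The main obstacle is (vi), because $t_*$ is not manifestly continuous under weak $H^1_0$-convergence. My plan is to recast membership in $\mathcal X$ through a weakly closed condition. Given $w_n \in \mathcal X$ with $w_n \rightharpoonup w$ in $H^1_0(\Omega)$, I would run the corresponding rescaled flows $v_n(s)$ and pass, using the uniform two-sided $H^1_0$-bounds from (i)--(iii) together with continuous dependence for FDE, to a limiting trajectory $v(s)$ starting from $w$; the Berryman-Holland lower extinction bound must then be shown to survive the passage to the limit, which rules out $t_*(w) < 1$ and, combined with the matching upper bound, forces $t_*(w) = 1$. Safeguarding the lower extinction estimate under weak convergence of initial data, so that the limiting orbit cannot prematurely extinguish, is the delicate point where I expect most of the technical effort to lie.
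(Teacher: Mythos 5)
For this proposition the paper supplies no proof at all: it simply refers to \cite[Propositions 5--8 and 10]{AK09}, so your attempt has to be judged on its own. Parts (i), (iii), (v) via the scaling $t_*(\alpha u_0)=\alpha^{m-2}t_*(u_0)$ and time translation are correct, and the Lyapunov/LaSalle argument you sketch for (ii) and (iv) is the standard one and works (one small repair: strong $H^1_0$-convergence of $v(s_n)$ does not follow from monotonicity of $J(v(s_n))$ alone, since that only gives $J(\psi)\le\lim J(v(s_n))$ by weak lower semicontinuity; the usual route is to use that the dissipation term vanishes along $s_n$, hence $J'(v(s_n))\to 0$ in $H^{-1}(\Omega)$, and to test against $v(s_n)-\psi$ to upgrade weak to strong convergence of the gradients).

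The genuine gap is in (vi). Your plan --- ``the Berryman--Holland lower extinction bound must then be shown to survive the passage to the limit'' --- is a restatement of the claim $t_*(w)\ge 1$, not an argument, and it cannot work as phrased because the constants $c_1,c_2$ in the Berryman--Holland estimate depend on the initial datum and are not controlled under weak convergence. The missing idea is that the lower bound can be made \emph{universal}: since $R(z)\ge C_m^{-1}$ for every $z\in H^1_0(\Omega)\setminus\{0\}$ (this is just the definition of the best Sobolev--Poincar\'e constant, not a property of the orbit), the identity \eqref{eq:3.1} written for the unrescaled flow gives $\frac{\d}{\d t}\|u(t)\|_{L^m(\Omega)}^{m-2}\ge -(\lambda_m C_m^2)^{-1}$, hence
\begin{equation*}
\|u(t)\|_{L^m(\Omega)}^{m-2}\;\ge\;\frac{\bigl(t_*(u_0)-t\bigr)_+}{\lambda_m C_m^2}
\qquad\mbox{for all }t\ge 0,
\end{equation*}
with a constant independent of $u_0$. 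With this in hand (vi) closes as follows: if $w_n\in\mathcal X$ and $w_n\rightharpoonup w$, then $w_n\to w$ in $L^m(\Omega)$ by compactness, so $|w_n|^{m-2}w_n\to|w|^{m-2}w$ in $H^{-1}(\Omega)$, and the $H^{-1}$-contraction (the unrescaled analogue of Lemma \ref{L:conti-dep}) gives $|u_n|^{m-2}u_n(t)\to|u|^{m-2}u(t)$ in $H^{-1}(\Omega)$ for every $t$; taking $t=1$ yields $t_*(w)\le 1$. If $t_*(w)<1$, pick $t\in(t_*(w),1)$: the display gives $\|u_n(t)\|_{L^m(\Omega)}^{m-2}\ge(1-t)/(\lambda_m C_m^2)>0$, while $\|u_n(t)\|_{H^1_0(\Omega)}\le\|w_n\|_{H^1_0(\Omega)}\le C$, so along a subsequence $u_n(t)$ converges strongly in $L^m(\Omega)$ to a limit which must be $u(t)=0$ by the $H^{-1}$-convergence --- a contradiction. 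Without the data-independent lower bound, your limiting-trajectory scheme has no way to exclude premature extinction, so as written part (vi) of your proof is incomplete.
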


Proofs of (i)--(vi) can be found in~\cite[Propositions 5--8 and 10]{AK09}.

\subsection{Lemmas}

In this subsection, we shall develop several lemmas for later use. The following lemma provides a uniform estimate for (possibly sign-changing) solutions of the rescaled problem \eqref{eq:1.6}--\eqref{eq:1.8}. To prove this, we shall employ some results of DiBenedetto and Kwong~\cite{DK92} and DiBenedetto, Kwong and Vespri~\cite{DKV91}.

\begin{lemma}[Uniform estimate for rescaled solutions]\label{L:e:v-i}
 Assume \eqref{hypo}. Then there exists a constant $C > 0$ depending
 only on $N$, $m$ 
 such that for every $s_0 \in (0,\log 2)$ and $v_0 \in \mathcal X$, the
 unique solution $v = v(x,s)$ of \eqref{eq:1.6}--\eqref{eq:1.8}
 with the initial data $v_0$ satisfies
$$
\|v(s)\|_{L^\infty(\Omega)} \leq C \left( e^{s_0} - 1 \right)^{-\frac N
 {\kappa}} R(v_0)^{\frac{4m}{\kappa(m-2)}} \quad \mbox{
 for all } \ s \geq s_0
$$
with $\kappa := 2N - Nm + 2m > 0$ {\rm (}by \eqref{hypo}{\rm )}.
\end{lemma}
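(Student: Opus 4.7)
The plan is to reduce the assertion to a classical $L^m$--$L^\infty$ smoothing effect for the \emph{original} FDE \eqref{eq:1.1}--\eqref{eq:1.3} via the logarithmic change of variables. Because $v_0\in\mathcal X$ forces $t_*(v_0)=1$, setting
\[
u(x,t):=(1-t)^{1/(m-2)}\,v\bigl(x,\log(1/(1-t))\bigr)
\]
realises $u$ as the unique solution of \eqref{eq:1.1}--\eqref{eq:1.3} with $u_0=v_0$ and extinction time $1$, and the inverse relation gives the pointwise identity $\|v(s)\|_{L^\infty(\Omega)}=e^{s/(m-2)}\|u(1-e^{-s})\|_{L^\infty(\Omega)}$. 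From the results of DiBenedetto--Kwong~\cite{DK92} and DiBenedetto--Kwong--Vespri~\cite{DKV91}, combined with the natural scaling $u(x,t)\mapsto\lambda u(\mu x,\mu^{2}\lambda^{m-2}t)$ of \eqref{eq:1.1}, one has a smoothing estimate
\[
\|u(t)\|_{L^\infty(\Omega)}\leq C\,t^{-N/\kappa}\,\|u_0\|_{L^m(\Omega)}^{2m/\kappa}\qquad\text{for }0<t<1,
\]
the pair of exponents $(N/\kappa,2m/\kappa)$ being uniquely pinned down by dimensional analysis.

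The second step converts $\|u_0\|_{L^m(\Omega)}$ into a quantity controlled by $R(v_0)=R(u_0)$. Testing \eqref{eq:1.1} with $u$ gives the identity $\frac{\d}{\d t}\|u\|_{L^m(\Omega)}^{m}=-m'\|u\|_{H^1_0(\Omega)}^{2}=-m'R(u)^{2}\|u\|_{L^m(\Omega)}^{2}$; using the non-increase of $R(u(\cdot))$ recalled in \S\ref{Ss:Prelim} yields $\frac{\d}{\d t}\|u(t)\|_{L^m(\Omega)}^{m-2}\geq-\lambda_m^{-1}R(u_0)^{2}$, and integrating on $[0,1]$ produces
\[
\|u_0\|_{L^m(\Omega)}\leq\lambda_m^{-1/(m-2)}R(v_0)^{2/(m-2)}.
\]
Substituting into the smoothing estimate at the specific time $t_0:=1-e^{-s_0}=e^{-s_0}(e^{s_0}-1)$ and multiplying by $e^{s_0/(m-2)}$ gives, for $s_0\in(0,\log 2)$,
\[
\|v(s_0)\|_{L^\infty(\Omega)}\leq C\,e^{s_0/(m-2)}\,e^{Ns_0/\kappa}\,(e^{s_0}-1)^{-N/\kappa}\,R(v_0)^{4m/(\kappa(m-2))},
\]
and the two exponential prefactors are bounded on $(0,\log 2)$ by a constant depending only on $N$ and $m$; this establishes the asserted inequality at the single time $s=s_0$.

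To propagate the bound from $s=s_0$ to every $s\geq s_0$ I would apply a time-shift argument exploiting the invariance of $\mathcal X$ under the flow. For fixed $s\geq s_0$, set $\tilde v(\sigma):=v(\sigma+s-s_0)$: by Proposition~\ref{P:X}(i) this is again a solution of \eqref{eq:1.6}--\eqref{eq:1.8} with initial datum $\tilde v_0=v(s-s_0)\in\mathcal X$, so the estimate just proved applies to $\tilde v$ at time $s_0$. Invoking $R(v(s-s_0))\leq R(v_0)$ (monotonicity of the Rayleigh quotient, \S\ref{Ss:Prelim}) yields
\[
\|v(s)\|_{L^\infty(\Omega)}=\|\tilde v(s_0)\|_{L^\infty(\Omega)}\leq C\,(e^{s_0}-1)^{-N/\kappa}\,R(v_0)^{4m/(\kappa(m-2))},
\]
which is exactly the claim. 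The main technical hurdle is pinning down, inside \cite{DK92,DKV91} (or reproducing via a Moser/De~Giorgi iteration together with the extinction-time control), a smoothing estimate with \emph{precisely} the scaling exponents $N/\kappa$ and $2m/\kappa$ that is valid for possibly sign-changing weak solutions on a bounded domain with homogeneous Dirichlet data; once this is in hand, the calibration through $t_*(v_0)=1$, the $\mathcal X$-invariance, and the monotonicity of $R$ together make the rest of the argument routine.
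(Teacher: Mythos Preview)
Your argument is correct and close to the paper's, but the two diverge in one interesting place. Both routes start from the same $L^m\!\to\!L^\infty$ smoothing estimate for the original FDE and the same conversion $\|v_0\|_{L^m(\Omega)}^{m-2}\leq\lambda_m^{-1}R(v_0)^2$ (which in the paper is read off from the extinction-time bounds of~\cite{AK09}). The sign-changing issue you flag as a hurdle is handled explicitly in the paper: one extends $|u_0|$ by zero to a larger domain $\Omega_*\supset B_{4R}\supset\Omega$, invokes the strict positivity result of~\cite{DKV91} to ensure the extended nonnegative solution $\overline u$ is positive on $\partial\Omega$, applies comparison to get $u\leq\overline u$, uses the local $L^\infty$ estimate of~\cite{DK92} for $\overline u$, and lets $R\to\infty$; repeating with $-u$ gives the two-sided bound.

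The genuine difference is in how the bound is propagated from the single time $s_0$ to all $s\geq s_0$. The paper stays in the $u$-variable, applies the shifted smoothing estimate $\|u(t)\|_{L^\infty}\leq\gamma(t-s)^{-N/\kappa}\|u(s)\|_{L^m}^{2m/\kappa}$ with the particular substitution $s=(t-t_0)/(1-t_0)$, and feeds in the decay $\|u(s)\|_{L^m}^{m-2}\lesssim R(u_0)^2(1-s)$ from~\cite[Proposition~2]{AK09}; the exponents then combine to give $\|u(t)\|_{L^\infty}\leq C_0(1-t)^{1/(m-2)}$ directly, which rescales to the claim. Your time-shift argument in the $v$-variable---using that $v(s-s_0)\in\mathcal X$ by Proposition~\ref{P:X}(i) and $R(v(s-s_0))\leq R(v_0)$---is cleaner: it avoids the substitution and the appeal to~\cite[Proposition~2]{AK09}, replacing them by the structural facts that the rescaled flow preserves $\mathcal X$ and decreases $R$. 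Both arguments are valid and rest on the same ingredients; yours packages the propagation step more economically.
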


\begin{proof}
Let $u$ be a solution of \eqref{eq:1.1}--\eqref{eq:1.3} with an initial data $u_0 \in H^1_0(\Omega)$. Fix $T>0$, $R > 0$ and let $\Omega_* \subset \mathbb R^N$ be a smooth bounded domain such that
$$
\Omega \subset B_R \subset B_{4R} \subset \Omega_*,
$$
where $B_r := \{x \in \mathbb R^N \colon |x| < r\}$ for $r > 0$. Moreover, set a nonnegative function $\overline u_0 \in H^1_0(\Omega_*)$ by
$$
\overline u_0(x) = \begin{cases}
		    |u_0(x)| \quad &\mbox{ if } \ x \in \Omega,\\
		    0 &\mbox{ otherwise.}
		   \end{cases}
$$
Let $\overline u$ be the unique weak solution for \eqref{eq:1.1}--\eqref{eq:1.3} with $\Omega$, $\partial \Omega$ and $u_0$ replaced by $\Omega_*$, $\partial \Omega_*$ and $\overline u_0$, respectively. Then by the positivity result $\overline u > 0$ in $\Omega_* \times (0,T)$ due to~\cite{DKV91}, one particularly observes that
$$
\overline u > 0 \quad \mbox{ on } \ \partial \Omega \times (0,T).
$$
Hence by comparison principle,
\begin{equation}\label{cp}
u \leq \overline u \quad \mbox{ in } \ \Omega \times (0,T).
\end{equation}
A local $L^\infty$-estimate for \emph{nonnegative} solutions to FDE (see Theorem 3.1 of~\cite{DK92} with some change of notation, e.g., $u(x,t)$ and $m$ of~\cite{DK92} correspond to $u(x,t)^{m-1}$ and $1/(m-1)$, respectively, of our notation) yields that
$$
\sup_{x \in B_R} \overline u(x,t) \leq \gamma t^{-\frac N {\kappa}} \sup_{0 < \tau < t} \left( \int_{B_{2R}} \overline u(x,\tau)^m \, \d x \right)^{\frac 2 {\kappa}} + \gamma \left( \frac t {R^2} \right)^{\frac{1}{m-2}} \quad \mbox{ for } \ t \in (0,T)
$$
with some constant $\gamma = \gamma(N,m) > 0$ and $\kappa := 2N-Nm+2m > 0$ (by \eqref{hypo}). Here, by using a standard energy estimate for FDE, one can derive
$$
\sup_{t \geq 0} \|\overline u(t)\|_{L^m(\Omega_*)}^m
\leq \|\overline u_0\|_{L^m(\Omega_*)}^m
= \|u_0\|_{L^m(\Omega)}^m,
$$
which along with \eqref{cp} and the relation $\Omega \subset B_R$ implies
$$
\sup_{x \in \Omega} u(x,t)
\leq \sup_{x \in B_R} \overline u(x,t)
\leq \gamma t^{-\frac N {\kappa}} \|u_0\|_{L^m(\Omega)}^{\frac{2m}{\kappa}} + \gamma \left( \frac t {R^2} \right)^{\frac 1 {m-2}}
 \quad \mbox{ for } \ t \in (0,T).
$$
Since $\gamma$ and $\kappa$ are independent of $R$ and $T$, by letting $R, T \to \infty$, we conclude that 
$$
\sup_{x \in \Omega} u(x,t)
\leq \gamma t^{-\frac N {\kappa}} \|u_0\|_{L^m(\Omega)}^{\frac{2m}{\kappa}} \quad \mbox{ for all } \ t > 0.
$$
Repeating the preceding argument with $u$ and $u_0$ replaced by $-u$ and $-u_0$, respectively, we deduce that
$$
\sup_{x \in \Omega} |u(x,t)|
\leq \gamma t^{-\frac N {\kappa}} \|u_0\|_{L^m(\Omega)}^{\frac{2m}{\kappa}} \quad \mbox{ for all } \ t > 0.
$$
Furthermore, replace $u_0$ by $u(s)$ for $0 < s < t$ to get
\begin{equation}\label{loc-bdd-est}
\sup_{x \in \Omega} |u(x,t)|
\leq \gamma (t-s)^{-\frac N {\kappa}} \|u(s)\|_{L^m(\Omega)}^{\frac{2m}{\kappa}} \quad \mbox{ for all } \ 0 < s < t < \infty.
\end{equation}

In particular, let us set $u_0 = v_0 \in \mathcal X$. Then $u$ vanishes at $t_*(v_0) = 1$. Moreover, let $t_0 \in (0, 1/2)$ be given by
$$
s_0 = \log \left(\frac{1}{1-t_0}\right) > 0.
$$
As in~\cite[Lemma 6.1]{DKV91} (see also~\cite{SavareVespri}), substituting
$$
s = t - \dfrac{t_0}{1-t_0} (1-t) = \dfrac{t-t_0}{1-t_0} \in (0,t) \quad \mbox{ for } \ t \in (t_0,1)
$$
to \eqref{loc-bdd-est} and employing~\cite[Proposition 2]{AK09}, one can derive that
$$
\|u(t)\|_{L^\infty(\Omega)} \leq C_0(1-t)_+^{\frac 1 {m-2}}
\quad \mbox{ for all } \ t \geq t_0
$$
with a constant $C_0$ given by
$$
C_0 
= \tilde\gamma \left(\dfrac{t_0}{1 - t_0}\right)^{-\frac{N}{\kappa}}
 R(u_0)^{\frac{4m}{\kappa(m-2)}}
= \tilde \gamma \left( e^{s_0} - 1 \right)^{-\frac{N}{\kappa}}
 R(v_0)^{\frac{4m}{\kappa(m-2)}},
$$
where $\tilde\gamma$ is a constant depending only on $m$, $N$. 
By change of variables, $v(x,s) = (1 - t)^{-1/(m-2)} u(x,t)$ and $s = \log
 (1/(1-t))$, we find that
$$
\|v(s)\|_{L^\infty(\Omega)} \leq C_0
\quad \mbox{ for all } \ s \geq s_0.
$$
The proof is completed. 
\end{proof}

We next exhibit a couple of variational properties of the Rayleigh
quotient on the set $\mathcal X$.

\begin{lemma}[Rayleigh quotient on $\mathcal X$]\label{L:infR}
Assume \eqref{hypo}. It follows that
\begin{align*}
\inf_{w \in \mathcal X} R(w) = C_m^{-1} > 0, \quad
\inf_{w \in \mathcal X} \|w\|_{L^m(\Omega)} \geq (\lambda_m
 C_m^2)^{-1/(m-2)} > 0,\\
R(w) \leq (\lambda_m C_m^2)^{1/(m-2)} \|w\|_{H^1_0(\Omega)} \quad \mbox{ for all } \ w \in \mathcal X.
\end{align*}
In particular, $R(w) < \infty$ for all $w \in
 \mathcal X$. Moreover, $R(\cdot)$ is continuous on $\mathcal X$ in the
 strong topology of $H^1_0(\Omega)$.
\end{lemma}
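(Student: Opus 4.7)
The plan is to establish the $L^m(\Omega)$ lower bound (the second assertion) first; the third assertion is then an algebraic consequence, while the first and the continuity claim follow essentially by bookkeeping. The only substantive step is the proof of the $L^m$ lower bound on $\mathcal X$.

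For this step I would exploit Proposition \ref{P:X}(v), which asserts $t_*(w)=1$ for every $w\in\mathcal X$. Let $u$ denote the solution to \eqref{eq:1.1}--\eqref{eq:1.3} with $u_0=w$. Multiplying \eqref{eq:1.1} by $u$ and integrating over $\Omega$ yields the classical dissipation identity
\[
\frac{1}{m'}\,\frac{\d}{\d t}\|u(t)\|_{L^m(\Omega)}^m = -\|u(t)\|_{H^1_0(\Omega)}^2,
\]
and combining this with \eqref{eq:1.12} gives $y'(t)\le -m'C_m^{-2}\,y(t)^{2/m}$ for $y(t):=\|u(t)\|_{L^m(\Omega)}^m$. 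Integrating this separable differential inequality on $[0,1]$, using $y(1)=0$ together with $\lambda_m=(m-1)/(m-2)$, produces $\|w\|_{L^m(\Omega)}^{m-2}\ge (\lambda_m C_m^2)^{-1}$, which is the desired bound. Since $R(w)=\|w\|_{H^1_0(\Omega)}/\|w\|_{L^m(\Omega)}$, the third assertion follows at once from this.

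For the first assertion, the inequality $R(w)\ge C_m^{-1}$ is precisely \eqref{eq:1.12}, whence $\inf_{\mathcal X} R\ge C_m^{-1}$. For the reverse inequality, Proposition \ref{P:X}(iii) gives $\mathcal S\subset\mathcal X$; since $m<2^*$ by \eqref{hypo}, an extremizer of \eqref{eq:1.12} exists and, after scaling by a suitable constant $c>0$ chosen so that $c^{m-2}=(\lambda_m C_m^2)^{-1}$, becomes a least energy solution $\phi$ of \eqref{eq:1.10}--\eqref{eq:1.11} with $R(\phi)=C_m^{-1}$. Hence $\inf_{\mathcal X}R\le R(\phi)=C_m^{-1}$, so equality holds.

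Continuity of $R$ on $\mathcal X$ in the strong topology of $H^1_0(\Omega)$ is then routine: if $w_n\to w$ in $H^1_0(\Omega)$ with $w_n\in\mathcal X$, then $w\in\mathcal X$ by Proposition \ref{P:X}(vi) (weak sequential closedness yields strong sequential closedness), both $\|w_n\|_{H^1_0(\Omega)}\to\|w\|_{H^1_0(\Omega)}$ and $\|w_n\|_{L^m(\Omega)}\to\|w\|_{L^m(\Omega)}$ by continuity of the embedding $H^1_0(\Omega)\hookrightarrow L^m(\Omega)$, and the second assertion ensures that the limit denominator $\|w\|_{L^m(\Omega)}$ is bounded below by $(\lambda_m C_m^2)^{-1/(m-2)}>0$, so the quotient converges. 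I expect the only computational step of any substance to be the ODE integration delivering the $L^m$ bound, where the key input is the exact extinction time $t_*(w)=1$ built into $\mathcal X$, which is precisely matched against the dissipation rate dictated by Sobolev--Poincar\'e.
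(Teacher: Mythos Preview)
Your proposal is correct and follows essentially the same approach as the paper. The only difference is packaging: the paper cites the two-sided extinction-time estimate from \cite[Corollary~1]{AK09}, namely $\lambda_m \|w\|_{L^m}^{m-2} R(w)^{-2} \le t_*(w) \le \lambda_m C_m^2 \|w\|_{L^m}^{m-2}$, and reads off both the $L^m$ lower bound and $R(w)\ge C_m^{-1}$ from it, whereas you re-derive the upper extinction-time bound by the ODE argument (which is exactly how that corollary is proved) and obtain $R(w)\ge C_m^{-1}$ directly from the Sobolev--Poincar\'e inequality \eqref{eq:1.12} rather than via the lower extinction-time bound. The remaining steps---the third assertion, the infimum being attained by a least-energy solution in $\mathcal S\subset\mathcal X$, and continuity via the uniform $L^m$ lower bound---match the paper's argument.
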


\begin{proof}
 Let $w \in \mathcal X$ and recall that $t_*(w) = 1$
 (see Proposition \ref{P:X}). From the estimates from below and
 above for the extinction time $t_*(\cdot)$ (see~\cite[Corollary 1]{AK09}),
 it follows that
$$
\lambda_m \|w\|_{L^m(\Omega)}^{m-2} R(w)^{-2} \leq 1 \leq \lambda_m C_m^2
 \|w\|_{L^m(\Omega)}^{m-2}, 
$$
which yields that
$$
\dfrac 1 {\lambda_m C_m^2} \leq \|w\|_{L^m(\Omega)}^{m-2}
\quad \mbox{ and } \quad
\lambda_m \|w\|_{L^m(\Omega)}^{m-2} \leq R(w)^2.
$$
Hence we observe that $R(w) \geq C_m^{-1}$ and $\|w\|_{L^m(\Omega)} \geq
 (\lambda_m C_m^2)^{-1/(m-2)} > 0$ for all $w \in
 \mathcal X$. Moreover, it is known that $R(\psi) = C_m^{-1}$ for least energy solutions $\psi$ of \eqref{eq:1.10}, \eqref{eq:1.11} under $m < 2^*$. It follows that
$$
R(w) = \dfrac{\|w\|_{H^1_0(\Omega)}}{\|w\|_{L^m(\Omega)}} \leq (\lambda_m C_m^2)^{1/(m-2)} \|w\|_{H^1_0(\Omega)} \quad \mbox{ for all } \ w \in \mathcal X.
$$
Moreover, if $w_n \in \mathcal
 X$ and $w_n \to w$ strongly in $H^1_0(\Omega)$ (hence, $w \in \mathcal
 X$ by Proposition \ref{P:X}), then one can derive that $R(w_n)
 \to R(w)$, since $\|w_n\|_{L^m(\Omega)}$ and $\|w\|_{L^m(\Omega)}$ are
 not less than $(\lambda_m
 C_m^2)^{-1/(m-2)} > 0$.
\end{proof}

Moreover we have:
\begin{lemma}[Estimate for solutions on $\mathcal X$]\label{L:est1}
 Let $v_0 \in \mathcal X$ and let $v$ be the solution of
 \eqref{eq:1.6}, \eqref{eq:1.7} for the initial data $v_0$. Then it
 holds that
$$
\sup_{s \geq 0} \|v(s)\|_{H^1_0(\Omega)}^2 \leq 
2 J(v_0) + \dfrac{2 R(v_0)^{2m/(m-2)}}{m \lambda_m^{2/(m-2)}}.
$$
\end{lemma}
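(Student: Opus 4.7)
The plan is to combine the two monotonicity properties of the rescaled flow — the decrease of the energy $J(v(s))$ and the decrease of the Rayleigh quotient $R(v(s))$ — with the characterization of $\mathcal{X}$ via the extinction time. The identity
\[
\|v(s)\|_{H^1_0(\Omega)}^2 = 2J(v(s)) + \frac{2\lambda_m}{m}\|v(s)\|_{L^m(\Omega)}^m
\]
follows from the definition of $J$, so everything reduces to controlling $J(v(s))$ and $\|v(s)\|_{L^m(\Omega)}^m$ by quantities depending only on $v_0$.

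For the first term, the energy inequality \eqref{eq:3.2} gives $s \mapsto J(v(s))$ non-increasing, hence $J(v(s)) \leq J(v_0)$. For the second term, I would first invoke Proposition \ref{P:X}(i) to know $v(s) \in \mathcal{X}$ for every $s \geq 0$, which in particular yields $t_*(v(s)) = 1$. The key estimate extracted in the proof of Lemma \ref{L:infR} (the lower bound on the extinction time) then reads, when applied to $w = v(s)$,
\[
\lambda_m \|v(s)\|_{L^m(\Omega)}^{m-2} \leq R(v(s))^2,
\]
so raising this to the power $m/(m-2)$ (equivalently, multiplying by $\|v(s)\|_{L^m(\Omega)}^2$ and then iterating) gives
\[
\|v(s)\|_{L^m(\Omega)}^m \leq \frac{R(v(s))^{2m/(m-2)}}{\lambda_m^{m/(m-2)}}.
\]
The exponent bookkeeping just uses $m/(m-2) = 1 + 2/(m-2)$.

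To finish, I would use the fact (recalled in Subsection \ref{Ss:Prelim}) that $s \mapsto R(v(s))$ is non-increasing along the rescaled flow, so $R(v(s)) \leq R(v_0)$. Plugging all these bounds into the identity for $\|v(s)\|_{H^1_0(\Omega)}^2$ yields, uniformly in $s \geq 0$,
\[
\|v(s)\|_{H^1_0(\Omega)}^2 \leq 2J(v_0) + \frac{2\lambda_m}{m}\cdot\frac{R(v_0)^{2m/(m-2)}}{\lambda_m^{m/(m-2)}} = 2J(v_0) + \frac{2 R(v_0)^{2m/(m-2)}}{m\lambda_m^{2/(m-2)}},
\]
which is exactly the desired inequality. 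There is no real obstacle: the lemma is a bookkeeping consequence of the two monotonicities and the extinction-time estimate already available on $\mathcal{X}$; the only care needed is to use $v(s) \in \mathcal{X}$ (not just $v_0 \in \mathcal{X}$) to apply the Rayleigh-quotient lower bound on $\|v(s)\|_{L^m(\Omega)}^{m-2}$ at each time $s$.
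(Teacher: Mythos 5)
Your argument is correct and coincides with the paper's own proof: both rest on $v(s)\in\mathcal X$ for all $s\ge 0$, the bound $\lambda_m\|v(s)\|_{L^m(\Omega)}^{m-2}\le R(v(s))^2$ from the proof of Lemma \ref{L:infR}, and the monotonicity of $J(v(\cdot))$ and $R(v(\cdot))$, combined through the identity $\frac12\|\nabla v(s)\|_{L^2(\Omega)}^2=J(v(s))+\frac{\lambda_m}{m}\|v(s)\|_{L^m(\Omega)}^m$. The exponent bookkeeping also matches.
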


\begin{proof}
Note that $v(s)$ belongs to $\mathcal X$ for all $s \geq 0$. 
Hence, by the proof of Lemma \ref{L:infR}, 
$$
\|v(s)\|_{L^m(\Omega)}^{m-2} \leq \dfrac{R(v(s))^2}{\lambda_m} \quad \mbox{ for
 all } \ s \geq 0.
$$
Since $J(v(\cdot))$ and $R(v(\cdot))$ are nonincreasing, it follows that
$$
\dfrac 1 2 \|\nabla v(s)\|_{L^2(\Omega)}^2
= J(v(s)) + \dfrac{\lambda_m}m \|v(s)\|_{L^m(\Omega)}^m
\leq J(v_0) + \dfrac{R(v_0)^{2m/(m-2)}}{m \lambda_m^{2/(m-2)}},
$$
which completes the proof.
\end{proof}

We close this section with the continuous dependence of solutions to
\eqref{eq:1.6}--\eqref{eq:1.8} on data.
\begin{lemma}[Continuous dependence of solutions on data]\label{L:conti-dep}
 For $i = 1,2$, let $v_i$ be solutions to \eqref{eq:1.6}--\eqref{eq:1.8} with
 initial data $v_{0,i} \in H^1_0(\Omega)$. It then holds true that
 $$
 \left\| |v_1|^{m-2}v_1(s) - |v_2|^{m-2}v_2(s)
 \right\|_{H^{-1}(\Omega)}^2
 \leq  \left\| |v_{0,1}|^{m-2}v_{0,1} - |v_{0,2}|^{m-2}v_{0,2}
 \right\|_{H^{-1}(\Omega)}^2 e^{2 \lambda_m s}
 $$
 for all $s \geq 0$.
\end{lemma}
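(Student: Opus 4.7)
The plan is to subtract the two rescaled equations, test the difference against an $H^{-1}$-duality element, and exploit the monotonicity of $r\mapsto|r|^{m-2}r$. Let me set $w_i:=|v_i|^{m-2}v_i$ for $i=1,2$ and write $W:=w_1-w_2$, $V:=v_1-v_2$. From \eqref{eq:1.6} I obtain, in $H^{-1}(\Omega)$,
\begin{equation*}
\partial_s W - \Delta V = \lambda_m W, \qquad s>0,
\end{equation*}
and the regularity $w_i\in W^{1,\infty}(0,T;H^{-1}(\Omega))$ guaranteed by Definition \ref{D:sol} makes $W\in W^{1,\infty}(0,T;H^{-1}(\Omega))$.

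Let $\xi(s):=(-\Delta)^{-1}W(s)\in H^1_0(\Omega)$ be the unique solution of $-\Delta\xi=W$ in $\Omega$ with zero boundary trace, so that $\|W(s)\|_{H^{-1}(\Omega)}^2=\langle W(s),\xi(s)\rangle_{H^1_0}$. Since $(-\Delta)^{-1}$ is a self-adjoint isomorphism between $H^{-1}(\Omega)$ and $H^1_0(\Omega)$, the map $s\mapsto\|W(s)\|_{H^{-1}(\Omega)}^2$ is absolutely continuous and its derivative equals $2\langle\partial_sW(s),\xi(s)\rangle_{H^1_0}$. Pairing the difference equation with $\xi(s)$ yields
\begin{equation*}
\tfrac{1}{2}\tfrac{\d}{\d s}\|W(s)\|_{H^{-1}(\Omega)}^2
= \langle\Delta V(s),\xi(s)\rangle_{H^1_0} + \lambda_m\|W(s)\|_{H^{-1}(\Omega)}^2,
\end{equation*}
and integration by parts combined with $-\Delta\xi(s)=W(s)$ gives
\begin{equation*}
\langle\Delta V(s),\xi(s)\rangle_{H^1_0}
= -\int_\Omega\nabla V(s)\cdot\nabla\xi(s)\,\d x
= -\int_\Omega (v_1-v_2)(|v_1|^{m-2}v_1-|v_2|^{m-2}v_2)(s)\,\d x.
\end{equation*}

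The crucial observation is that the map $r\mapsto|r|^{m-2}r$ is monotone non-decreasing on $\mathbb R$, so the integrand above is pointwise non-negative. Hence the $-\Delta V$ term contributes a non-positive quantity and we obtain the differential inequality
\begin{equation*}
\tfrac{\d}{\d s}\|W(s)\|_{H^{-1}(\Omega)}^2 \leq 2\lambda_m\|W(s)\|_{H^{-1}(\Omega)}^2
\qquad\text{for a.e. } s>0.
\end{equation*}
Applying Gr\"onwall's lemma on $[0,s]$ and recalling $W(0)=|v_{0,1}|^{m-2}v_{0,1}-|v_{0,2}|^{m-2}v_{0,2}$ yields the desired estimate.

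The only step requiring care is the chain rule for $s\mapsto\|W(s)\|_{H^{-1}(\Omega)}^2$; this is standard for $W\in W^{1,\infty}(0,T;H^{-1}(\Omega))$ since $(-\Delta)^{-1}$ is self-adjoint and positive, but if one wished to avoid invoking it abstractly, a mollification/Steklov averaging in the time variable would justify the computation and then one passes to the limit using $W\in C([0,T];H^{-1}(\Omega))$. Everything else is essentially the dissipativity (accretiveness) of $r\mapsto|r|^{m-2}r$ in the $H^{-1}$-topology, which is the well-known framework underlying the well-posedness of FDE.
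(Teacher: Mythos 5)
Your proposal is correct and follows the same route as the paper: subtract the equations, test with $(-\Delta)^{-1}\bigl(|v_1|^{m-2}v_1 - |v_2|^{m-2}v_2\bigr)$, discard the non-negative monotonicity term, and apply Gronwall's inequality. Your added remarks on justifying the chain rule for $s\mapsto\|W(s)\|_{H^{-1}(\Omega)}^2$ are a welcome extra level of care that the paper leaves implicit.
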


This lemma can be proved in a standard way; however, we give a proof for the
 convenience of the reader. 

\begin{proof} Subtract equations and test it
 by $(-\Delta)^{-1} (|v_1|^{m-2}v_1(s) - |v_2|^{m-2}v_2(s))$ to see that
\begin{align*}
\dfrac 1 2 \dfrac{\d}{\d s} \left\| |v_1|^{m-2}v_1(s) - |v_2|^{m-2}v_2(s)
 \right\|_{H^{-1}(\Omega)}^2
+ \int_\Omega \left( v_1(s) - v_2(s) \right) \left( |v_1|^{m-2}v_1(s)
 - |v_2|^{m-2}v_2(s) \right) \,\d x\\
= \lambda_m \left\| |v_1|^{m-2}v_1(s) - |v_2|^{m-2}v_2(s)
 \right\|_{H^{-1}(\Omega)}^2.
\end{align*}
By using the monotonicity of $w \mapsto |w|^{m-2}w$ and by applying
 Gronwall's inequality, we obtain the desired conclusion.
\end{proof}

\section{Proof of Theorem \ref{T:MR1}}\label{S:M}

This section is devoted to a proof of Theorem \ref{T:MR1}. 
We shall prove the stability for all \emph{local minimizers $\phi$ of
$J$ over $\mathcal X$}, i.e., $\phi$ satisfies
\begin{equation}\label{locmin}
J(\phi) = \inf \{J(w) \colon w \in \mathcal X \cap
B_{H^1_0(\Omega)}(\phi;r_0)\}
\end{equation}
for some $r_0 > 0$. Obviously, every least energy solution of
\eqref{eq:1.10}, \eqref{eq:1.11} is a global minimizer of $J$ over
$\mathcal X$, since the least energy is the minimum of $J$ over $\mathcal
X$ and $\mathcal S \subset \mathcal X$ (see Proposition \ref{P:X}); hence it
always satisfies \eqref{locmin} (with $r_0 = \infty$).
Moreover, we stress again that $\phi$ is not supposed to be isolated
even in the neighborhood $\mathcal X \cap
B_{H^1_0(\Omega)}(\phi;r_0)$. Hence there might be a sequence of (local)
minimizers $w_n \in \mathcal X \cap B_{H^1_0(\Omega)}(\phi;r_0)
\setminus \{\phi\}$ converging to $\phi$ in $H^1_0(\Omega)$.

Our result reads,
\begin{theorem}[Stability of local minimizers of $J$ over $\mathcal
 X$]\label{T:MR2}
 Let $\phi > 0$ be a local minimizer of $J$ over $\mathcal X$. 
 Then $\phi$ is stable under the
 flow on $\mathcal X$ generated by solutions for
 \eqref{eq:1.6}--\eqref{eq:1.8}. Hence, in particular, Theorem
 \ref{T:MR1} holds true.
\end{theorem}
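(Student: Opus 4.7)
The plan is to argue by contradiction, combining three ingredients: the local minimizer property of $\phi$, the uniform $L^\infty$-estimate of Lemma \ref{L:e:v-i}, and the \L ojasiewicz--Simon inequality of \cite{FeiSim00}. Suppose $\phi$ is not stable. Then I can find $\varepsilon_0 \in (0, r_0)$ (with $r_0$ from \eqref{locmin}) and a sequence $v_{0,n} \in \mathcal X$ with $v_{0,n} \to \phi$ in $H^1_0(\Omega)$, together with first exit times $s_n > 0$ such that $\|v_n(s) - \phi\|_{H^1_0(\Omega)} \leq \varepsilon_0$ on $[0, s_n]$ and $\|v_n(s_n) - \phi\|_{H^1_0(\Omega)} = \varepsilon_0$, using the right-continuity of the flow in $H^1_0(\Omega)$ from \eqref{apdx1}. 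After shrinking $\varepsilon_0$ if necessary, I arrange the \L ojasiewicz--Simon inequality
$$
|J(v) - J(\phi)|^{1-\theta} \leq \omega \|J'(v)\|_{H^{-1}(\Omega)}
$$
to hold uniformly for $v \in B_{H^1_0(\Omega)}(\phi; 2\varepsilon_0)$ with $\|v\|_{L^\infty(\Omega)} \leq M$, for some fixed constants $M$, $\theta \in (0, 1/2]$, $\omega > 0$.

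Since $v_n(s) \in \mathcal X \cap B_{H^1_0(\Omega)}(\phi; r_0)$ for $s \in [0, s_n]$, the local minimizer property together with the monotonicity of $J$ along the flow yields
$$
0 \leq E_n(s) := J(v_n(s)) - J(\phi) \leq J(v_{0,n}) - J(\phi) \to 0
$$
uniformly on $[0, s_n]$. Next I fix $s_0 \in (0, \log 2)$. Lemma \ref{L:e:v-i} combined with the continuity of $R(\cdot)$ on $\mathcal X$ (Lemma \ref{L:infR}) gives a uniform bound $\|v_n(s)\|_{L^\infty(\Omega)} \leq M$ for all $s \geq s_0$ and $n$ large. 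The first sub-task is then to show that $s_n > s_0$ for $n$ large and that $v_n(s_0) \to \phi$ strongly in $H^1_0(\Omega)$. For this, Lemma \ref{L:conti-dep} yields $|v_n|^{m-2}v_n(s_0) \to |\phi|^{m-2}\phi$ in $H^{-1}(\Omega)$; combined with the uniform $H^1_0$-bound from Lemma \ref{L:est1} and Rellich--Kondrachov (valid since $m < 2^*$), the only weak $H^1_0$-cluster point of $v_n(s_0)$ is $\phi$. Strong $L^m$-convergence together with $E_n(s_0) \to 0$ then produces $\|\nabla v_n(s_0)\|_{L^2} \to \|\nabla \phi\|_{L^2}$, upgrading weak to strong $H^1_0$-convergence.

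On $[s_0, s_n]$ I run the standard \L ojasiewicz argument. Writing $\partial_s(|v_n|^{m-2}v_n) = \frac{2(m-1)}{m}\,|v_n|^{(m-2)/2}\,\partial_s(|v_n|^{(m-2)/2}v_n)$ and using the $L^\infty$-bound $M$ yields $\|J'(v_n(s))\|_{H^{-1}(\Omega)} \leq C_M \|\partial_s(|v_n|^{(m-2)/2}v_n)(s)\|_{L^2(\Omega)}$. Combined with the energy inequality \eqref{eq:3.2} and \L ojasiewicz--Simon, the function $H_n := E_n^\theta$ satisfies
$$
- \tfrac{d}{ds} H_n(s) \geq \frac{\theta \mu_m}{\omega C_M} \,\|\partial_s(|v_n|^{(m-2)/2}v_n)(s)\|_{L^2(\Omega)}.
$$
Integrating over $[s_0, s_n]$ and using $E_n(s_0) \to 0$ gives $\int_{s_0}^{s_n} \|\partial_s(|v_n|^{(m-2)/2}v_n)(s)\|_{L^2(\Omega)} \,\d s \to 0$. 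Hence $|v_n|^{(m-2)/2}v_n(s_n) - |v_n|^{(m-2)/2}v_n(s_0) \to 0$ in $L^2(\Omega)$, and since $v_n(s_0) \to \phi$ in $H^1_0(\Omega)$ under the $L^\infty$-bound $M$, I deduce $|v_n|^{(m-2)/2}v_n(s_n) \to |\phi|^{(m-2)/2}\phi$ in $L^2(\Omega)$. Inverting the map, applying dominated convergence with the $L^\infty$-bound, and then repeating at $s = s_n$ the weak-convergence-plus-norm-convergence step of the previous paragraph, I obtain $v_n(s_n) \to \phi$ strongly in $H^1_0(\Omega)$, contradicting $\|v_n(s_n) - \phi\|_{H^1_0(\Omega)} = \varepsilon_0$.

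The main obstacle is bridging the mismatch between Definition \ref{D:stbl}, framed in the energy space $H^1_0(\Omega)$ without any pointwise control, and the \L ojasiewicz--Simon inequality of \cite{FeiSim00}, which requires a uniform $L^\infty$-bound around $\phi$. Lemma \ref{L:e:v-i} supplies such a bound for $s \geq s_0$, but does not cover the short initial interval $[0, s_0]$. The delicate part of the plan is therefore the transfer of strong $H^1_0$-closeness from time $0$ to time $s_0$, forcing me to combine $H^{-1}$ continuous dependence (Lemma \ref{L:conti-dep}), the a priori bound of Lemma \ref{L:est1}, the local minimizer lower bound on $J$, and Rellich compactness (where the subcritical condition $m < 2^*$ is essential) in order to identify every weak $H^1_0$-limit as $\phi$ and to recover norm convergence from convergence of $J$ and of the $L^m$-norm; only once this is in place can the \L ojasiewicz--Simon machinery take over on $[s_0, s_n]$.
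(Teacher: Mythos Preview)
Your proposal is correct and follows essentially the same strategy as the paper: contradiction via first exit times $s_n$, the uniform $L^\infty$-bound from Lemma~\ref{L:e:v-i} to activate the \L ojasiewicz--Simon inequality on $[s_0,s_n]$, and the local-minimizer inequality $J(v_n(s))\geq J(\phi)$ to force $E_n\to 0$ and derive strong $H^1_0$-convergence at $s_n$. Two cosmetic differences: the paper tracks $|v|^{m-2}v$ in $H^{-1}(\Omega)$ (using Tartar's inequality to recover $L^m$-convergence) rather than $|v|^{(m-2)/2}v$ in $L^2(\Omega)$, and for the short-time transfer it tests the difference equation by $\partial_s v_n$ to obtain $v_n\to\phi$ in $L^\infty(0,S;H^1_0(\Omega))$ directly (hence $s_n\to\infty$), whereas your compactness-plus-norm-convergence argument at the single time $s_0$ needs one extra line---apply the same reasoning at $s_n$ along a subsequence with $s_n\leq s_0$ to reach the contradiction, since pointwise convergence at $s_0$ alone does not exclude an earlier exit.
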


One of most crucial points of a proof for Theorem \ref{T:MR2} is how to
control the distance between $\phi$ and the solution $v(s)$ of
\eqref{eq:1.6}--\eqref{eq:1.8} emanating from a small neighborhood of
$\phi$. Here we first exhibit a strategy based on the \L
ojasiewicz-Simon inequality to estimate the distance between $\phi$ and
$v(s)$ before proceeding to a proof.

Let $\phi$ be a local minimizer of $J$ over $\mathcal X$ and let $r_0 >
0$ be such that \eqref{locmin} is satisfied. 
Since every local minimizer of $J$ over $\mathcal X$ is a sign-definite
(nontrivial) solution of \eqref{eq:1.10}, \eqref{eq:1.11} (see Proposition
\ref{P:LM} below), we can assume $\phi \geq 0$ without any loss of
generality. Moreover, by strong maximum principle and elliptic
regularity, one can assure that
\begin{equation}\label{smp}
0 < \phi(x) < L_\phi := \|\phi\|_{L^\infty(\Omega)} + 1 \ \mbox{ for all
 } \ x \in \Omega
\quad \mbox{ and } \quad
\partial_\nu \phi < 0 \ \mbox{ on } \partial \Omega.
\end{equation}
Then the following Feireisl-Simondon version (see~\cite{FeiSim00}) of
the \L ojasiewicz-Simon inequality holds true: 
\begin{lemma}[\L ojasiewicz-Simon inequality~\cite{FeiSim00}]\label{L:LS}
For any $L > L_\phi$, there exist
 constants $\theta \in (0,1/2]$, $\omega , \delta_0 > 0$ such that
\begin{align}\label {LS}
 \left| J(w) - J(\phi) \right|^{1-\theta}
 \leq \omega \left\| J'(w) \right\|_{H^{-1}(\Omega)},
\end{align}
whenever $w \in H^1_0(\Omega)$ satisfies $|w(x)| \leq L$ for a.e.~$x \in
\Omega$ and $\|w - \phi\|_{H^1_0(\Omega)} < \delta_0$.
\end{lemma}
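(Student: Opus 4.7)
The plan is to verify the hypotheses of the abstract Feireisl--Simondon \L ojasiewicz-Simon theorem from~\cite{FeiSim00}, which is tailored for $C^2$ functionals with non-analytic nonlinearities on $L^\infty$-bounded sets. Writing $J(w)=\tfrac12\|\nabla w\|_{L^2(\Omega)}^2-(\lambda_m/m)\int_\Omega|w|^m\,\d x$, the principal structural obstacle is that $s\mapsto|s|^m$ fails to be real analytic at $s=0$ when $m\notin 2\Nz$; the uniform pointwise bound $|w(x)|\leq L$ with $L>L_\phi$, combined with the strict positivity $0<\phi(x)<L_\phi$ and $\partial_\nu\phi<0$ on $\partial\Omega$ recorded in~\eqref{smp}, is precisely what will allow this obstacle to be circumvented.

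First I would collect the regularity ingredients. By elliptic regularity applied to~\eqref{eq:1.10}--\eqref{eq:1.11} we have $\phi\in C^{1,\alpha}(\overline\Omega)$, and Hopf's lemma combined with~\eqref{smp} produces a linear decay estimate $\phi(x)\geq c_0\,\mathrm{dist}(x,\partial\Omega)$ for some $c_0>0$. Next I would analyze the linearization $\mathcal L:=J''(\phi)=-\Delta-\lambda_m(m-1)\phi^{m-2}$. Since $\phi\in L^\infty(\Omega)$, multiplication by $\phi^{m-2}$ is compact from $H^1_0(\Omega)$ into $H^{-1}(\Omega)$ by Rellich, so $\mathcal L$ is a compact perturbation of the isomorphism $-\Delta\colon H^1_0\to H^{-1}$ and hence a self-adjoint Fredholm operator of index zero. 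Set $N:=\ker\mathcal L$ (finite dimensional) and let $P$ denote the associated projection onto $N$. A Lyapunov--Schmidt reduction then applies: for $w=\phi+\xi+\eta$ with $\xi\in N$ small and $\eta\in N^\perp$, the implicit function theorem---valid in the $C^1$ setting, which suffices because $m>2$ makes $w\mapsto|w|^{m-2}w$ of class $C^1$ on $L^\infty$-bounded sets---produces a unique $C^1$-map $\eta=\eta(\xi)$ solving $(I-P)J'(\phi+\xi+\eta)=0$.

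The hard step, and main obstacle, is to derive a finite-dimensional \L ojasiewicz inequality for the reduced functional $G(\xi):=J(\phi+\xi+\eta(\xi))$ on $N$. Classical \L ojasiewicz theory would demand $G$ to be real analytic, which fails because $|\cdot|^m$ is not analytic at $0$. This is where the $L^\infty$-constraint and the geometric data~\eqref{smp} combine: on any compact subset of $\{\phi>0\}$ the integrand $|w|^m=w^m$ is analytic in $w$, while the linear decay of $\phi$ near $\partial\Omega$ together with $|w|\leq L$ controls the contribution of the narrow boundary strip where the sign of $w$ could change. The Feireisl--Simondon argument then promotes this to an inequality of the form $|G(\xi)-G(0)|^{1-\theta}\leq C\|\nabla G(\xi)\|$ on a ball of $N$, with exponent $\theta\in(0,1/2]$ (reducing to $\theta=1/2$ when $\mathcal L$ is invertible, the non-degenerate case). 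Pulling this back through the reduction---using the invertibility of $\mathcal L$ on $N^\perp$ to estimate $\|\eta\|_{H^1_0}\lesssim\|(I-P)J'(w)\|_{H^{-1}}$, together with a second-order Taylor expansion of $J$ transverse to the slow manifold $\{\phi+\xi+\eta(\xi)\}$---produces the stated inequality~\eqref{LS}.
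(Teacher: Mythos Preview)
Your outline is broadly sound, but it takes a different route from the paper. The paper does not reprove the \L ojasiewicz--Simon inequality at all: it simply invokes Proposition~6.1 of~\cite{FeiSim00} as a black box after checking its hypotheses. Concretely, the paper introduces a truncated nonlinearity $F\in C^1(\mathbb R)\cap W^{1,\infty}(\mathbb R)$ with $F(s)=-\lambda_m|s|^{m-2}s$ on $[-M,M]$, $M:=L+1$, notes that $F\in C^\infty(0,M)$, and verifies the analytic-type derivative bound $|F^{(n)}(s)|\leq r^n n!/s^n$ for $s\in(0,M)$ and some $r>0$; this is exactly the structural assumption in~\cite{FeiSim00}. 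The $L^\infty$-constraint $|w|\leq L$ then makes the truncation invisible, and the positivity of $\phi$ is what allows the one-sided analyticity hypothesis on $(0,M)$ to suffice.

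By contrast, you sketch the internal machinery of the Feireisl--Simondon proof itself: Fredholm analysis of $J''(\phi)$, Lyapunov--Schmidt reduction, and a finite-dimensional \L ojasiewicz inequality for the reduced functional. That is a reasonable reconstruction of why the cited result holds, and it correctly isolates the obstruction (non-analyticity of $|s|^m$ at $s=0$) and the cure (positivity of $\phi$ plus Hopf decay). However, at the decisive step you still write ``the Feireisl--Simondon argument then promotes this\ldots'', so you too are ultimately citing~\cite{FeiSim00}, just one layer deeper. The paper's approach is shorter and more transparent about what actually needs checking---namely the truncation to $W^{1,\infty}$ and the factorial derivative bound---whereas yours provides more structural insight but leaves the same hard step unproved.
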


This lemma follows from Proposition 6.1 of~\cite{FeiSim00}, where the \L ojasiewicz-Simon inequality is established for some functional associated with the operator $v \mapsto -\Delta v + F(v)$, by introducing a function  $F \in C^1(\mathbb R) \cap W^{1,\infty}(\mathbb R)$ satisfying $F(s) = - \lambda_m |s|^{m-2}s$ for all $s \in [-M,M]$ with $M := L+1$, $F \in C^\infty(0,M)$ and
$$
|F^{(n)}(s)| \leq \dfrac{r^n n!}{s^n} \quad \mbox{ for all } \ s \in (0,M), \ n \in \mathbb N
$$
for some $r > 0$ (cf.~see also \S 5 of~\cite{FeiSim00}). Furthermore, we remark that the positivity (or negativity) of $\phi$ is essentially required; however, the sign of $w$ is not specified in the proof of Proposition 6.1 of~\cite{FeiSim00}.

Throughout the rest of this section, let $s_0 \in (0,\log 2)$ be fixed.
By Lemma \ref{L:infR}, one can take $C_1 > 0$ such that
$$
R(v_0) \leq (\lambda_m C_m^2)^{1/(m-2)} \|\nabla v_0\|_{L^2(\Omega)}
\leq C_1 \quad \mbox{ for all } \ v_0 \in B_{H^1_0(\Omega)}(\phi
; r_0) \cap \mathcal X.
$$
By Lemma \ref{L:e:v-i}, one can take a constant $L =
L(s_0, C_1, N, m)> 0$ such that for any $v_0 \in
B_{H^1_0(\Omega)}(\phi;r_0) \cap \mathcal X$, the unique solution $v = v(x,s)$ of \eqref{eq:1.6}--\eqref{eq:1.8} satisfies
\begin{equation}\label{UB2}
\|v(s)\|_{L^\infty(\Omega)} \leq L \quad \mbox{ for all } \ s \geq s_0.
\end{equation} 
Here, we particularly took $L$ larger than $L_\phi$.
Then thanks to the \L ojasiewicz-Simon inequality (see Lemma
\ref{L:LS}), there exist constants $\theta \in (0,1/2]$, $\omega,
\delta_0 > 0$ such that for any $v_0 \in B_{H^1_0(\Omega)}(\phi;\delta_0
\wedge r_0) \cap \mathcal X$, the solution $v = v(x,s)$ of
\eqref{eq:1.6}--\eqref{eq:1.8} with the initial data $v_0$ satisfies 
\begin{equation}\label{LS2}
 \left( J(v(s)) - J(\phi) \right)^{1-\theta}
\leq \omega \|J'(v(s))\|_{H^{-1}(\Omega)},
\end{equation}
whenever $\|v(s)-\phi\|_{H^1_0(\Omega)} < \delta_0 \wedge r_0$ and $s \geq s_0$
(hence, \eqref{UB2} is satisfied).
Here we used the fact by \eqref{locmin} that $J(v(s)) - J(\phi) \geq 0$
whenever $v(s) \in B_{H^1_0(\Omega)}(\phi;r_0)$, since $v(s) \in
\mathcal X$ for all $s \geq 0$ (see Proposition \ref{P:X}).

Let $\delta,\delta'$ be real numbers such that
$$
0 < \delta' < \delta < \delta_0 \wedge r_0.
$$
Take any
$$
v_0 \in \mathcal X \cap B_{H^1_0(\Omega)}(\phi ; \delta')
$$
and denote by $v = v(x,s)$ the solution of \eqref{eq:1.6}--\eqref{eq:1.8}
with the initial data $v_0$. Then since $v$ belongs to
 \III $C_+([0,\infty) ; H^1_0(\Omega))$, \EEE one can take $s_{\delta} > 0$ such that
$$
v(s) \in B_{H^1_0(\Omega)}(\phi ; \delta) \quad \mbox{ for all } \ s \in
[0,s_{\delta}).
$$
Furthermore, let us recall that $v(s) \in \mathcal X$ for any $s \geq
0$ and \eqref{UB2} is satisfied. Moreover, suppose that
$$
s_0 < s_\delta.\eqno{\mbox{(A1)}}
$$
Then \eqref{LS2} holds true for all $s \in [s_0,s_\delta)$. 

Define
$$
H(s) := \left( J(v(s)) - J(\phi) \right)^\theta \geq 0
\quad \mbox{ for } \ s \in [0,s_\delta)
$$
and suppose that
$$
J(v(s)) - J(\phi) > 0 \quad \mbox{ for all } \ s \in [0, s_\delta).
\eqno{\mbox{(A2)}}
$$
Then we see that
\begin{align*}
 - \dfrac{\d}{\d s} H(s)
&= - \theta \left( J(v(s)) - J(\phi) \right)^{\theta - 1}
\dfrac{\d}{\d s} J(v(s))
\\
&\geq \mu_m \theta \left( J(v(s)) - J(\phi) \right)^{\theta - 1}
\left\| \partial_s \left( \vmh \right) (s)
 \right\|_{L^2(\Omega)}^2
\quad \mbox{ for a.e. } \, s \in (0,s_\delta).
\end{align*}
Here the last inequality follows from the energy inequality
\eqref{eq:3.2}.

Now, we claim that
\begin{lemma}\label{L:1} It holds that
$$
\left\| \partial_s \left( \vm \right)(s)\right\|_{L^2(\Omega)}^2
\leq \kappa_m \|v(s)\|_{L^\infty(\Omega)}^{m-2}
\left\| \partial_s \left( \vmh \right) (s) \right\|_{L^2(\Omega)}^2
\quad \mbox{ for a.e. } \, s > s_0
$$
with $\kappa_m := 4(m-1)^2/m^2 > 0$.
\end{lemma}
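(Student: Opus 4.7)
The plan is to derive the inequality from a pointwise chain-rule identity relating $\partial_s(|v|^{m-2}v)$ and $\partial_s(|v|^{(m-2)/2}v)$, and then to integrate and use the $L^\infty$ bound from \eqref{UB2}.

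First, I would note that both maps $\Phi(r) := |r|^{m-2}r$ and $\Psi(r) := |r|^{(m-2)/2}r$ belong to $C^1(\mathbb{R})$ under the hypothesis $m > 2$, with derivatives $\Phi'(r) = (m-1)|r|^{m-2}$ and $\Psi'(r) = (m/2)|r|^{(m-2)/2}$, both of which vanish at $r = 0$. In particular, on the set $\{v \neq 0\}$ one has the algebraic identity
\begin{equation*}
\Phi'(r) = \frac{2(m-1)}{m}\,|r|^{(m-2)/2}\,\Psi'(r),
\end{equation*}
and on $\{v = 0\}$ both sides of the desired identity vanish trivially. Hence the chain rule yields the pointwise identity
\begin{equation*}
\partial_s\bigl(|v|^{m-2}v\bigr)(x,s) = \frac{2(m-1)}{m}\,|v(x,s)|^{(m-2)/2}\,\partial_s\bigl(|v|^{(m-2)/2}v\bigr)(x,s)
\end{equation*}
for a.e.\ $(x,s) \in \Omega \times (s_0,\infty)$.

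Second, I would square this identity, estimate $|v(x,s)|^{m-2} \leq \|v(s)\|_{L^\infty(\Omega)}^{m-2}$ uniformly in $x$ (which is finite thanks to \eqref{UB2}), and integrate over $\Omega$ to obtain
\begin{equation*}
\bigl\|\partial_s(|v|^{m-2}v)(s)\bigr\|_{L^2(\Omega)}^2
\leq \frac{4(m-1)^2}{m^2}\,\|v(s)\|_{L^\infty(\Omega)}^{m-2}\,\bigl\|\partial_s(|v|^{(m-2)/2}v)(s)\bigr\|_{L^2(\Omega)}^2,
\end{equation*}
which is precisely the claim with $\kappa_m = 4(m-1)^2/m^2$.

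The main subtlety, and the step I expect to require the most care, is the rigorous justification of the chain-rule identity, because when $m \in (2,3)$ the derivative $|r|^{m-2}$ is merely H\"older continuous at the origin. The cleanest way to handle this is to observe the algebraic relation $(\Psi(v))^2 = v\cdot \Phi(v)$, or equivalently $\Phi(v) = \Psi(v)\cdot |v|^{(m-2)/2}$, which can be differentiated in time using the product and chain rules applied to the $C^1$ function $\Psi$ composed with $v$; the boundedness $\|v(s)\|_{L^\infty(\Omega)} \leq L$ guaranteed by \eqref{UB2} for $s \geq s_0$ confines the arguments to a compact interval on which all nonlinearities are Lipschitz. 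Alternatively, one can approximate $\Phi$ and $\Psi$ by smooth truncations and pass to the limit using the existing regularity $|v|^{m-2}v \in W^{1,\infty}(0,T;H^{-1}(\Omega))$ together with the fact that $\partial_s(|v|^{(m-2)/2}v) \in L^2(\Omega \times (0,T))$ (which is implicit in the energy inequality \eqref{eq:3.2}). Once the pointwise identity is established, the remaining steps are routine.
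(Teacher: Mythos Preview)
Your proposal is correct and follows essentially the same approach as the paper. The paper packages the key pointwise identity by writing $|v|^{m-2}v = \gamma(|v|^{(m-2)/2}v)$ with $\gamma(\sigma) = |\sigma|^{r-2}\sigma$ and $r = (3m-2)/m$, then applies the chain rule to $\gamma$; this yields exactly your identity $\partial_s(|v|^{m-2}v) = \tfrac{2(m-1)}{m}|v|^{(m-2)/2}\partial_s(|v|^{(m-2)/2}v)$ (note $r-1 = 2(m-1)/m$), after which squaring and bounding $|v|^{m-2}$ by $\|v(s)\|_{L^\infty}^{m-2}$ finishes the proof in the same way.
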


\begin{proof}
Set $\gamma(\sigma) := \sigr$ for $\sigma \in \mathbb R$ and determine $r > 1$ such that
$$
\gamma \left( \sigmh \right) = 
 \sigmcomp = \sigm \quad \mbox{
 for } \ \sigma \in \mathbb R.
$$
Then $r = (3m-2)/m > 1$. Hence
\begin{align*}
 \int_\Omega \left| \partial_s \left( \vm  \right)\right|^2 \, \d x
&= \int_\Omega \left| \partial_s  \gamma\left( \vmh \right)\right|^2 \, \d x\\
&= (r-1)^2 \int_\Omega \left|
\left| \vmh \right|^{r-2} \partial_s \left( \vmh \right)
\right|^2 \, \d x\\
&\leq (r-1)^2 \|v(s)\|_{L^\infty(\Omega)}^{m-2}
\left\|
\partial_s \left( \vmh \right)(s)
\right\|_{L^2(\Omega)}^2,
\end{align*}
which completes the proof.
\end{proof}

Recalling that $v(s) \neq 0$ by $0 \not\in\mathcal X$, we find that
\begin{align*}
- \dfrac{\d}{\d s} H(s)
&\geq \dfrac{\mu_m \theta}{\kappa_m} \|v(s)\|_{L^\infty(\Omega)}^{-(m-2)}
 \left( J(v(s)) - J(\phi) \right)^{\theta - 1}
\left\| \partial_s \left( \vm \right)(s) \right\|_{L^2(\Omega)}^2
\end{align*}
for a.e.~$s \in (s_0,s_\delta)$.
Since \eqref{LS2} holds for all $s \in [s_0,s_\delta)$, it follows that
\begin{align}
 - \dfrac{\d}{\d s} H(s)
 &\geq
\dfrac{\mu_m \theta}{\kappa_m\omega} \|v(s)\|_{L^\infty(\Omega)}^{-(m-2)}
\|J'(v(s))\|_{H^{-1}(\Omega)}^{-1}
\left\| \partial_s \left( \vm \right)(s)\right\|_{L^2(\Omega)}^2
\nonumber\\
&\geq
\dfrac{\mu_m \theta}{\kappa_m\omega C_2^2} \left( \sup_{s \geq s_0}
 \|v(s)\|_{L^\infty(\Omega)} \right)^{-(m-2)}
\left\| \partial_s \left( \vm \right)(s)\right\|_{H^{-1}(\Omega)}
\label{1}
\end{align}
for a.e.~$s \in (s_0,s_{\delta})$, by noting that
$$
\left\| \partial_s \left( \vm \right)(s)\right\|_{L^2(\Omega)} \geq
C_2^{-1} \left\| \partial_s \left( \vm \right)(s)\right\|_{H^{-1}(\Omega)}
\stackrel{\eqref{ggf}}{=} C_2^{-1} \left\| J'(v(s)) \right\|_{H^{-1}(\Omega)}
$$
with the best possible constant $C_2 > 0$ of \eqref{eq:1.12} with $m =
2$. Thus we obtain
\begin{align}
\lefteqn{
 \left\| \vm (s) - \phi^{m-1} \right\|_{H^{-1}(\Omega)}
}\nonumber\\
&\leq
 \left\| \vm (s) - \vm (s_0) \right\|_{H^{-1}(\Omega)}
+  \left\| \vm (s_0) - \phi^{m-1} \right\|_{H^{-1}(\Omega)}
\nonumber\\
&\leq
 \int^s_{s_0} \left\| \partial_\sigma
 \left( \vm \right)(\sigma)\right\|_{H^{-1}(\Omega)} \, \d \sigma
+  \left\| \vm (s_0) - \phi^{m-1} \right\|_{H^{-1}(\Omega)}
\nonumber\\
&\stackrel{\eqref{1}}{\leq}
 - \dfrac{\kappa_m\omega C_2^2}{\mu_m \theta} 
 \left( \sup_{s \geq s_0} \|v(s)\|_{L^\infty(\Omega)}^{m-2} \right)
 \left( H(s) - H(s_0) \right)
+  \left\| \vm (s_0) - \phi^{m-1} \right\|_{H^{-1}(\Omega)}
\nonumber\\
&\leq
\dfrac{\kappa_m\omega C_2^2}{\mu_m \theta} 
 \left( \sup_{s \geq s_0} \|v(s)\|_{L^\infty(\Omega)}^{m-2} \right) 
H(s_0)
+  \left\| \vm (s_0) - \phi^{m-1} \right\|_{H^{-1}(\Omega)}
\label{2}
\end{align}
for all $s \in [s_0,s_\delta]$.

Now, we are ready to prove the stability of $\phi$. 
\begin{proof}[Proof of Theorem \ref{T:MR2}]
Suppose on the contrary that there exists $\vep_0
> 0$ such that for all $n \in \mathbb N$, there exist solutions $v_n =
v_n(x,t)$ of \eqref{eq:1.6}--\eqref{eq:1.8} satisfying
$$
v_n(0) \in \mathcal X, \quad \|v_n(0) - \phi\|_{H^1_0(\Omega)} < \dfrac
1 n, \quad \sup_{s \geq 0} \|v_n(s) - \phi\|_{H^1_0(\Omega)} \geq
\vep_0
$$
(see Definition \ref{D:stbl}). Here we note that
\begin{equation}\label{c:v0}
 \vnm (0) \to \phi^{m-1} \quad \mbox{ strongly in }
  H^{-1}(\Omega),
\end{equation}
since the operator $w \mapsto |w|^{m-2}w$ is continuous from $L^m(\Omega)$ to $L^{m'}(\Omega)$ and $H^1_0(\Omega)$ (resp., $L^{m'}(\Omega)$) is continuously embedded in $L^m(\Omega)$ (resp., $H^{-1}(\Omega)$). Set $\vep_1 := (\vep_0 \wedge \delta_0 \wedge r_0)/2 > 0$. Then from the \III right-continuity \EEE of $s \mapsto v_n(s)$ in the strong topology of $H^1_0(\Omega)$ on $[0,\infty)$, for each $n > \vep_1^{-1}$, one can take $s_n > 0$ such that
\begin{equation}\label{ass_cont}
\|v_n(s_n) - \phi\|_{H^1_0(\Omega)} = \vep_1 \quad \mbox{ and } \quad
\|v_n(s) - \phi \|_{H^1_0(\Omega)} < \vep_1 \ \mbox{ for all } s \in [0,s_n).
\end{equation}
 \III Indeed, by the right-continuity of $s \mapsto v_n(s)$ in the strong topology of $H^1_0(\Omega)$, we infer that
$$
s_n := \inf \{s > 0 \colon \|v_n(s)-\phi\|_{H^1_0(\Omega)} \geq \vep_1 \} \in (0,\infty)
$$
(then there exists a sequence $\sigma_k \searrow s_n$ such that $\|v_n(\sigma_k) - \phi\|_{H^1_0(\Omega)} \geq \vep_1$ for all $k$) and
$$
\|v_n(s_n) - \phi\|_{H^1_0(\Omega)} = \lim_{\sigma_k \searrow s_n} \|v_n(\sigma_k) - \phi\|_{H^1_0(\Omega)} \geq \vep_1.
$$
Moreover, $\|v_n(s)-\phi\|_{H^1_0(\Omega)} < \vep_1$ for all $s \in [0,s_n)$. Since $s \mapsto v_n(s)$ is continuous in the weak topology of $H^1_0(\Omega)$, it holds that
$$
\vep_1 \geq \limsup_{s \nearrow s_n} \|v_n(s)-\phi\|_{H^1_0(\Omega)} \geq \liminf_{s \nearrow s_n} \|v_n(s)-\phi\|_{H^1_0(\Omega)} \geq \|v_n(s_n) - \phi\|_{H^1_0(\Omega)}.
$$
Thus we obtain $\|v_n(s_n)-\phi\|_{H^1_0(\Omega)} = \vep_1$. \EEE 
In particular, $(v_n(s_n))$ is bounded in $H^1_0(\Omega)$.

In order to apply \eqref{2}, we shall check the assumptions (A1) and
(A2). We first claim that
\begin{lemma}[Check of (A1)]\label{L34}
 It holds that $s_n \to \infty$ as $n \to \infty$. In particular, $s_n >
 s_0 \in (0,\log 2)$ for $n \in \mathbb N$ large enough.
Moreover, $v_n(s) \to \phi$ strongly in $H^1_0(\Omega)$ at each $s \geq 0$.
\end{lemma}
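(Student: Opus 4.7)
The plan is to argue by contradiction and combine three ingredients already at our disposal: the continuous dependence estimate (Lemma \ref{L:conti-dep}), the uniform $H^1_0$-bound on $\mathcal{X}$ (Lemma \ref{L:est1}), and the monotonicity and continuity of the Rayleigh quotient on $\mathcal{X}$ (Lemma \ref{L:infR}).

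Suppose on the contrary that some subsequence of $(s_n)$ converges to $s_* \in [0,\infty)$. Since $v_n(0) \to \phi$ strongly in $H^1_0(\Omega)$, the continuity of $w \mapsto |w|^{m-2}w$ from $L^m(\Omega)$ into $H^{-1}(\Omega)$ yields $|v_n|^{m-2}v_n(0) \to \phi^{m-1}$ in $H^{-1}(\Omega)$. Applying Lemma \ref{L:conti-dep} with $v_1 = v_n$ and $v_2 \equiv \phi$ (regarded as a stationary solution of \eqref{eq:1.6}, \eqref{eq:1.7}) then gives
$$
\bigl\| |v_n|^{m-2}v_n(s_n) - \phi^{m-1} \bigr\|_{H^{-1}(\Omega)} \longrightarrow 0,
$$
since the prefactor $e^{\lambda_m s_n}$ stays bounded. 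By Lemma \ref{L:est1} and the uniform bounds on $R(v_n(0))$ and $J(v_n(0))$, the sequence $(v_n(s_n))$ is bounded in $H^1_0(\Omega)$; extracting a further subsequence, $v_n(s_n) \rightharpoonup v^*$ weakly in $H^1_0(\Omega)$ and strongly in $L^m(\Omega)$ by the Rellich-Kondrachov compact embedding (recall $m < 2^*$). Passing to the limit in $|v_n|^{m-2}v_n(s_n) \to |v^*|^{m-2}v^*$ in $L^{m'}(\Omega) \hookrightarrow H^{-1}(\Omega)$, we identify $|v^*|^{m-2}v^* = \phi^{m-1}$, whence $v^* = \phi$ by injectivity of $w \mapsto |w|^{m-2}w$.

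The delicate step is upgrading this weak convergence to strong convergence in $H^1_0(\Omega)$, since $\|v_n(s_n) - \phi\|_{H^1_0(\Omega)} = \vep_1 > 0$ must ultimately be contradicted. Here the Rayleigh quotient becomes decisive: monotonicity of $s \mapsto R(v_n(s))$ along the flow combined with continuity of $R$ on $\mathcal{X}$ yields
$$
\limsup_{n \to \infty} R(v_n(s_n)) \leq \lim_{n \to \infty} R(v_n(0)) = R(\phi),
$$
while strong $L^m$-convergence together with weak lower semicontinuity of $\|\cdot\|_{H^1_0(\Omega)}$ gives $\liminf_n R(v_n(s_n)) \geq R(\phi)$. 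Hence $R(v_n(s_n)) \to R(\phi)$, and together with $\|v_n(s_n)\|_{L^m(\Omega)} \to \|\phi\|_{L^m(\Omega)}$ this forces $\|v_n(s_n)\|_{H^1_0(\Omega)} \to \|\phi\|_{H^1_0(\Omega)}$. Combined with weak convergence in the Hilbert space $H^1_0(\Omega)$ we obtain $v_n(s_n) \to \phi$ strongly, the desired contradiction. Thus $s_n \to \infty$, and in particular $s_n > s_0$ for $n$ large enough.

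The pointwise statement $v_n(s) \to \phi$ in $H^1_0(\Omega)$ at each fixed $s \geq 0$ follows by repeating the same three-step scheme (continuous dependence in $H^{-1}$, weak-plus-$L^m$-strong compactness, Rayleigh-quotient sandwich) with $s_n$ replaced by the fixed $s$; a standard subsequence argument then gives whole-sequence convergence. The main technical obstacle throughout is the final norm-upgrade: neither compactness of the embedding nor the $H^{-1}$-continuous-dependence estimate alone suffices, and it is the variational role of $\mathcal{X}$, encoded in the monotonicity and the lower bound of the Rayleigh quotient, that closes the gap.
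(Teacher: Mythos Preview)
Your proof is correct and takes a genuinely different route from the paper's. Both arguments begin from the continuous-dependence lemma to get $|v_n|^{m-2}v_n(s_n)\to\phi^{m-1}$ in $H^{-1}(\Omega)$, but then diverge. The paper uses Tartar's inequality together with the uniform $H^1_0$-bound to obtain $v_n\to\phi$ in $C([0,S];L^m(\Omega))$, and then upgrades to $H^1_0$ by testing the subtracted equation against $\partial_s v_n$ to derive an energy-type estimate for $\|\nabla v_n(s)-\nabla\phi\|_{L^2}^2$; this gives convergence in $L^\infty(0,S;H^1_0(\Omega))$, stronger than what the lemma states. Your Rayleigh-quotient sandwich is lighter: it avoids the energy-testing computation entirely and relies only on the monotonicity of $R(v_n(\cdot))$ and weak lower semicontinuity of the $H^1_0$-norm, both already packaged in the preliminary lemmas. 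In fact the paper uses exactly this Rayleigh sandwich later, in the proof of Proposition~\ref{P:loc_min}, so your argument is very much in its spirit. The trade-off is that your method yields only pointwise-in-$s$ convergence along subsequences (resolved by the standard subsequence principle), which is however all that is needed here.
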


\begin{proof}
Indeed, suppose on the contrary that a subsequence $(s_{n'})$ of $(s_n)$ is
 bounded, i.e., $S := \sup \{s_{n'} \colon n' \in \mathbb N \} <
 \infty$. From now on, we simply write $n$ instead of $n'$. Since
 $v_{n}(0) \to \phi$ strongly in $H^1_0(\Omega)$ and $\phi$ is a
 stationary solution, by Lemma \ref{L:conti-dep}, one can prove that
 $\vnm \to \phi^{m-1}$ strongly in $C([0,S];H^{-1}(\Omega))$. Moreover,
 by Tartar's inequality,
$$
\omega_m |a-b|^m \leq \left(|a|^{m-2}a - |b|^{m-2}b\right) \left(a-b\right)
\quad \mbox{ for all } \ a,b \in \mathbb R,
$$
for some constant $\omega_m > 0$, we find that
\begin{align}
\omega_m  \left\| v_n(s) - \phi \right\|_{L^m(\Omega)}^m
&\leq \int_\Omega \left( \vnm (s) - \phi^{m-1} \right) \left(v_n(s) - \phi \right) \, \d x\nonumber\\
&\leq \left\| \vnm (s) - \phi^{m-1}
 \right\|_{H^{-1}(\Omega)}
\left\| v_n(s) - \phi \right\|_{H^1_0(\Omega)} \label{tartar}.
\end{align}
From the boundedness of $(v_n)$ in
 $L^\infty(0,S;H^1_0(\Omega))$ (by Lemma \ref{L:est1} and the boundedness of
 $J(v_n(0))$ and $R(v_n(0))$)
 along with the convergence of $\vnm$ in $C([0,S];H^{-1}(\Omega))$, it
 follows that
\begin{equation}\label{c:vn-Lm:C}
v_n \to \phi \quad \mbox{ strongly in } C([0,S];L^m(\Omega)).
\end{equation}
By subtraction of equations, we have
$$
\partial_s \left( \vnm \right)(s)
- \Delta (v_n(s) - \phi) 
= \lambda_m \left( \vnm (s) - \phi^{m-1} \right)
\ \mbox{ in } H^{-1}(\Omega), \quad s > 0.
$$
Let us formally test it by $\partial_s v_n(s) = \partial_s
 (v_n(s)-\phi)$ to get
\begin{align*}
\lefteqn{
 \mu_m \left\|\partial_s
 \left(\vnmh \right)(s)\right\|_{L^2(\Omega)}^2
 + \dfrac 1 2 \dfrac{\d}{\d s} \|\nabla v_n(s) - \nabla \phi\|_{L^2(\Omega)}^2
}\\
 &\leq \lambda_m \int_\Omega
\left( \vnm (s) - \phi^{m-1} \right) \partial_s v_n(s) \,
 \d x\\
 &= \dfrac{\d}{\d s}
 \left(
 \dfrac{\lambda_m}m \|v_n(s)\|_{L^m(\Omega)}^m
 - \lambda_m \int_\Omega \phi^{m-1} \; v_n(s) \, \d x
 \right).
\end{align*}
The integration of both sides over $(0,s)$ leads us to see that
\begin{align*}
\lefteqn{
 \mu_m \int^s_0 \left\|\partial_\sigma
 \left( \vnmh \right)(\sigma)\right\|_{L^2(\Omega)}^2 \,\d \sigma
 + \dfrac 1 2 \|\nabla v_n(s) - \nabla \phi\|_{L^2(\Omega)}^2
}\\
 &\leq \dfrac 1 2 \|\nabla v_n(0) - \nabla \phi\|_{L^2(\Omega)}^2
 + \dfrac{\lambda_m}m \|v_n(s)\|_{L^m(\Omega)}^m
 - \dfrac{\lambda_m}m \|v_n(0)\|_{L^m(\Omega)}^m\\
&\quad - \lambda_m \int_\Omega \phi^{m-1}  \left( v_n(s) -
 v_n(0) \right) \, \d x,
 \end{align*}
which can be rigorously derived as in~\cite{G:EnSol}. Thus by
 virtue of \eqref{c:vn-Lm:C} one obtains
\begin{align*}
\lefteqn{
\dfrac 1 2 \sup_{s \in [0,S]} \|\nabla v_n(s) - \nabla \phi\|_{L^2(\Omega)}^2
}\\
 &\leq
\dfrac 1 2 \|\nabla v_n(0) - \nabla \phi\|_{L^2(\Omega)}^2
 + \dfrac{\lambda_m}m \sup_{s \in [0,S]} \left|
 \|v_n(s)\|_{L^m(\Omega)}^m - \|v_n(0)\|_{L^m(\Omega)}^m \right|\\
&\quad + \lambda_m \|\phi\|_{L^m(\Omega)}^{m-1} 
\sup_{s \in [0,S]}\| v_n(s) - v_n(0) \|_{L^m(\Omega)} \to 0.
\end{align*}
Therefore $v_n \to \phi$ strongly in $L^\infty(0,S;H^1_0(\Omega))$; in
 particular, we see that
$$
\|v_n(s_n) - \phi\|_{H^1_0(\Omega)} \leq
\sup_{s \in [0,S]} \|v_n(s) - \phi\|_{H^1_0(\Omega)} \to 0,
$$
which contradicts the fact that $\|v_n(s_n) -
\phi\|_{H^1_0(\Omega)} = \vep_1 > 0$. Hence $s_n$ diverges to $\infty$.

Moreover, repeating the argument above, one can also verify that
\begin{equation}\label{vn1-conv}
\sup_{s \in [0,S]} \| v_{n}(s) - \phi \|_{H^1_0(\Omega)} \to 0
\quad \mbox{ for any fixed } \ S > 0. 
\end{equation}
Thus we have proved the lemma.
\end{proof}

We next see that 
\begin{lemma}[Check of (A2)]\label{L32}
It holds that $J(v_n(s)) - J(\phi) > 0$ for all $s \in [0,s_n)$.
\end{lemma}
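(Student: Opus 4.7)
My plan is to argue by contradiction, combining the local minimality of $\phi$ with the Lyapunov monotonicity of $J$ along the flow and the strict time-monotonicity of $|v|^{(m-2)/2}v$ off the stationary set.

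First, I would observe that for every $s \in [0, s_n)$, Proposition \ref{P:X}(i) gives $v_n(s) \in \mathcal X$, while the defining property of $s_n$ in \eqref{ass_cont} yields $\|v_n(s) - \phi\|_{H^1_0(\Omega)} < \vep_1 \leq r_0$. Hence $v_n(s) \in \mathcal X \cap B_{H^1_0(\Omega)}(\phi;r_0)$, and the local minimality \eqref{locmin} of $\phi$ immediately gives the weak inequality $J(v_n(s)) - J(\phi) \geq 0$ on $[0,s_n)$. It remains to upgrade this to strict inequality.

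Suppose, toward a contradiction, that there exists $s_1 \in [0, s_n)$ with $J(v_n(s_1)) = J(\phi)$. Since $s \mapsto J(v_n(s))$ is non-increasing by \eqref{eq:3.2} and bounded below by $J(\phi)$ on $[0, s_n)$, it is then pinned to its minimum throughout $[s_1, s_n)$, i.e., $J(v_n(s)) \equiv J(\phi)$ on that interval. Integrating \eqref{eq:3.2} over $[s_1, s]$ for $s \in (s_1, s_n)$, I would obtain
$$
\mu_m \int^s_{s_1} \left\| \partial_\sigma \bigl( |v_n|^{(m-2)/2} v_n \bigr)(\sigma) \right\|_{L^2(\Omega)}^2 \, \d \sigma \leq J(v_n(s_1)) - J(v_n(s)) = 0,
$$
so that $\partial_\sigma (|v_n|^{(m-2)/2} v_n) \equiv 0$ a.e.~on $(s_1,s_n) \times \Omega$. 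Since $\sigma \mapsto |\sigma|^{(m-2)/2}\sigma$ is a strictly increasing homeomorphism of $\mathbb R$, this forces $v_n(s) = v_n(s_1)$ a.e.~in $\Omega$ for every $s \in [s_1, s_n)$.

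Finally, to reach a contradiction I would pass to the limit $s \nearrow s_n$ using the weak continuity $v_n \in C_w([0,\infty); H^1_0(\Omega))$ guaranteed by \eqref{apdx1}, together with weak lower semicontinuity of the norm:
$$
\|v_n(s_n) - \phi\|_{H^1_0(\Omega)} \leq \liminf_{s \nearrow s_n} \|v_n(s) - \phi\|_{H^1_0(\Omega)} = \|v_n(s_1) - \phi\|_{H^1_0(\Omega)} < \vep_1,
$$
which contradicts the defining equality $\|v_n(s_n) - \phi\|_{H^1_0(\Omega)} = \vep_1$ from \eqref{ass_cont}. The only nontrivial step is the deduction that vanishing of $\partial_s(|v_n|^{(m-2)/2} v_n)$ forces $v_n(\cdot)$ to be constant in time, which rests on the strict monotonicity of $\sigma \mapsto |\sigma|^{(m-2)/2}\sigma$; the remaining ingredients (local minimality, Lyapunov monotonicity, and weak continuity of the flow) are already at our disposal.
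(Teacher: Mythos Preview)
Your proof is correct and follows essentially the same contradiction argument as the paper: assume equality at some time, use the energy inequality \eqref{eq:3.2} together with the local minimality \eqref{locmin} to force $\partial_s(|v_n|^{(m-2)/2}v_n)\equiv 0$, and conclude $v_n$ is constant in time, contradicting \eqref{ass_cont}. The only minor difference is at the endpoint: the paper integrates \eqref{eq:3.2} all the way to $s_n$ and uses $J(v_n(s_n))\geq J(\phi)$ directly (valid since $\vep_1<r_0$), obtaining $v_n(s_n)=v_n(s_{0,n})$ without any limiting argument, whereas you stop short of $s_n$ and invoke weak continuity \eqref{apdx1} to pass to the limit.
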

\begin{proof}
 Suppose on the contrary that $J(v_n(s_{0,n})) = J(\phi)$ for some
 $s_{0,n} \in [0,s_n)$. Then by \eqref{eq:3.2},
$$
\mu_m \int^{s_n}_{s_{0,n}} \left\|\partial_\sigma \left( \vnmh
 \right) (\sigma)\right\|_{L^2(\Omega)}^2 \, \d \sigma + J(v_n(s_n))
\leq J(v_n(s_{0,n})) = J(\phi),
$$
which along with the fact by \eqref{locmin} that $J(v_n(s_n)) \geq
 J(\phi)$ implies $\partial_s ( \vnmh ) \equiv 0$ a.e.~on $\Omega
 \times (s_{0,n}, s_n)$. Hence $v_n(s) =
 v_n(s_{0,n})$ for all $s \in [s_{0,n}, s_n]$. On the other hand,
 from the fact that $s_{0,n} < s_n$, one has $\|v_n(s_{0,n}) -
 \phi\|_{H^1_0(\Omega)} < \vep_1$. Combining these facts, we
 particularly obtain $\|v_n(s_n) - \phi\|_{H^1_0(\Omega)} < \vep_1$,
 which is a contradiction to the definition of $s_n$. Thus $J(v_n(s)) -
 J(\phi) > 0$ for all $s \in [0,s_n)$.
\end{proof}


Since $v_n(0) \in B_{H^1_0(\Omega)}(\phi;r_0) \cap \mathcal X$, by
\eqref{UB2} we see that
\begin{equation}\label{e:vn-i}
\sup_{s \geq s_0} \|v_n(s)\|_{L^\infty(\Omega)} \leq L
\quad \mbox{ for all } \ n \in \mathbb N.
\end{equation}
By taking $n \in \mathbb N$ so large that $s_n > s_0$ (see Lemma
\ref{L34}) and using Lemma \ref{L32}, one can employ \eqref{2} to obtain
\begin{align*}
 \left\| \vnm (s_n) - \phi^{m-1} \right\|_{H^{-1}(\Omega)}
&\leq
\dfrac{\kappa_m\omega C_2^2}{\mu_m \theta} 
 \left( \sup_{s \geq s_0} \|v_n(s)\|_{L^\infty(\Omega)}^{m-2} \right) 
 \left( J(v_n(s_0)) - J(\phi) \right)^{\theta}\\
&\quad +  \left\| \vnm(s_0) - \phi^{m-1}
 \right\|_{H^{-1}(\Omega)},
\end{align*}
which together with \eqref{e:vn-i} gives
\begin{align*}
 \left\| \vnm(s_n) - \phi^{m-1} \right\|_{H^{-1}(\Omega)}
\leq
 C\left( J(v_n(s_0)) - J(\phi) \right)^{\theta}
+  \left\| \vnm(s_0) - \phi^{m-1}
 \right\|_{H^{-1}(\Omega)}
\end{align*}
for some constant $C \geq 0$ independent of $n$. Hence, by 
\eqref{vn1-conv}, we deduce that
$$
\left\| \vnm(s_n) - \phi^{m-1} \right\|_{H^{-1}(\Omega)}
\to 0.
$$
As in \eqref{tartar} by Tartar's inequality, it follows that
\begin{equation}\label{c:vn:Lm}
v_n(s_n) \to \phi \quad \mbox{ strongly in } L^m(\Omega).
\end{equation}
Since $(v_n(s_n))$ is bounded in $H^1_0(\Omega)$ by \eqref{ass_cont},
up to a subsequence, $v_n(s_n) \to \phi$ weakly in $H^1_0(\Omega)$.

Furthermore, we deduce that \III 
\begin{align*}
 \dfrac 1 2 \|v_n(s_n)\|_{H^1_0(\Omega)}^2
 &= J(v_n(s_n)) + \dfrac {\lambda_m} m \|v_n(s_n)\|_{L^m(\Omega)}^m\\
 &\leq J(v_n(0)) + \dfrac {\lambda_m} m \|v_n(s_n)\|_{L^m(\Omega)}^m\\
 &\stackrel{\eqref{c:vn:Lm}}{\to} J(\phi) + \dfrac {\lambda_m} m
 \|\phi\|_{L^m(\Omega)}^m
 = \dfrac 1 2 \|\phi\|_{H^1_0(\Omega)}^2
\end{align*}
from the fact that $\|v_n(0)-\phi\|_{H^1_0(\Omega)} <
 1/n$ as well as the non-increase of the energy $J(v_n(\cdot))$. \EEE 
Due to the uniform convexity of $H^1_0(\Omega)$, we also obtain
$$
v_n (s_n) \to \phi \quad \mbox{ strongly in } H^1_0(\Omega).
$$
However, it contradicts the definition of $s_n$, i.e.,
$\|v_n(s_n)-\phi\|_{H^1_0(\Omega)} = \vep_1 > 0$.
Consequently, we conclude that $\phi$ is stable.
\end{proof}

\section{Local minimizers of $J$ over $\mathcal X$}\label{S:lm}

In this section, we are concerned with \emph{local
minimizers of $J$ over the set $\mathcal X$}.
Let us start with the following proposition, which was already used in
Section \ref{S:M}.
\begin{proposition}\label{P:LM}
 Let $\phi$ satisfy \eqref{locmin}. Then $\phi$ is a positive or
 negative solution of \eqref{eq:1.10}, \eqref{eq:1.11}.
\end{proposition}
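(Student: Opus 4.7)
The plan is to run the rescaled flow starting from $\phi$ and exploit the fact that $\mathcal{X}$ is invariant while $J$ is a Lyapunov functional. Specifically, let $v$ denote the unique solution of \eqref{eq:1.6}--\eqref{eq:1.8} with $v(0)=\phi$. By Proposition \ref{P:X}(i), $v(s)\in\mathcal{X}$ for every $s\ge 0$. By \eqref{apdx1}, $s\mapsto v(s)$ is right-continuous into $H^1_0(\Omega)$, so there exists $s^*>0$ such that $v(s)\in B_{H^1_0(\Omega)}(\phi;r_0)\cap\mathcal{X}$ for all $s\in[0,s^*]$.

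Next I would combine the two opposing monotonicity facts. On one hand, for $s\in[0,s^*]$, the defining property \eqref{locmin} gives $J(v(s))\ge J(\phi)$. On the other hand, the energy inequality \eqref{eq:3.2} integrated on $(0,s)$ yields
\begin{equation*}
\mu_m\int_0^s\bigl\|\partial_\sigma(|v|^{(m-2)/2}v)(\sigma)\bigr\|_{L^2(\Omega)}^2\,\d\sigma + J(v(s))\le J(v(0))=J(\phi).
\end{equation*}
Hence $J(v(s))=J(\phi)$ for all $s\in[0,s^*]$, and the dissipation term vanishes: $\partial_\sigma(|v|^{(m-2)/2}v)\equiv 0$ a.e.\ on $\Omega\times(0,s^*)$. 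Since $w\mapsto |w|^{(m-2)/2}w$ is a strictly monotone bijection on $\mathbb{R}$, this forces $v(s)\equiv v(0)=\phi$ on $[0,s^*]$.

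From $\partial_s(|v|^{m-2}v)\equiv 0$ on $(0,s^*)$ and the gradient flow formulation \eqref{ggf}, we conclude $J'(\phi)=0$ in $H^{-1}(\Omega)$, that is, $\phi$ is a weak solution of \eqref{eq:1.10}, \eqref{eq:1.11}. Since $\phi\in\mathcal{X}$ and $0\notin\mathcal{X}$, $\phi$ is nontrivial; by elliptic regularity, $\phi$ is classical, and the strong maximum principle applied to the equation $-\Delta\phi=\lambda_m|\phi|^{m-2}\phi$ implies that $\phi$ cannot change sign and cannot vanish inside $\Omega$. Thus $\phi$ is either strictly positive or strictly negative in $\Omega$.

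The only slightly delicate point is confirming that the right-continuity of the flow in $H^1_0(\Omega)$ is strong enough to keep $v(s)$ inside the ball $B_{H^1_0(\Omega)}(\phi;r_0)$ on some initial interval; this is exactly the content of \eqref{apdx1}, so once that is invoked the remainder of the argument is a straightforward application of the Lyapunov property together with strict monotonicity of the nonlinearity.
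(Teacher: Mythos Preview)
Your argument that $\phi$ is a stationary solution of \eqref{eq:1.10}, \eqref{eq:1.11} is correct and matches the paper's approach: run the flow from $\phi$, use right-continuity in $H^1_0(\Omega)$ to stay in the ball, and combine the energy inequality \eqref{eq:3.2} with the local minimality \eqref{locmin} to force the dissipation to vanish.

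The gap is in the sign-definiteness step. Your claim that ``the strong maximum principle applied to the equation $-\Delta\phi=\lambda_m|\phi|^{m-2}\phi$ implies that $\phi$ cannot change sign'' is false: the Emden--Fowler equation \eqref{eq:1.10}, \eqref{eq:1.11} has plenty of sign-changing solutions (nodal solutions), and the strong maximum principle only says that a \emph{nonnegative} solution is strictly positive or identically zero. At this point in the argument you have only shown that $\phi$ is a nontrivial critical point of $J$, and critical points need not be sign-definite. You have not yet exploited the \emph{local minimality} of $\phi$ over $\mathcal X$ to exclude sign changes, and that is precisely what is needed.

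The paper closes this gap as follows. Suppose $\phi$ changes sign and let $D\subsetneq\Omega$ be a nodal domain of $\phi$. Define $\phi_\mu$ by multiplying $\phi$ by $\mu$ on $D$ and leaving it unchanged elsewhere. A direct computation (as in \cite[Proof of Theorem 3]{AK09}) shows $J(c\phi_\mu)<J(\phi)$ for every $\mu\ge 0$, $\mu\neq 1$, and every $c\ge 0$. Projecting onto $\mathcal X$ via $v_{0,\mu}:=t_*(\phi_\mu)^{-1/(m-2)}\phi_\mu$, one then has $v_{0,\mu}\in\mathcal X$, $J(v_{0,\mu})<J(\phi)$, and (using the continuity of $t_*$ and $t_*(\phi)=1$) $v_{0,\mu}\to\phi$ in $H^1_0(\Omega)$ as $\mu\to 1$. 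This contradicts \eqref{locmin}. Only after $\phi$ is known to be nonnegative (or nonpositive) does the strong maximum principle upgrade this to strict positivity (or negativity).
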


\begin{proof}
 Let $v = v(x,s)$ be the solution of \eqref{eq:1.6}--\eqref{eq:1.8} with
 the initial data $v(0) = \phi$. Due to the \III right-continuity \EEE of $s \mapsto v(s)$
 in $H^1_0(\Omega)$, one can take $s_* \in (0,\infty]$ such that $v(s)
 \in B_{H^1_0(\Omega)}(\phi;r_0)$ for all $s \in [0,s_*)$.
 Then from \eqref{eq:3.2} along with the fact that
$$
J(\phi) = \inf \{J(w) \colon w \in \mathcal X \cap
B_{H^1_0(\Omega)}(\phi;r_0)\} \leq J(v(s)) \leq J(v(0)) = J(\phi)
\quad \mbox{ for all } \ s \in [0,s_*),
$$
it follows that
$$
\int^s_0 \left\| \partial_\sigma \left( |v|^{(m-2)/2}v
 \right)(\sigma)\right\|_{L^2(\Omega)}^2 \, \d \sigma = 0 \quad \mbox{ for all } \ s
 \in [0,s_*),
$$
which implies $v(s) = v(0) = \phi$ for all $s \in [0,s_*]$. Hence we have $s_* =
 \infty$ and $v(s) \equiv \phi$. Therefore $\phi$ solves
 \eqref{eq:1.10}, \eqref{eq:1.11}. We next prove the positivity (or
 negativity) of $\phi$. Suppose on the contrary that $\phi$ is
 sign-changing. Then let $D \subsetneq \Omega$ be a nodal domain of $\phi$,
 a connected component of the set $\{x \in \Omega \colon \phi(x) \neq
 0\}$. As in~\cite[Proof of Theorem 3]{AK09}, one can define
$$
\phi_\mu(x) := \begin{cases}
		\mu \phi(x) &\mbox{ if } x \in D,\\
		\phi(x) &\mbox{ if } x \not\in D
	       \end{cases}
$$
and observe that $J(c \phi_\mu) < J(\phi)$ for any $\mu \geq 0$, $\mu
 \neq 1$ and any $c \geq 0$. Hence put $v_{0,\mu} :=
 t_*(\phi_\mu)^{-1/(m-2)} \phi_\mu \in \mathcal X$. Then
$$
 J(v_{0,\mu}) < J(\phi) \quad \mbox{ for any } \ \mu \geq 0, \ \neq 1.
$$
On the other hand, from the continuity of $t_*
 : H^1_0(\Omega) \to [0,\infty)$ (see~\cite[Proposition 4]{AK09}) and
 the fact that $t_*(\phi) = 1$ by $\phi \in \mathcal X$, one deduces
 that
$$
v_{0,\mu} \to \phi \quad \mbox{ strongly in } H^1_0(\Omega),
$$ 
whence $v_{0,\mu}$ belongs to $B_{H^1_0(\Omega)}(\phi;r_0)$ for
 $\mu$ sufficiently close to $1$. However, these facts contradict the
 local minimality of $J$ at $\phi$ over $\mathcal X$. Therefore $\phi$
 turns out to be nonnegative or nonpositive. Finally, by strong maximum principle, $\phi$ is positive or negative in $\Omega$.
\end{proof}

\begin{remark}
{\rm
In the annular domain case, $\Omega := \{x \in \mathbb R^N \colon 
a < |x| < b\}$ for $0 < a < b < \infty$, as in~\cite[Proposition
 5.3]{AK12}, one may also prove that every local minimizer of $J$ over
 $\mathcal X$ is not radially symmetric under some quantitative
 assumption on the thickness of the annulus. Indeed, suppose on the
 contrary that a local minimizer $\phi$ is radially symmetric. Then one
 can construct $v_{0,\mu} \in \mathcal X$ such that $J(v_{0,\mu}) <
 J(\phi)$ for any $0 < \mu \ll 1$ and $v_{0,\mu} \to \phi$ strongly in
 $H^1_0(\Omega)$ as $\mu \to 0$, if $a$ and $b$ satisfy
 $(b/a)^{(N-3)_+}((b-a)/(\pi a))^2 < (m-2)/(N-1)$. Hence these facts
 yield a contradiction.
}
\end{remark}

Let us next discuss the relation of local minimizers of $J$ over the
so-called \emph{Nehari manifold},
$$
\mathcal N := \{w \in H^1_0(\Omega) \setminus \{0\} \colon \|\nabla
 w\|_{L^2(\Omega)}^2 = \lambda_m \|w\|_{L^m(\Omega)}^m \},
$$
and those over $\mathcal X$. Emden-Fowler equation
\eqref{eq:1.10}, \eqref{eq:1.11} has been well studied in variational
analysis, where nontrivial solutions are often characterized as global
or local minimizers of the functional $J$ over $\mathcal N$.
However, the phase set $\mathcal X$
is different from $\mathcal N$, and their intersection is just $\mathcal S$
(see~\cite[Proposition 10]{AK09}). Hence it is unclear whether or not
every local minimizer of $J$ over $\mathcal N$ also locally minimizes
$J$ over $\mathcal X$. The following proposition gives an affirmative
answer to this question for \emph{isolated} local minimizers over
$\mathcal N$.

\begin{proposition}\label{P:loc_min}
 Let $\phi$ be an \emph{isolated} local minimizer of $J$ over $\mathcal N$,
 that is, there exists $r_0 > 0$ such that 
\begin{equation}\label{lm:hypo}
J(\phi) < J(w) \quad \mbox{ for all } \ w \in \left( \mathcal N \cap
 B_{H^1_0(\Omega)}(\phi;r_0) \right)\setminus \{\phi\}.
\end{equation} 
Then $\phi$ is also a local minimizer of $J$ over $\mathcal X$.
\end{proposition}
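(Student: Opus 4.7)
The plan is to argue by contradiction via the Nehari projection. Suppose $\phi$ were not a local minimizer of $J$ over $\mathcal X$: then there would exist a sequence $(w_n)\subset\mathcal X$ with $w_n\to\phi$ in $H^1_0(\Omega)$ and $J(w_n)<J(\phi)$ (in particular $w_n\neq\phi$).

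First I would introduce the continuous Nehari projection $\pi\colon H^1_0(\Omega)\setminus\{0\}\to\mathcal N$, $\pi(w):=c(w)\,w$, where $c(w):=\bigl(\|\nabla w\|_{L^2}^2/(\lambda_m\|w\|_{L^m}^m)\bigr)^{1/(m-2)}$. Along each ray $\{tw\}_{t>0}$ the map $t\mapsto J(tw)$ is uniquely maximized at $t=c(w)$, so $J(w)\leq J(\pi(w))$, with equality iff $c(w)=1$. The proof of Lemma~\ref{L:infR} gives $c(w)\geq 1$ for $w\in\mathcal X$, with equality iff $w\in\mathcal S$. Setting $\tilde w_n:=\pi(w_n)$, continuity of $\pi$ together with $\pi(\phi)=\phi$ yield $\tilde w_n\to\phi$ in $H^1_0(\Omega)$. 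A short scaling argument rules out both $w_n\in\mathcal S$ (else $w_n\in\mathcal N\cap B_{H^1_0(\Omega)}(\phi;r_0)\setminus\{\phi\}$ and isolation forces $J(w_n)>J(\phi)$) and $\tilde w_n=\phi$ (which would force $w_n$ to be a positive scalar multiple of $\phi$, contradicting $t_*(w_n)=1$ in view of $t_*(\lambda\phi)=\lambda^{m-2}$). Hence for $n$ large, $\tilde w_n\in\mathcal N\cap B_{H^1_0(\Omega)}(\phi;r_0)\setminus\{\phi\}$, and the isolation hypothesis yields $J(\tilde w_n)>J(\phi)$.

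The crucial step is to control the energy gap $J(\tilde w_n)-J(w_n)$. Direct integration along the ray gives
\[
J(\tilde w_n)-J(w_n) = \tfrac{(m-2)\lambda_m}{2}\|w_n\|_{L^m}^m\,(c(w_n)-1)^2 + O\!\bigl((c(w_n)-1)^3\bigr).
\]
The key geometric observation is that the tangent spaces coincide: $T_\phi\mathcal X = T_\phi\mathcal N$. Indeed, a first-order expansion yields $Dc(\phi)\cdot h = -\lambda_m\|\nabla\phi\|_{L^2}^{-2}\int_\Omega\phi^{m-1}h\,\d x$, whose kernel is precisely $T_\phi\mathcal N$; conversely, since $c|_{\mathcal X}$ attains its minimum value $1$ at $\phi$, $Dc(\phi)$ must vanish on $T_\phi\mathcal X$, so $T_\phi\mathcal X\subseteq T_\phi\mathcal N$, and a codimension count forces equality. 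Consequently, for $w\in\mathcal X$ close to $\phi$, one has $c(w)-1 = O(\|w-\phi\|_{H^1_0(\Omega)}^2)$, and therefore $J(\tilde w_n)-J(w_n) = O(\|w_n-\phi\|_{H^1_0(\Omega)}^4)$.

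It remains to compare this quartic discrepancy with the positive gap $J(\tilde w_n)-J(\phi)$. In the nondegenerate regime where $J''(\phi)$ is coercive on $T_\phi\mathcal N$, Taylor expansion (using $J'(\phi)=0$ and $\tilde w_n-\phi = (w_n-\phi)+O(\|w_n-\phi\|^2)$) yields $J(\tilde w_n)-J(\phi)\geq c\|w_n-\phi\|^2$, which dominates the quartic error for $n$ large and produces $J(w_n)\geq J(\phi)$, contradicting $J(w_n)<J(\phi)$. The main obstacle is the potentially degenerate case in which $J''(\phi)|_{T_\phi\mathcal N}$ has nontrivial kernel; strict isolation alone need not guarantee quadratic growth. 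I expect this to be handled by invoking the \L ojasiewicz--Simon inequality (Lemma~\ref{L:LS}) applied to $\tilde w_n$, where the requisite $L^\infty$-bound is obtained from Lemma~\ref{L:e:v-i} after rescaling $\tilde w_n$ onto $\mathcal X$, in order to produce a quantitative polynomial lower bound on $J(\tilde w_n)-J(\phi)$ that still dominates the quartic discrepancy and closes the contradiction.
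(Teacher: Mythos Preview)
Your approach has two genuine gaps.

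First, the tangent-space argument hinges on differentiable structure of $\mathcal X$ that is not available. You argue that since $c|_{\mathcal X}$ attains its minimum $1$ at $\phi$, the differential $Dc(\phi)$ vanishes on $T_\phi\mathcal X$, and then conclude $c(w)-1 = O(\|w-\phi\|_{H^1_0}^2)$ for $w\in\mathcal X$ near $\phi$. Both steps require $\mathcal X$ to be at least a $C^1$ submanifold near $\phi$: the first to make sense of $T_\phi\mathcal X$ and invoke first-order optimality, the second to replace the chord $w-\phi$ by a tangent vector up to second order. But $\mathcal X$ is defined through the extinction-time functional $t_*$, for which only continuity is known; the paper explicitly notes that ``even the smoothness of $\mathcal X$ is still unclear''. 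Your quartic estimate on $J(\tilde w_n)-J(w_n)$ is therefore unjustified, and the codimension count is likewise formal. Second, the proposed rescue in the degenerate case via \L ojasiewicz--Simon does not work: Lemma~\ref{L:LS} gives an \emph{upper} bound $|J(w)-J(\phi)|^{1-\theta}\leq\omega\|J'(w)\|_{H^{-1}}$, not a lower bound on $J(\tilde w_n)-J(\phi)$ in terms of $\|\tilde w_n-\phi\|$; indeed such a lower bound is false near a non-isolated critical point. Moreover, Lemma~\ref{L:e:v-i} yields $L^\infty$ bounds only for solutions of the evolution after positive time, not for arbitrary elements of $\mathcal X$ or $\mathcal N$, so the hypothesis of Lemma~\ref{L:LS} for $\tilde w_n$ is not immediate either.

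The paper's route avoids smoothness of $\mathcal X$ entirely. Via Lemma~\ref{L:loc_min} it first translates the hypothesis into (non-strict) local minimality of the Rayleigh quotient $R$ at $\phi$ over $\mathcal X$; it then flows the contradicting sequence forward by \eqref{eq:1.6}--\eqref{eq:1.8}, uses the monotone decrease of $R$ along the flow together with weak compactness to extract a strong $H^1_0$-limit $z\in\mathcal X$ at fixed positive distance $\varepsilon$ from $\phi$ with $R(z)=R(\phi)$, and finally projects $z$ onto $\mathcal N$ to obtain $n(z)z\neq\phi$ with $J(n(z)z)=J(\phi)$, contradicting the strict isolation. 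No differential-geometric information about $\mathcal X$ is used.
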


Before proving the proposition above, we note that:

\begin{lemma}\label{L:loc_min}
For each $\phi \in \mathcal S$ the following conditions are
 equivalent\/{\rm :}
\begin{enumerate}
 \item 
       There exists $r_0 > 0$ such that
       $J(\phi) \leq J(w)$ for all $w \in \mathcal N \cap
       B_{H^1_0(\Omega)}(\phi;r_0)$.
 \item 
       There exists $r_0 > 0$ such that
       $R(\phi) \leq R(w)$ for all $w \in \mathcal N \cap
       B_{H^1_0(\Omega)}(\phi;r_0)$.
 \item 
       There exists $r_1 > 0$ such that
       $R(\phi) \leq R(w)$ for all $w \in \mathcal X \cap
       B_{H^1_0(\Omega)}(\phi;r_1)$.
\end{enumerate}
Here we note that one can take the same $r_0$ for {\rm (i)} and {\rm (ii)}.
\end{lemma}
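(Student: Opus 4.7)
The plan is to prove (i)$\Leftrightarrow$(ii) by writing $J$ as a strictly increasing function of $R$ on the Nehari manifold, and to prove (ii)$\Leftrightarrow$(iii) by a scaling argument that transports neighborhoods of $\phi$ between $\mathcal{N}$ and $\mathcal{X}$ in a way that preserves the Rayleigh quotient.

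For the equivalence (i)$\Leftrightarrow$(ii), every $w \in \mathcal N$ satisfies $\|\nabla w\|_{L^2(\Omega)}^2 = \lambda_m \|w\|_{L^m(\Omega)}^m$, so $J(w) = \frac{m-2}{2m}\|\nabla w\|_{L^2(\Omega)}^2$. Eliminating $\|w\|_{L^m(\Omega)}$ via the defining identity of $\mathcal N$ (which, divided by $\|w\|_{L^m(\Omega)}^2$, gives $R(w)^2 = \lambda_m\|w\|_{L^m(\Omega)}^{m-2}$) yields the closed form
$$
J(w) = \frac{m-2}{2m}\,\lambda_m^{-2/(m-2)}\, R(w)^{2m/(m-2)} \quad \mbox{ for all } \ w \in \mathcal N.
$$
Since $m > 2$ and $R(w) > 0$, the right-hand side is a strictly increasing function of $R(w)$; hence (i) and (ii) are equivalent with the \emph{same} radius $r_0$.

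For (ii)$\Leftrightarrow$(iii), I would introduce two continuous radial rescalings near $\phi$. For $w \in H^1_0(\Omega)\setminus\{0\}$, define
$$
t_N(w) := \left( \frac{\|\nabla w\|_{L^2(\Omega)}^2}{\lambda_m \|w\|_{L^m(\Omega)}^m} \right)^{1/(m-2)}, \qquad t_X(w) := t_*(w)^{-1/(m-2)}.
$$
The scaling law $t_*(\lambda w) = \lambda^{m-2}t_*(w)$, which follows from the invariance of FDE under $u(x,t)\mapsto \lambda u(x,\lambda^{2-m}t)$, ensures that $t_X(w)w \in \mathcal X$; similarly $t_N(w)w \in \mathcal N$. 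Both functions equal $1$ at $\phi \in \mathcal S \subset \mathcal N \cap \mathcal X$ and are continuous in a neighborhood of $\phi$, using the continuity of $t_*$ \cite[Proposition 4]{AK09} together with $\|\phi\|_{L^m(\Omega)} > 0$. Since $R(tw) = R(w)$ for every $t>0$, the maps $\Phi(w) := t_X(w)w$ and $\Psi(w) := t_N(w)w$ preserve $R$. Starting from (ii), for $w \in \mathcal X$ in a sufficiently small ball of radius $r_1$ around $\phi$, continuity ensures $\Psi(w) \in \mathcal N \cap B_{H^1_0(\Omega)}(\phi;r_0)$, whence $R(\phi) \leq R(\Psi(w)) = R(w)$, giving (iii); the reverse implication follows symmetrically using $\Phi$.

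The main obstacle is not algebraic but rather quantitative control of the scaling maps: one must verify that, once $r_0$ is fixed, one can choose $r_1 > 0$ small enough so that $\Psi$ actually maps $\mathcal X \cap B_{H^1_0(\Omega)}(\phi;r_1)$ into $\mathcal N \cap B_{H^1_0(\Omega)}(\phi;r_0)$ (and symmetrically for $\Phi$). This reduces to the continuity of $t_N$ and $t_X$ at $\phi$ together with the fact that $\|w\|_{L^m(\Omega)}$ stays bounded away from zero on a neighborhood of $\phi$; apart from this small technical check, the remainder of the argument is bookkeeping.
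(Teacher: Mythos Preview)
Your proposal is correct and follows essentially the same route as the paper: the equivalence (i)$\Leftrightarrow$(ii) is obtained from the identity $J(w) = \tfrac{m-2}{2m}\lambda_m^{-2/(m-2)}R(w)^{2m/(m-2)}$ on $\mathcal N$, and (ii)$\Leftrightarrow$(iii) via the scale-invariance of $R$ together with the continuous rescalings $w\mapsto n(w)w\in\mathcal N$ and $w\mapsto x(w)w\in\mathcal X$ (your $t_N$, $t_X$), which fix $\phi$. The paper makes the neighborhood control explicit by the triangle inequality $\|n(w)w-\phi\|_{H^1_0}\le |n(w)-1|(\|\phi\|_{H^1_0}+r_1)+r_1$ and the continuity of $n(\cdot)$, $t_*(\cdot)$ at $\phi$, which is precisely the ``small technical check'' you flag.
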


\begin{proof}
 We first note that 
$$
J(w) = \dfrac{m-2}{2m} \lambda_m^{-2/(m-2)}
 R(w)^{2m/(m-2)} \quad \mbox{ for all } \ w \in \mathcal N.
$$
Hence it is obvious that
 (i) and (ii) are equivalent with the same choice of $r_0 > 0$.
 So it remains to prove the equivalence between (ii) and (iii). 
 For each $w \in H^1_0(\Omega) \setminus \{0\}$, set positive constants
$$
x(w) := t_*(w)^{-1/(m-2)}, \quad 
n(w) := \left( \dfrac{\|w\|_{H^1_0(\Omega)}^2}{\lambda_m
 \|w\|_{L^m(\Omega)}^m} \right)^{1/(m-2)}.
$$
Then it follows that $x(w) \leq n(w)$, $x(w)w \in \mathcal X$ and $n(w)w
 \in \mathcal N$ (see~\cite[Proposition 10]{AK09}). 
 First, assume (ii). Let $w \in \mathcal X$ be such
 that $\|w - \phi\|_{H^1_0(\Omega)} < r_1$ with $r_1 > 0$
 which will be determined later. We observe that
\begin{align}
\|n(w)w - \phi\|_{H^1_0(\Omega)}
 &\leq \left| n(w) - 1 \right| \|w\|_{H^1_0(\Omega)} + \|w -
 \phi\|_{H^1_0(\Omega)} \nonumber\\
 &< \left| n(w) - 1 \right|  \left(
 \|\phi\|_{H^1_0(\Omega)} + r_1 \right) + r_1.\label{p1}
\end{align}
Since $n(\cdot)$ is continuous in $H^1_0(\Omega) \setminus \{0\}$ and
 $n(\phi) = 1$, one can take $r_1 > 0$ small enough that the right-hand
 side of \eqref{p1} is less than $r_0$. It follows that
$$
R(\phi) \stackrel{\text{(ii)}}\leq R(n(w)w) = R(w).
$$
Thus (iii) follows. Next assume (iii) and let $w \in \mathcal N$ be such
 that $\|w-\phi\|_{H^1_0(\Omega)} < r_0$ with $r_0 > 0$ to be
 determined. Then one can similarly derive
$$
\|x(w)w - \phi\|_{H^1_0(\Omega)}
 < \left| x(w) - 1 \right|  \left(
 \|\phi\|_{H^1_0(\Omega)} + r_0 \right) + r_0.
$$
So choosing $r_0 > 0$ small enough and employing the continuity of
 $t_*(\cdot)$ in $H^1_0(\Omega)$ along with $t_*(\phi) = 1$, one deduces
 that $\|x(w)w - \phi\|_{H^1_0(\Omega)} < r_1$. Consequently, (iii)
 implies $R(\phi) \leq R(x(w)w) = R(w)$, whence (ii) follows.
\end{proof}

The fact above also holds true for global minimizers (i.e., $r_0 = r_1 =
\infty$). Moreover, it is known (see Proposition \ref{P:X}) that the set of
 (global) minimizers of $J$ over $\mathcal X$ coincides with the set of
 least energy solutions, which can be also formulated as (global)
 minimizers of $J$ over $\mathcal N$ (see, e.g.,~\cite[Chap.~4]{Willem}).

\begin{proof}[Proof of Proposition \ref{P:loc_min}]
 Since $\phi$ is the (unique) minimizer of $J$ over $\mathcal N \cap
 B_{H^1_0(\Omega)}(\phi;r_0)$ by assumption, due to Lemma
 \ref{L:loc_min}, it holds that
\begin{equation}\label{p2}
R(\phi) \leq R(w) \quad \mbox{ for all } \ w \in \mathcal X \cap
 B_{H^1_0(\Omega)}(\phi;r_1) 
\end{equation}
for some $r_1 > 0$. 
 Suppose on the contrary that $\phi$ is not a local minimizer of $J$
 over $\mathcal X$; then for each $n \in \mathbb N$ we can take
 $v_{0,n} \in \mathcal X \cap B_{H^1_0(\Omega)}(\phi;1/n)$ such that
$$
J(v_{0,n}) < J(\phi).
$$
Let $v_n = v_n(x,s)$ be the solution of \eqref{eq:1.6}--\eqref{eq:1.8}
 with the initial data $v_n(0) = v_{0,n}$. Then one observes that
$$
R(v_n(s)) \leq R(v_{0,n}), \quad J(v_n(s)) \leq J(v_{0,n}) < J(\phi)
\quad \mbox{ for all } \ s \geq 0.
$$
Since $v_{0,n}$ belongs to $\mathcal X$, there is $\psi_n\in \mathcal S
 \subset \mathcal N$
 such that $v_n(s) \to \psi_n$ along a subsequence of $s \to \infty$. Thus
 we see that $J(\psi_n) < J(\phi)$, which implies $\psi_n \not\in
 B_{H^1_0(\Omega)}(\phi;r_0)$ by assumption.
 Moreover, it follows that $R(\psi_n) \leq R(v_{0,n})$. 

 Now, let us take $s_n > 0$ such that 
 $$
 \|v_n(s)-\phi\|_{H^1_0(\Omega)} < \vep \quad \mbox{ for all } \
 s \in [0,s_n), \quad \mbox{ and } \quad
 \|v_n(s_n)-\phi\|_{H^1_0(\Omega)} = \vep
$$
with $\vep \in (0, r_0 \wedge r_1)$ which will be determined later \III (cf.~see \eqref{ass_cont}). \EEE
Then since $\mathcal X$ is sequentially closed in the weak topology of
 $H^1_0(\Omega)$ (see Proposition \ref{P:X}), there exists $z \in
 \mathcal X \cap B_{H^1_0(\Omega)}(\phi;r_1)$ such that
$$
 v_n(s_n) \to z \quad \mbox{ weakly in } H^1_0(\Omega)
 \ \mbox{ and \ strongly in } L^m(\Omega).
$$
Therefore by Lemma \ref{L:infR}, we deduce that
$$
\liminf_{n \to \infty} R(v_n(s_n))
=\liminf_{n \to \infty} 
\dfrac{\|\nabla v_n(s_n)\|_{L^2(\Omega)}}{\|v_n(s_n)\|_{L^m(\Omega)}}
\geq \dfrac{\|\nabla z\|_{L^2(\Omega)}}{\|z\|_{L^m(\Omega)}}
= R(z) \stackrel{\eqref{p2}}\geq R(\phi).
$$
On the other hand, recalling $R(v_n(s_n)) \leq R(v_{0,n})$, one has
$$
\limsup_{n \to \infty} R(v_n(s_n)) \leq \lim_{n \to \infty} R(v_{0,n}) =
 R(\phi).
$$
Combining these facts, we obtain
$$
R(v_n(s_n)) \to R(\phi) \quad \mbox{ and } \quad R(z) = R(\phi).
$$

Therefore we see that
\begin{align*}
 \|\nabla v_n(s_n)\|_{L^2(\Omega)}
 &= R(v_n(s_n)) \|v_n(s_n)\|_{L^m(\Omega)}\\
 &\to R(\phi) \|z\|_{L^m(\Omega)} 
 = R(z) \|z\|_{L^m(\Omega)} = \|\nabla z\|_{L^2(\Omega)},
\end{align*}
which along with the uniform convexity of $H^1_0(\Omega)$ implies
$$
v_n(s_n) \to z \quad \mbox{ strongly in } H^1_0(\Omega).
$$
Thus we get
$$
\|z - \phi\|_{H^1_0(\Omega)} = \vep.
$$

Now, $n(z)z$ belongs to $\mathcal N$. We here claim that
\begin{equation}\label{lm:claim}
 0 \neq \|n(z)z - \phi\|_{H^1_0(\Omega)} < r_0
\end{equation}
for sufficiently small $\vep > 0$.
Indeed, repeating the same argument as in the proof of Lemma
 \ref{L:loc_min}, we find that $ \|n(z)z - \phi\|_{H^1_0(\Omega)} < r_0$
for $\vep > 0$ small enough. On the other hand, recall that $z \neq
 \phi$ and $z, \phi \in \mathcal X$. The ray from the origin through $w
 \in H^1_0(\Omega) \setminus \{0\}$, i.e., $\{ k w \in H^1_0(\Omega)
 \colon k > 0 \}$, intersects $\mathcal X$ (resp., $\mathcal N$) only at
 the single point $x(w)w$ (resp., $n(w)w$) (see~\cite[Proposition
 10]{AK09}); therefore $z$ and $\phi$ do not lie on the same ray from
 the origin. Hence one observes that
$$
n(z)z \neq n(\phi)\phi = \phi.
$$
Thus we obtain \eqref{lm:claim}. Recall $R(n(z)z) = R(z) = R(\phi)$ and
 note that
$$
J(n(z)z) 
= \frac{m-2}{2m} \lambda_m^{-2/(m-2)} R(n(z)z)^{2m/(m-2)}
= \frac{m-2}{2m} \lambda_m^{-2/(m-2)} R(\phi)^{2m/(m-2)}
= J(\phi).
$$
However, these facts yield a contradiction to the
 assumption \eqref{lm:hypo}. The proof is completed.
\end{proof}

The inverse relation can be easily proved without imposing any additional
assumption.

\begin{proposition}
 Let $\phi$ satisfy \eqref{locmin}. Then $\phi$ locally minimizes $J$
 over $\mathcal N$.
\end{proposition}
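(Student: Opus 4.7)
The plan is to exploit the ray structure connecting $\mathcal X$ and $\mathcal N$ that was already used in the proof of Proposition \ref{P:loc_min}. Recall that every nonzero ray $\{k w : k > 0\}$ meets $\mathcal X$ only at $x(w) w$ and $\mathcal N$ only at $n(w) w$, with $x(w) \leq n(w)$. By Proposition \ref{P:LM}, the assumption \eqref{locmin} forces $\phi$ to be a sign-definite solution of \eqref{eq:1.10}, \eqref{eq:1.11}, and hence $\phi \in \mathcal S \subset \mathcal X \cap \mathcal N$, so in particular $x(\phi) = n(\phi) = 1$.

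Given $w \in \mathcal N$ in a small $H^1_0(\Omega)$-neighborhood of $\phi$, I would first project it radially onto $\mathcal X$ by setting $\tilde w := x(w) w \in \mathcal X$. The continuity of $t_*$ on $H^1_0(\Omega) \setminus \{0\}$ (see~\cite[Proposition 4]{AK09}) together with $t_*(\phi) = 1$ ensures that the map $w \mapsto x(w) w$ is continuous at $\phi$ with value $\phi$; hence, shrinking the neighborhood if necessary, one has
\[
\tilde w \in \mathcal X \cap B_{H^1_0(\Omega)}(\phi ; r_0),
\]
so that by \eqref{locmin} we obtain $J(\tilde w) \geq J(\phi)$.

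To close the argument, I would show that $J(w) \geq J(\tilde w)$ whenever $w \in \mathcal N$ via the standard ray-maximum property of the Nehari manifold. A direct computation gives
\[
J(k w) = \frac{k^2}{2} \|\nabla w\|_{L^2(\Omega)}^2 - \frac{k^m \lambda_m}{m} \|w\|_{L^m(\Omega)}^m, \qquad k > 0,
\]
whose derivative in $k$ vanishes uniquely at $k = n(w)$, with second derivative $-(m-2)\|\nabla w\|_{L^2(\Omega)}^2 < 0$ there; hence $k = n(w)$ is the global maximum on $(0,\infty)$. Since $w \in \mathcal N$ means $n(w) = 1$, it follows that $J(w) \geq J(k w)$ for every $k > 0$; specializing to $k = x(w)$ yields $J(w) \geq J(\tilde w) \geq J(\phi)$, which is the desired local minimality over $\mathcal N$.

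Both ingredients — the continuity of $x(\cdot)$ near $\phi$ and the ray-maximum characterization of $\mathcal N$ — are routine and already present in the paper and in~\cite{AK09}. Consequently I do not expect a genuine obstacle; the only point that needs mild care is ensuring that the radial projection $w \mapsto x(w) w$ maps a small $H^1_0(\Omega)$-ball around $\phi$ into $\mathcal X \cap B_{H^1_0(\Omega)}(\phi ; r_0)$, which follows from $t_*(\phi) = 1 > 0$ together with the continuity of $t_*$.
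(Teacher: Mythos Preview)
Your proof is correct and follows essentially the same route as the paper: project $w \in \mathcal N$ radially to $x(w)w \in \mathcal X$, use continuity of $t_*$ near $\phi$ to land inside $B_{H^1_0(\Omega)}(\phi;r_0)$, apply \eqref{locmin}, and finish with the Nehari ray-maximum property $J(w) = \sup_{c>0} J(cw)$. The paper merely states the last step by citing this property, whereas you spell out the one-variable computation, but the argument is the same.
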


\begin{proof}
Assume that $\phi$ satisfies \eqref{locmin}. 
As in the proof of Lemma \ref{L:loc_min}, one can choose
 $\delta > 0$ small enough that $x(w)w \in \mathcal X \cap
 B_{H^1_0(\Omega)}(\phi ; r_0)$ for all $w \in \mathcal N \cap
 B_{H^1_0(\Omega)}(\phi;\delta)$. Then by assumption, we obtain
$$
J(\phi) \leq J(x(w)w) \quad \mbox{ for all } \ 
w \in \mathcal N \cap B_{H^1_0(\Omega)}(\phi;\delta).
$$
On the other hand, by the definition of $\mathcal N$, it holds that
 $J(w) = \sup_{c > 0} J(cw)$ for each $w \in \mathcal
 N$. Hence it follows that
$$
J(\phi) \leq J(x(w)w) \leq J(w) \quad \mbox{ for all } \ w \in \mathcal N \cap
 B_{H^1_0(\Omega)}(\phi ; \delta).
$$
Thus we conclude that $\phi$ is a local minimizer of $J$ over $\mathcal N$.
\end{proof}

\section{Instability of positive radial profiles in thin annular domains}

In the final section, we shall apply the \L ojasiewicz-Simon inequality to prove the instability of sign-definite asymptotic profiles which do not attain local minima of $J$ over $\mathcal X$. Then one can prove Theorem \ref{T:inst}, that is, the instability of the positive radial asymptotic profile (equivalently, the positive radial solution of \eqref{eq:1.10}, \eqref{eq:1.11}) in the annular domain
$$
\Omega = \left\{
x \in \mathbb R^N \colon a < |x| < b
\right\}
$$ 
with $0 < a < b < \infty$ satisfying \eqref{hypo:inst}, as a corollary.

\begin{theorem}[Instability of sign-definite profiles except for local minimizers of $J$ over $\mathcal X$]\label{T:inst-nlm}
 Let $\phi$ be a positive {\rm (}or negative{\rm )} solution of \eqref{eq:1.10}, \eqref{eq:1.11} which does not attain any local minimum of $J$ over $\mathcal X$. Then $\phi$ is an unstable asymptotic profile for FDE {\rm (}in the sense of Definition \ref{D:stbl}{\rm )}.
\end{theorem}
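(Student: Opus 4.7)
The plan is to argue by contradiction using the \L ojasiewicz--Simon inequality in a ``downhill'' form: a trajectory whose energy lies strictly below $J(\phi)$ must dissipate energy at a uniform positive rate, which forces $J(v(s))\to-\infty$ and contradicts the fact that $J$ is bounded from below on $H^1_0(\Omega)$-balls.

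Suppose, for contradiction, that $\phi$ is stable. Since $\phi$ is positive (or negative), Lemma \ref{L:LS} furnishes constants $\theta\in(0,1/2]$, $\omega,\delta_0>0$ corresponding to some $L>L_\phi$. Pick $\vep\in(0,\delta_0)$ and, by the supposed stability, $\delta\in(0,\vep)$ such that every solution $v$ emanating from $v_0\in\mathcal X\cap B_{H^1_0(\Omega)}(\phi;\delta)$ satisfies $\|v(s)-\phi\|_{H^1_0(\Omega)}<\vep$ for all $s\ge 0$. Fix $s_0\in(0,\log 2)$; Lemmas \ref{L:infR} and \ref{L:e:v-i} provide the uniform bound $\|v(s)\|_{L^\infty(\Omega)}\le L$ for every $s\ge s_0$ (after enlarging $L$ if needed, which only updates the \L ojasiewicz--Simon constants). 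Because $\phi$ does not attain a local minimum of $J$ over $\mathcal X$, one can choose $v_0\in\mathcal X\cap B_{H^1_0(\Omega)}(\phi;\delta)$ with $\eta_0:=J(\phi)-J(v_0)>0$, and the monotonicity of $s\mapsto J(v(s))$ guarantees $J(\phi)-J(v(s))\ge\eta_0$ for every $s\ge 0$.

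The core estimate is a uniform lower bound on dissipation. For $s\ge s_0$ both admissibility conditions of Lemma \ref{L:LS} hold for $v(s)$, whence
$$
\|J'(v(s))\|_{H^{-1}(\Omega)}\ge\omega^{-1}\bigl(J(\phi)-J(v(s))\bigr)^{1-\theta}\ge\omega^{-1}\eta_0^{1-\theta}.
$$
Combining the energy inequality \eqref{eq:3.2}, Lemma \ref{L:1}, the embedding $L^2(\Omega)\hookrightarrow H^{-1}(\Omega)$ with constant $C_2$, and the gradient-flow identity \eqref{ggf} $\partial_s(\vm)=-J'(v(s))$, one obtains for a.e.\ $s\ge s_0$
$$
\frac{\d}{\d s}J(v(s))\le-\frac{\mu_m}{\kappa_m L^{m-2}C_2^2}\,\|J'(v(s))\|_{H^{-1}(\Omega)}^{2}\le-\frac{\mu_m}{\kappa_m L^{m-2}C_2^2\omega^2}\,\eta_0^{2(1-\theta)}=:-c(\eta_0)<0.
$$
Integrating gives $J(v(s))\le J(v(s_0))-c(\eta_0)(s-s_0)\to-\infty$ as $s\to\infty$. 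On the other hand, $v(s)\in B_{H^1_0(\Omega)}(\phi;\vep)$ is uniformly bounded in $H^1_0(\Omega)$, so \eqref{eq:1.12} yields
$$
J(v(s))\ge-\frac{\lambda_m}{m}C_m^m\bigl(\|\phi\|_{H^1_0(\Omega)}+\vep\bigr)^m>-\infty,
$$
a contradiction. Hence $\phi$ is unstable.

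The main obstacle is coordinating the two admissibility requirements of the \L ojasiewicz--Simon inequality along a full trajectory in the flow on $\mathcal X$. The $H^1_0(\Omega)$-proximity to $\phi$ is supplied (tentatively) by the stability hypothesis, while the $L^\infty(\Omega)$ bound comes from the uniform extinction estimate of Lemma \ref{L:e:v-i}, which is only effective from time $s_0>0$ onwards. This initial transient is harmless because the monotonicity of $J(v(\cdot))$ preserves the energy gap $\eta_0$ up to $s_0$, so the \L ojasiewicz--Simon-driven decay can be safely started at $s=s_0$.
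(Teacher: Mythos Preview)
Your proof is correct, but it follows a genuinely different route from the paper's. A minor expository point: the order in which you select constants should be reversed---first fix $s_0$ and an auxiliary radius (say $1$), extract the uniform $L^\infty$ bound $L$ for trajectories starting in $B_{H^1_0(\Omega)}(\phi;1)\cap\mathcal X$ via Lemmas~\ref{L:infR} and~\ref{L:e:v-i}, \emph{then} invoke Lemma~\ref{L:LS} for that $L$ to get $\theta,\omega,\delta_0$, and only afterwards pick $\vep<\min(\delta_0,1)$ and $\delta$ from stability. Your parenthetical remark shows you are aware of this; once written out in the right order there is no circularity.

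The paper proceeds differently. It does not assume stability and instead applies the \L ojasiewicz--Simon inequality only at \emph{critical points}: starting from data $v_{0,n}\in\mathcal X$ with $v_{0,n}\to\phi$ and $J(v_{0,n})<J(\phi)$, it passes to $\omega$-limit points $\psi_n\in\mathcal S$ along each trajectory (Proposition~\ref{P:X}(ii)), notes $J(\psi_n)<J(\phi)$, and then argues that $\psi_n\to\phi$ in $H^1_0(\Omega)$ is impossible because elliptic regularity would upgrade this to $L^\infty$-convergence, whence Lemma~\ref{L:LS} with $J'(\psi_n)=0$ would force $J(\psi_n)=J(\phi)$. Hence a subsequence of $\psi_n$ stays uniformly away from $\phi$, and the corresponding trajectories must exit a fixed ball, which is the instability statement directly. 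Your argument, by contrast, uses the \L ojasiewicz--Simon inequality \emph{dynamically} along the full trajectory to extract a uniform positive dissipation rate, combining it with the chain \eqref{eq:3.2}--Lemma~\ref{L:1}--\eqref{ggf} exactly as in Section~\ref{S:M}. The paper's route avoids this chain and gets away with a single ``static'' application at the limit, but it needs the $\omega$-limit structure and an elliptic regularity step; your route avoids both of these at the price of importing more of the Section~\ref{S:M} machinery. Both are clean; yours is arguably closer in spirit to the way the inequality is used in the stability proof.
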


\begin{proof}
 Let $\phi$ be a positive (or negative) solution of \eqref{eq:1.10}, \eqref{eq:1.11} such that $\phi$ does not attain any local minimum of $J$ over $\mathcal X$, that is, there exists a sequence $(v_{0,n})$ in $\mathcal X$ such that $J(v_{0,n}) < J(\phi)$ and $v_{0,n} \to \phi$ strongly in $H^1_0(\Omega)$. Then by strong maximum principle and elliptic regularity, $\phi$ also satisfies \eqref{smp}; therefore the \L ojasiewicz-Simon inequality, i.e., Lemma \ref{L:LS}, is valid for $\phi$ as well. Since $v_{0,n}$ lies on $\mathcal X$, one can take a nontrivial solution $\psi_n$ of \eqref{eq:1.10}, \eqref{eq:1.11} such that the solution $v_n$ of \eqref{eq:1.6}--\eqref{eq:1.8} with $v_0 = v_{0,n}$ converges to $\psi_n$ strongly in $H^1_0(\Omega)$ along a subsequence of $s \to \infty$. From the non-increase of the energy, one has $J(\psi_n) < J(\phi)$. Now, suppose that $\psi_n$ converges to $\phi$ strongly in $H^1_0(\Omega)$ as $n \to \infty$. Then by utilizing elliptic regularity technique, one can check that $\psi_n$ converges to $\phi$ in $C^2(\overline\Omega)$; in particular, $\|\psi_n\|_{L^\infty(\Omega)} \leq \|\phi\|_{L^\infty(\Omega)} + 1 =: L$ for $n > 0$ large enough. Hence thanks to Lemma \ref{L:LS}, since $J'(\psi_n) = 0$, for sufficiently large $n$, $\psi_n$ must take the same critical value as $\phi$, that is, $J(\psi_n) = J(\phi)$. However, it is a contradiction to the difference of the energy. Therefore $(\psi_n)$ does not converge to $\phi$ in $H^1_0(\Omega)$ as $n \to \infty$.

Hence one can take $\delta_1 > 0$ and a subsequence $(n_k)$ of $(n)$ such that $\|\psi_{n_k} - \phi\|_{H^1_0(\Omega)} \geq \delta_1$ for all $k \in \mathbb N$. Therefore for each $k \in \mathbb N$, the solution $v_{n_k}(s)$ of \eqref{eq:1.6}--\eqref{eq:1.8} for the initial data $v_{0,n_k}$ must go away from the neighborhood $B_{H^1_0(\Omega)}(\phi;\delta_1/2)$ for $s > 0$ sufficiently large. On the other hand, $v_{0,n_k} \to \phi$ strongly in $H^1_0(\Omega)$ as $k \to \infty$. Thus we have proved the instability of $\phi$.
\end{proof}

Theorem \ref{T:inst} follows from the theorem stated above.

\begin{proof}[Proof of Theorem \ref{T:inst}]
Let $\phi$ be the positive radial solution of \eqref{eq:1.10}, \eqref{eq:1.11}. Then as in~\cite{AK12}, for $\vep > 0$ small enough, one can explicitly construct $v_{0,\vep} \in \mathcal X$ such that
$$
J(v_{0,\vep}) < J(\phi) \quad \mbox{ and } \quad
v_{0,\vep} \to \phi \ \mbox{ strongly in } H^1_0(\Omega)
\ \mbox{ as } \ \vep \to 0
$$
under the assumption \eqref{hypo:inst}. This fact yields that $\phi$ is not a local minimizer of $J$ over $\mathcal X$; thus the instability of $\phi$ follows from Theorem \ref{T:inst-nlm}.
\end{proof}

\section*{Acknowledgments}

The author is supported by the Carl Friedrich von Siemens Stiftung and the Alexander von Humboldt Stiftung through a Research Fellowship for Experienced Researchers. Moreover, he is also supported by JSPS KAKENHI Grant Number 25400163 and by the JSPS-CNR bilateral joint research project: \emph{Innovative Variational Methods for Evolution Equations}.

 \III 
\appendix

\section{Derivation of \eqref{apdx1} and \eqref{apdx2}}

This appendix is devoted to verifying \eqref{apdx1} and \eqref{apdx2}. Formally test \eqref{eq:1.1} by $\partial_t u$ to obtain
$$
\dfrac 1 2 \dfrac{\d}{\d t} \|u(t)\|_{H^1_0(\Omega)}^2 \leq 0 \quad \mbox{ for a.e. } \ t > 0
$$
(it can be justified as in~\cite{G:EnSol}). Then
$$
\sup_{t \geq 0} \|u(t)\|_{H^1_0(\Omega)} \leq \|u_0\|_{H^1_0(\Omega)}.
$$
By using Tartar's inequality along with the fact that $|u|^{m-2}u \in C([0,T];H^{-1}(\Omega))$, as in \eqref{tartar}, we see that
$$
u \in C([0,T];L^m(\Omega)).
$$
Moreover, recalling Lemma 8.1 of~\cite{LM}, we deduce that
$$
u \in C_w([0,T];H^1_0(\Omega)).
$$
Hence it follows that, for each $s \in [0,T]$,
$$
\|u(s)\|_{H^1_0(\Omega)} \leq \liminf_{t \to s} \|u(t)\|_{H^1_0(\Omega)}.
$$
Since $t \mapsto \|u(t)\|_{H^1_0(\Omega)}$ is non-increasing, we have
$$
\lim_{t \searrow s} \|u(t)\|_{H^1_0(\Omega)} = \|u(s)\|_{H^1_0(\Omega)}.
$$
Thus by the uniform convexity of $H^1_0(\Omega)$,
$$
u(t) \to u(s) \quad \mbox{ strongly in } H^1_0(\Omega) \ \mbox{ as } \ t \searrow s,
$$
which implies $u \in C_+([0,T];H^1_0(\Omega))$. Finally, by comparison of both sides of \eqref{eq:1.1}, since $- \Delta : H^1_0(\Omega) \to H^{-1}(\Omega)$ is an isomorphism, $\partial_t (|u|^{m-2}u)$ belongs to $C_+([0,T];H^{-1}(\Omega))$.

 \EEE 

\pagestyle{myheadings}


\begin{thebibliography}{99}
\bibitem{AK09} Akagi, G.~and Kajikiya, R.,
	\newblock{Stability analysis of asymptotic profiles for
	sign-changing solutions to fast diffusion equations},
	\newblock{Manuscripta Mathematica} {\bf 141} (2013), 559--587.
\bibitem{AK12} Akagi, G.~and Kajikiya, R.,
	\newblock{Symmetry and stability of asymptotic profiles for fast
	diffusion equations in annuli}, 
	\newblock{Annales de l'Institut Henri Poincar\'e (C) Analyse Non
	Lin\'eaire} {\bf 31} (2014), 1155--1173.
\bibitem{INdAM13} Akagi, G.,
	\newblock{Stability and instability of group invariant
	asymptotic profiles for fast diffusion equations},
	\newblock{``Geometric Properties for Parabolic and Elliptic
	PDE's'' (R. Magnanini, S. Sakaguchi, A. Alvino Eds.)}, 
	\newblock{Springer INdAM Series}, Springer, 2013, pp.1--16.
\bibitem{G:EnSol} Akagi, G.,
	\newblock{Energy solutions of the Cauchy-Neumann problem for
	porous medium equations},
	\newblock{``The Seventh International Conference on Dynamical
	Systems and Differential Equations''}, 
	\newblock{a supplement volume of Discrete and Continuous
	Dynamical Systems}, 2009, pp.1--10.
\bibitem{BH78} Berryman, J.G. and Holland, C.J.,
	\newblock{Nonlinear diffusion problem arising in plasma
	physics},
	\newblock{Phys.~Rev.~Lett.} {\bf 40} (1978), 1720--1722.
\bibitem{BH} Berryman, J.G. and Holland, C.J.,
        \newblock{Stability of the separable solution for fast diffusion},
        \newblock{Arch.~Rational Mech.~Anal.} {\bf 74} (1980), 379--388.
\bibitem{BH82} Berryman, J.G. and Holland, C.J.,
	\newblock{Asymptotic behavior of the nonlinear diffusion
	equation $n_{t}=(n^{-1}n_{x})_{x}$},
	\newblock{J.~Math.~Phys.} {\bf 23} (1982), 983--987.
\bibitem{BC} B\'enilan, P. and Crandall, M.G.,
	\newblock{The continuous dependence on $\varphi $ of solutions
	of $u_{t}-\Delta \varphi (u)=0$},
	\newblock{Indiana Univ.~Math.~J.} {\bf 30} (1981), 161--177.
 \bibitem{BBDGV09} Blanchet, A., Bonforte, M., Dolbeault, J., Grillo,
	 G.~and V\'azquez, J.L.,
	 \newblock{Asymptotics of the fast diffusion equation via
	 entropy estimates},
	 \newblock{Arch.~Ration.~Mech.~Anal.} {\bf 191} (2009), 347--385.
 \bibitem{BDGV10} Bonforte, M., Dolbeault, J., Grillo, G.~and V\'azquez, J.L.,
	 \newblock{Sharp rates of decay of solutions to the nonlinear
	 fast diffusion equation via functional inequalities},
	 \newblock{Proc.~Natl.~Acad.~Sci.~USA} {\bf 107} (2010),
	 16459--16464.
 \bibitem{BGV11} Bonforte, M., Grillo, G.~and V\'azquez, J.L.,
	 \newblock{Behaviour near extinction for the Fast Diffusion
	 Equation on bounded domains},
	 \newblock{J.~Math.~Pures Appl.} {\bf 97} (2012), 1--38.
 \bibitem{BGV08} Bonforte, M., Grillo, G.~and V\'azquez, J.L.,
	 \newblock{Fast diffusion flow on manifolds of nonpositive
	 curvature},
	 \newblock{J.~Evol.~Equ.} {\bf 8} (2008), 99--128.
 \bibitem{BV10} Bonforte, M.~and V\'azquez, J.L.,
	 \newblock{Positivity, local smoothing, and Harnack inequalities
	 for very fast diffusion equations},
	 \newblock{Adv.~Math.} {\bf 223} (2010), 529--578.
 \bibitem{HB3} Br\'ezis, H.,
	 \newblock {\it Monotonicity methods in Hilbert spaces and some
	 applications to non-linear partial differential equations},
	 Contributions to Nonlinear Functional Analysis,
	 ed. Zarantonello, E.,  
	 \newblock {Academic Press, New York-London}, 1971,
	 pp.101--156.
\bibitem{Byeon} Byeon, J.,
	\newblock{Existence of many nonequivalent nonradial positive
        solutions of semilinear elliptic equations
        on three-dimensional annuli},
	\newblock{J.~Differential Equations} {\bf 136} (1997), 136--165.
\bibitem{Chill03} Chill, R.,
	\newblock{On the \L ojasiewicz-Simon gradient inequality},
	\newblock{J.~Funct.~Anal.} {\bf 201} (2003), 572--601.
\bibitem{CHJ09}  Chill, R., Haraux, A.~and Jendoubi, M.A.,
	\newblock{Applications of the \L ojasiewicz-Simon gradient
	inequality to gradient-like evolution equations}, 
	\newblock{Anal.~Appl.} {\bf 7} (2009), 351--372.
 \bibitem{Coffman} Coffman, C.V., 
         \newblock {A nonlinear boundary value problem with
         many positive solutions},
	 \newblock {J.~Differential Equations} {\bf 54} (1984), 429--437.
\bibitem{Diaz88} D\'iaz, G. and Diaz, I.,
	\newblock{Finite extinction time for a class of nonlinear
	parabolic equations},
	\newblock{Comm.~Partial Differential Equations} {\bf 4} (1979),
	1213--1231.
\bibitem{DKV91} DiBenedetto, E., Kwong, Y.C. and Vespri, V.,
	\newblock{Local space-analyticity of solutions of certain
	singular parabolic equations},
	\newblock{Indiana Univ.~Math.~J.} {\bf 40} (1991), 741--765.
\bibitem{DK92} DiBenedetto, E. and Kwong, Y.C.,
	\newblock{Harnack estimates and extinction profile for weak solutions of certain singular parabolic equations},
	\newblock{Trans.~Amer.~Math.~Soc.} {\bf 330} (1992), 783--811.
\bibitem{FeiSim00} Feireisl, E.~and Simondon, F.,
	\newblock{Convergence for semilinear degenerate parabolic
	equations in several space dimension},
	\newblock{J.~Dynam.~Differential Equations} {\bf 12}
	(2000), 647--673.
\bibitem{Har00} Haraux, A.,
	\newblock{Decay rate of the range component of solutions to some
	semilinear evolution equations},
	\newblock{NoDEA Nonlinear Differential Equations Appl.} {\bf 13}
	(2006), 435--445.
\bibitem{HJ11}  Haraux, A.~and Jendoubi, M.A.,
	\newblock{The \L ojasiewicz gradient inequality in the
	infinite-dimensional Hilbert space framework},
	\newblock{J.~Funct.~Anal.} {\bf 260} (2011), 2826--2842.
\bibitem{HJ01} Haraux, A.~and Jendoubi, M.A.,
	\newblock{Decay estimates to equilibrium for some evolution
	equations with an analytic nonlinearity},
	\newblock{Asymptot.~Anal.} {\bf 26} (2001), 21--36.
\bibitem{HJ98} Haraux, A.~and Jendoubi, M.A.,
	\newblock{Convergence of solutions of second-order gradient-like
	systems with analytic nonlinearities},
	\newblock{J.~Differential Equations} {\bf 144} (1998), 313--320.
\bibitem{HJK03} Haraux, A., Jendoubi, M.A.~and Kavian, O.,
	\newblock{Rate of decay to equilibrium in some semilinear
	parabolic equations},
	\newblock{J.~Evol.~Equ.} {\bf 3} (2003), 463--484.
\bibitem{HerreroVazquez88} Herrero, M.A. and V\'azquez, J.L.,
	\newblock{Asymptotic behaviour of the solutions of a strongly
	nonlinear parabolic problem},
	\newblock{Ann.~Fac.~Sci.~Toulouse Math. (5)} {\bf 3} (1981),
	113--127.
\bibitem{Jen98} Jendoubi, M.A.,
	\newblock{A simple unified approach to some convergence theorems
	of L.~Simon},
	\newblock{J.~Funct.~Anal.} {\bf 153} (1998), 187--202. 
\bibitem{Kwong88-3} Kwong, Y.C.,
	 \newblock{Asymptotic behavior of a plasma type equation with finite
	 extinction}, 
	 \newblock{Arch.~Rational Mech.~Anal.} {\bf 104} (1988), 277--294.
 \bibitem{Li} Li, Y.Y., 
          \newblock{Existence of many positive solutions of
          semilinear elliptic equations in annulus},
          \newblock{J.~Differential Equations} {\bf 83} (1990), 348--367.
\bibitem{LM}  
	Lions, J.-L.~and Magenes, E.,
	\newblock{\it Non-homogeneous boundary value problems and
	applications. Vol.\,I.},
	\newblock{Translated from the French by P. Kenneth},
	\newblock{Die Grundlehren der mathematischen Wissenschaften} 
	{\bf 181}, Springer-Verlag, New York--Heidelberg, 1972.
\bibitem{Okuda-Dawson} Okuda, H.~and Dawson, J.M.,
	\newblock{Theory and numerical simulation on plasma diffusion
	across a magnetic field},
	\newblock{Physics of Fluids} {\bf 16} (1973), 408--426.
\bibitem{Rabinowitz} Rabinowitz, P.H.,
	\newblock{\it Minimax methods in critical point theory with
	applications to differential equations},
	\newblock{CBMS Regional Conference Series in Mathematics}, 65,
	\newblock{Published for the Conference Board of the Mathematical
	Sciences}, Washington, DC, the American Mathematical Society,
	Providence, RI, 1986.
\bibitem{Sabinina62} Sabinina, E.S.,
	\newblock{On a class of non-linear degenerate parabolic
	equations},
	\newblock{Dokl.~Akad.~Nauk SSSR} {\bf 143} (1962), 794--797.
\bibitem{SavareVespri} Savar\'e, G. and Vespri, V.,
	\newblock{The asymptotic profile of solutions of a class of
	doubly nonlinear equations},
	\newblock{Nonlinear Anal.} {\bf 22} (1994), 1553--1565.
\bibitem{Simon83} Simon, L.,
	\newblock{Asymptotics for a class of nonlinear evolution
	equations, with applications to geometric problems},
	\newblock{Ann.~of Math.~(2)} {\bf 118} (1983), 525--571.
\bibitem{Vaz06} V\'azquez, J.L.,
	 \newblock{\em Smoothing and decay estimates for nonlinear
         diffusion equations. Equations of porous medium type},
	 \newblock{Oxford Lecture Series in Mathematics and its
	 Applications, 33. Oxford University Press, Oxford}, 2006.
\bibitem{Vazquez} V\'azquez, J.L.,
	\newblock{\em The porous medium equation. Mathematical theory},
	\newblock{Oxford Mathematical Monographs},
	\newblock{The Clarendon Press, Oxford University Press}, Oxford, 2007.
\bibitem{Willem} Willem, M.,
	\newblock{\em Minimax theorems},
	\newblock{Progress in Nonlinear Differential Equations and their
	Applications}, vol.24. Birkh\"auser Boston, Inc., Boston, MA,
	1996.
\end{thebibliography}
\end{document}